\newcommand{\tp}{{\scriptscriptstyle\mathsf{T}}}
\newcommand{\s}{{\scriptscriptstyle\operatorname{S}}}
\newcommand{\p}{{\scriptscriptstyle\operatorname{P}}}
\newcommand{\w}{{\scriptscriptstyle\Lambda}}
\let\oldlhd\unlhd
\renewcommand*{\unlhd}{\mathrel{\mskip0.1mu \oldlhd \mskip0.1mu}}
\newcommand{\smallprod}{{\textstyle\prod}}
\newcommand{\medotimes}{{\textstyle\bigotimes}}
\theoremstyle{plain}
\newtheorem{theorem}{Theorem}[section]
\newtheorem{proposition}[theorem]{Proposition}
\newtheorem{lemma}[theorem]{Lemma}
\newtheorem{corollary}[theorem]{Corollary}
\newtheorem{remark}[theorem]{Remark}
\theoremstyle{definition}
\newtheorem{definition}[theorem]{Definition}
\DeclareMathOperator{\GL}{GL}
\DeclareMathOperator{\im}{Im}
\DeclareMathOperator{\pr}{PR}
\DeclareMathOperator{\Ker}{Ker}
\DeclareMathOperator{\AR}{AR}
\DeclareMathOperator{\GR}{GR}
\DeclareMathOperator{\Q}{Q}
\DeclareMathOperator{\codim}{codim}
\DeclareMathOperator{\spa}{span}
\DeclareMathOperator{\Hom}{Hom}
\DeclareMathOperator{\PR}{PR}
\DeclareMathOperator{\Gr}{Gr}
\DeclareMathOperator{\height}{ht}
\DeclareMathOperator{\Ass}{Ass}
\DeclareMathOperator{\rank}{rank}
\DeclareMathOperator{\ch}{char}
\DeclareMathOperator{\Alt}{Alt}
\DeclareMathOperator{\Sym}{Sym}
\DeclareMathOperator{\Frac}{Frac}
\DeclareMathOperator{\Irr}{Irr}
\DeclareMathOperator{\cp-rank}{cp-rank}
\DeclareMathOperator{\poly}{poly}
\DeclareMathOperator{\PSPACE}{PSPACE}
\DeclareMathOperator{\AM}{AM}
\begin{document}
\title{Isotropy and Completeness Indices of Multilinear Maps}
\date{}
\author[Q-Y.~Chen]{Qiyuan~Chen}
\address{State Key Laboratory of Mathematical Sciences, Academy of Mathematics and Systems Science, Chinese Academy of Sciences, Beijing 100190, China}
\email{chenqiyuan@amss.ac.cn}
\author[K.~Ye]{Ke Ye}
\address{State Key Laboratory of Mathematical Sciences, Academy of Mathematics and Systems Science, Chinese Academy of Sciences, Beijing 100190, China}
\email{keyk@amss.ac.cn}
\begin{abstract}
Structures of multilinear maps are characterized by invariants. In this paper we introduce two invariants, named the isotropy index and the completeness index. These invariants capture the tensorial structure of the kernel of a multilinear map.  We establish bounds on both indices in terms of the partition rank, geometric rank, analytic rank and height,  and present three applications: 1) Using the completeness index as an interpolator, we establish upper bounds on the aforementioned tensor ranks in terms of the subrank. This settles an open problem raised by Kopparty, Moshkovitz and Zuiddam, and consequently answers a question of Derksen, Makam and Zuiddam.  2) We prove a Ramsey-type theorem for the two indices, generalizing a recent result of Qiao and confirming a conjecture of his.  3) By computing the completeness index, we obtain a polynomial-time probabilistic algorithm to estimate the height of a polynomial ideal.
\end{abstract}
\maketitle

\section{Introduction}
Multilinear maps are natural generalizations of linear maps and play a prominent role across diverse branches of mathematics by virtue of a variety of rich structures.  Various invariants have been introduced to characterize these structures from different perspectives.  For instance,  the \emph{CP-rank} was first proposed in~\cite{Hitchcock27} to analyze multi-dimensional data; it was later exploited by Strassen \cite{Strassen69},  who also defined the \emph{subrank}~\cite{Strassen87},  to measure the complexity of bilinear maps.  Different versions of the \emph{hyperdeterminant} were given \cite{GKZ92,GKZ94,Cayley09} to generalize the determinant.  The \emph{Schmidt rank}~\cite{Schmidt85} was introduced to study rational points of algebraic varieties,  and was recast as \emph{strength} in \cite{ananyan2020small} to resolve the Stillman conjecture in commutative algebra.  The notion of \emph{slice rank},  formulated by Tao~\cite{Terrence16} and generalized to the \emph{partition rank} \cite{Naslund20},  was implicitly used in \cite{CLP17,EG17} to study the cap set problem in combinatorics.  To measure the randomness of multilinear functions over finite fields,  Gowers and Wolf developed the \emph{analytic rank} in \cite{GW11}.  The \emph{geometric rank},  an analogue of the analytic rank over arbitrary fields,  was proposed in \cite{kopparty2020geometric}.  The \emph{isotropic index},  emerging from the study of groups~\cite{Atkinson73,buhler1987isotropic},  has recently become a central topic at the interface between combinatorics and computer science~\cite{Dav1992number, BMW17, BCGQS21,qiao2020tur}.  These invariants,  among others~\cite{AH95,BB21,Lim05},  form a mosaic of multilinear maps.

Yet each invariant illuminates only one facet of multilinear maps---emerging evidence reveal that these seemingly disparate notions are intimately intertwined,  indicating that \emph{different structures of multilinear maps are inherently related}.  By way of illustration,  we recall Gowers and Wolf conjectured in~\cite{GW11} that the partition rank is equivalent to the analytic rank,  accordingly suggesting the equivalence between the decomposition and randomness structures of multilinear maps.  This conjecture has been almost resolved~\cite{cohen2023partition,moshkovitz2022quasi}.  A relation between Cayley's hyperdeterminant and the partition rank was recently discovered~\cite{AY23},  implying an interplay between the combinatorial and decomposition structures. In~\cite{kopparty2020geometric}, the question of whether the geometric rank relates to the subrank was left open.  A partial answer in~\cite{chen2025bounds} demonstrates a link between the geometric and computational structures of multilinear maps.  Other typical examples include the equivalence among the slice rank,  geometric rank and analytic rank for bilinear maps~\cite{cohen2021structure},  the equivalence between the geometric rank and analytic rank for arbitrary multilinear maps~\cite{chen2024stability},  and the equality of the strength and slice rank for generic multilinear maps~\cite{BBOV23}.  Along these lines,  we introduce in this work the isotropy and completeness indices for arbitrary (resp.  alternating,  symmetric) multilinear maps,  and establish relations between these indices and other invariants.  Consequently,  we obtain several interesting results,  each of which either resolves a conjecture,  or strengthens an existing result.  Our main contributions are divided into two parts,  summarized below in Subsections~\ref{subsec:main thm} and \ref{subsec:app}.

\subsection{Main theorems}\label{subsec:main thm}
The isotropy index $\alpha(F)$ and the completeness index $\beta(F)$ of a $\mathsf{K}$-multilinear map $F: \mathbb{V}_1 \times \cdots \times  \mathbb{V}_d \to \mathbb{W}$ quantify the shape of $\Ker(F) \coloneqq \lbrace 
T \in \mathbb{V}_1 \otimes \cdots \otimes \mathbb{V}_{d}: F(T) = 0
\rbrace$,  where we view $F$ as a linear map from $\mathbb{V}_1 \otimes \cdots \otimes  \mathbb{V}_d$ to $\mathbb{W}$.  When $F$ is alternating (resp.  symmetric),  the isotropy and completeness indices are denoted by $\alpha_{\w}(F)$ and $\beta_{\w}(F)$ (resp.  $\alpha_{\s}(F)$ and $\beta_{\s}(F)$).  The precise definitions of these indices can be found in Definition~\ref{def:index}.  Let $T_F$ be the tensor in $\mathbb{V}_1^\ast \otimes \cdots \otimes  \mathbb{V}_d^\ast \otimes \mathbb{W}$ corresponding to $F$ via the natural isomorphism $\Hom(\mathbb{V}_1 \times \cdots \times \mathbb{V}_d,  \mathbb{W}) \simeq \mathbb{V}_1^\ast \otimes \cdots \otimes  \mathbb{V}_d^\ast \otimes \mathbb{W}$.  We write $\PR(T),  \GR(T),  \AR(T)$ for the partition rank,  geometric rank,  analytic rank of a tensor $T$,  respectively.   We refer the reader to Definition~\ref{def:rank} for the definitions of these tensor ranks.  The main results of this paper are Theorems~\ref{lem-isotropic-general} and \ref{lem-complete1},  in which we respectively bound the isotropy and completeness indices of $F$ by various ranks of $T_F$.  The proofs of the two theorems are deferred to Section~\ref{sec:estimate}.
\begin{theorem}[Bounds on isotropy index]\label{lem-isotropic-general} 
Let $\mathsf{K}$ be a field and let $\mathbb{V}_1,\dots,  \mathbb{V}_d,  \mathbb{W}$ be vector spaces over $\mathsf{K}$.  Denote $m  \coloneqq \min\{\dim \mathbb{V}_i: i \in [d]\}$.  Suppose $F: \mathbb{V}_1 \times \cdots \times \mathbb{V}_d \to \mathbb{W}$ is a nonzero multilinear map.  Then 
\begin{enumerate}[label=(\alph*)]
\item\label{lem-isotropic-general:item1} $m \le \PR(T_F) (\alpha(F)+1)^{d-1} + \alpha(F) + 1$.
\item\label{lem-isotropic-general:item2} $m \le \PR(T_F) \binom{\alpha_{\w}(F)}{d-1}+ \alpha_{\w}(F)$,  if $\mathbb{V}_1 = \cdots = \mathbb{V}_d$ and $F$ is an alternating multilinear map.
\item\label{lem-isotropic-general:item3} $m \le \PR(T_F) \binom{\alpha_{\s}(F) + d - 1}{d-1} + \alpha_{\s}(F)$,  if $\mathsf{K}$ is algebraically closed,  $\mathbb{V}_1 = \cdots = \mathbb{V}_d$ and $F$ is a symmetric multilinear map.
\end{enumerate}
\end{theorem}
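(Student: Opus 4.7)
The strategy for all three parts is a common template: start from a maximal isotropic configuration, construct an ``extension map'' whose kernel records the directions in which the configuration can be enlarged, lower-bound its rank by the maximality of the isotropy index, and upper-bound its rank using the partition-rank decomposition of $T_F$. The resulting double inequality yields the claimed bound on $m$.

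For part \ref{lem-isotropic-general:item1}, assume $m\ge \alpha(F)+1$ (otherwise the bound is immediate). Fix subspaces $U_2\subseteq\mathbb{V}_2,\dots,U_d\subseteq\mathbb{V}_d$ each of dimension $\alpha(F)+1$ and consider
\[
\Phi\colon\mathbb{V}_1\to\Hom(U_2\otimes\cdots\otimes U_d,\mathbb{W}),\qquad \Phi(v)(u_2\otimes\cdots\otimes u_d)\coloneqq F(v,u_2,\dots,u_d).
\]
The maximality of $\alpha(F)$ forces $\dim\Ker(\Phi)\le \alpha(F)$: otherwise, any $(\alpha(F)+1)$-dimensional $U_1\subseteq\Ker(\Phi)$ would combine with $U_2,\dots,U_d$ into an isotropic $d$-tuple of common dimension $\alpha(F)+1$. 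Hence $\rank(\Phi)\ge m-\alpha(F)$. On the other hand, writing $T_F=\sum_{j=1}^{\PR(T_F)}A_j\otimes B_j$ as a partition-rank decomposition, each summand contributes to $\rank(\Phi)$ at most $(\alpha(F)+1)^{d-1}$---the dimension of $U_2^*\otimes\cdots\otimes U_d^*$---once the $\mathbb{W}$-factor of the partition is handled correctly. Combining these inequalities gives \ref{lem-isotropic-general:item1}.

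Parts \ref{lem-isotropic-general:item2} and \ref{lem-isotropic-general:item3} specialize the template to $\mathbb{V}_1=\cdots=\mathbb{V}_d=\mathbb{V}$. In the alternating case, a maximal isotropic $U\subseteq\mathbb{V}$ yields the extension map $\Phi_U\colon\mathbb{V}\to\Hom(\bigwedge^{d-1}U,\mathbb{W})$; antisymmetry annihilates all higher-order $v$-contributions when extending $U$ by a single vector, leaving $\Phi_U(v)=0$ as the sole obstruction. Repeating the rank bound from \ref{lem-isotropic-general:item1}, with $(\alpha(F)+1)^{d-1}$ replaced by $\dim\bigwedge^{d-1}U=\binom{\alpha_{\w}(F)}{d-1}$, yields \ref{lem-isotropic-general:item2}. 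For the symmetric case, extending $U$ by $v\notin U$ requires $F(v^k,u_1,\dots,u_{d-k})=0$ for every $k\in\{1,\dots,d\}$ and $u_1,\dots,u_{d-k}\in U$---a system of polynomial equations in $v$ of degrees $1,2,\dots,d$, with the total number of independent scalar equations bounded, through the partition-rank decomposition, by $\PR(T_F)\binom{\alpha_{\s}(F)+d-1}{d-1}$ (the hockey-stick sum $\sum_{k=1}^{d}\dim\Sym^{d-k}U$). Over an algebraically closed field, the affine dimension theorem forces the common vanishing locus in $\mathbb{V}$ to have dimension at least $m-\PR(T_F)\binom{\alpha_{\s}(F)+d-1}{d-1}$; the maximality of $U$ confines this locus to $U$, giving \ref{lem-isotropic-general:item3}.

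The main technical obstacle throughout is the per-term rank bound in the partition-rank decomposition: since a partition may place the $\mathbb{W}$-factor on either side of the split, the contribution of a single partition-rank-one summand to $\rank(\Phi)$ (or, in \ref{lem-isotropic-general:item3}, to the count of polynomial equations) depends delicately on the partition type. Ensuring the uniform bounds $(\alpha(F)+1)^{d-1}$, $\binom{\alpha_{\w}(F)}{d-1}$, and $\binom{\alpha_{\s}(F)+d-1}{d-1}$---in particular, independently of $\dim\mathbb{W}$---is where I expect the principal effort to lie.
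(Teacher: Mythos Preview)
Your proposal correctly isolates the crux—the $\mathbb{W}$-factor in the partition-rank decomposition—but the mechanism you propose does not close the gap. The per-term bound $\rank(\Phi_j)\le(\alpha(F)+1)^{d-1}$ is false whenever the partition groups the first factor with $\mathbb{W}$. Take $d=2$, $\mathbb{V}_1=\mathbb{V}_2=\mathsf{K}^{m}$, $\mathbb{W}=\mathsf{K}^{2m}$, and
\[
F(v_1,v_2)=\ell_1(v_2)\,G_1(v_1)+\ell_2(v_2)\,G_2(v_1),
\]
with $\ell_1,\ell_2\in\mathbb{V}_2^*$ linearly independent and $G_1(v)=(v,0)$, $G_2(v)=(0,v)$. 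Then $\PR(T_F)\le2$ (each summand splits along $\{2\}\mid\{1,3\}$) and $\alpha(F)=m-2$, since $aG_1+bG_2$ is injective for every $(a,b)\neq(0,0)$. For \emph{any} $(m-1)$-dimensional $U_2\subseteq\mathbb{V}_2$ at least one $\ell_j$ is nonzero on $U_2$, and for that $j$ the summand $\Phi_j(v_1)=(\ell_j|_{U_2})\otimes G_j(v_1)$ has $\rank\Phi_j=\rank G_j=m>m-1=(\alpha(F)+1)^{d-1}$. No choice of $U_2$ avoids this, so $\rank\Phi$ cannot be controlled term by term; the obstacle you flag at the end is not a technicality to be dispatched within this framework. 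The same issue recurs in your treatment of \ref{lem-isotropic-general:item2} and \ref{lem-isotropic-general:item3}, where the count of scalar equations is silently multiplied by $\dim\mathbb{W}$ whenever the $\mathbb{W}$-valued factor involves the extending variable.

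The paper sidesteps the problem by never bounding $\rank\Phi$. Writing $F_k=\sum_{j=1}^{t}f_j\,g_{kj}$ with $t=\PR(T_F)$, each $f_j$ is a \emph{scalar}-valued multilinear form on a nonempty subset $Q_j\subseteq[d]$ of the inputs, and the key observation is that $f_j(v_{Q_j})=0$ for all $j$ already forces $F(v)=0$. One then builds the isotropic configuration one vector at a time: having placed $s$ vectors in each slot, the next vector is chosen to satisfy $f_j=0$ against all previous choices, which imposes at most $t(s+1)^{d-1}$ \emph{scalar} linear equations in part~\ref{lem-isotropic-general:item1} (respectively $t\binom{s}{d-1}$ in \ref{lem-isotropic-general:item2}, and a system of homogeneous polynomials in \ref{lem-isotropic-general:item3} handled via algebraic closedness and the affine dimension theorem). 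Because the $f_j$ are scalar-valued, $\dim\mathbb{W}$ never enters the count; this iterative construction on the $f_j$ side, rather than a one-shot rank estimate on $F$, is the missing idea in your plan.
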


By Theorem~\ref{lem-isotropic-general},  a multilinear map with small isotropy index must have large partition rank.  However,  Theorem~\ref{lem-complete1} below shows the situation for the completeness index is reversed.  That is,  a multilinear map with small isotropy index must have small geometric and analytic ranks. 
\begin{theorem}[Bounds on completeness index] \label{lem-complete1} 
Let $\mathbb{V}_1,\dots,  \mathbb{V}_d,  \mathbb{W}$ be vector spaces over a field $\mathsf{K}$.  Suppose $F: \mathbb{V}_1 \times \cdots \times \mathbb{V}_d \to \mathbb{W}$ is a nonzero multilinear map.  
\begin{enumerate}[label = (\alph*)]
\item\label{lem-complete1:gen} We have $\GR(T_F) < (\beta (F)+1)^{d}$ for infinite $\mathsf{K}$,  and $\AR(T_F) < d+(\beta (F)+1)^{d}+ \left\lceil\frac{(\beta (F)+1)^{d}(d-1)}{|\mathsf{K}|-1} \right\rceil$ for finite $\mathsf{K}$.
\item\label{lem-complete1:alt} If $\mathbb{V}_1 = \cdots = \mathbb{V}_d$ and $F$ is an alternating multilinear map,  then $\GR(T_F) < \binom{\beta_{\w}(F)+1}{d}$ for infinite $\mathsf{K}$,  whereas 
$\AR(T_F) < d+\binom{\beta_{\w}(F)+1}{d}+\left\lceil\frac{\binom{\beta_{\w}(F)+1}{d}(d-1)}{|\mathsf{K}| -1} \right\rceil$ for finite $\mathsf{K}$.
\item\label{lem-complete1:sym} Assume either $\ch(\mathsf{K})=0$ or $\ch(\mathsf{K}) > d$ and $|\mathsf{K}| >  \height(\mathfrak{a}_F)$.  If $\mathbb{V}_1 = \cdots = \mathbb{V}_d = \mathbb{V}$ and $F$ is a symmetric multilinear map ,  then $\beta_{\s}(F)\le \height (\mathfrak{a}_F) < \binom{\beta_{\s}(F)+d}{d}$.  Moreover,  we have $\beta_{\s}(F)\le  \GR (T_F) < \binom{\beta_{\s}(F)+d}{d}$ if $\mathsf{K}$ is infinite.  Here $\mathfrak{a}_F \subseteq \mathbb{K}[x_1,\dots,  x_m]$ is the ideal generated by degree $d$ homogeneous polynomials obtained from $F$ by choosing bases of $\mathbb{V}$ and $\mathbb{W}$,  and $m = \dim \mathbb{V}$ and $n = \dim \mathbb{W}$.
\end{enumerate}
\end{theorem}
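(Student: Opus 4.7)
The plan is to translate the completeness-index data into an upper bound on the codimension of the degeneracy variety
\[
X_F \;=\; \{(v_1,\dots,v_{d-1}) \in \mathbb{V}_1 \times \cdots \times \mathbb{V}_{d-1} \;:\; F(v_1,\dots,v_{d-1},\cdot) \equiv 0\}
\]
that governs $\GR(T_F)$, and then to transfer the bound to $\AR(T_F)$ over finite fields via a geometric-to-analytic comparison of the form $\AR(T) \le d + \GR(T) + \lceil (d-1)\GR(T)/(|\mathsf{K}|-1)\rceil$ (cf.\ \cite{chen2024stability}). Throughout, the witnesses coming from the completeness index play the role of small product/symmetric/alternating slices on which $F$ encodes all of its geometric-rank obstruction.

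For part \ref{lem-complete1:gen}, I would extract from $\beta(F)=\beta$ a product of subspaces $\mathbb{U}_i \subseteq \mathbb{V}_i$ of dimension $\beta+1$ witnessing the index, and argue that the slice of $X_F$ over $\mathbb{U}_1 \times \cdots \times \mathbb{U}_{d-1}$ carries enough linear structure to force $\codim X_F < (\beta+1)^d$. Over infinite $\mathsf{K}$ this yields $\GR(T_F) < (\beta+1)^d$ directly; over finite $\mathsf{K}$ one first extends scalars to the algebraic closure to obtain the same geometric-rank bound, then plugs it into the $\AR$--$\GR$ inequality above. Parts \ref{lem-complete1:alt} and \ref{lem-complete1:sym} follow the same template but with the product $\mathbb{U}^{\otimes d}$ replaced by $\Lambda^{d}\mathbb{U}$ (of dimension $\binom{\beta_\w(F)+1}{d}$) and $\Sym^{d}\mathbb{U}$ (of dimension $\binom{\beta_\s(F)+d}{d}$) for a witness subspace $\mathbb{U}$ of dimension $\beta_\w(F)+1$ or $\beta_\s(F)+1$ respectively.

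For the sandwich in part \ref{lem-complete1:sym}, the upper bound $\height(\mathfrak{a}_F) < \binom{\beta_\s(F)+d}{d}$ should come from restricting the generators of $\mathfrak{a}_F$ to a $(\beta_\s(F)+1)$-variable coordinate system complementary to the witness subspace and applying Krull's height theorem (an ideal generated by $N$ elements has height at most $N$) with $N = \binom{\beta_\s(F)+d}{d}$. The lower bound $\beta_\s(F) \le \height(\mathfrak{a}_F)$ should come from exhibiting, via the completeness-index witness, a linear subspace of $V(\mathfrak{a}_F)$ of codimension $\beta_\s(F)$; the hypotheses $\ch(\mathsf{K})=0$ or $\ch(\mathsf{K})>d$ and $|\mathsf{K}|>\height(\mathfrak{a}_F)$ are what ensure this subspace can be taken $\mathsf{K}$-rational and that the symmetric polynomial manipulations (polarization, Taylor expansion) are not spoiled by low characteristic. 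The additional inequality $\beta_\s(F) \le \GR(T_F)$ over infinite $\mathsf{K}$ should then follow by identifying $\GR(T_F)$ with $\height(\mathfrak{a}_F)$ in the symmetric setting.

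The principal obstacle is the lower bound $\beta_\s(F) \le \height(\mathfrak{a}_F)$ in \ref{lem-complete1:sym}: a careful translation between the synthetic completeness-index witness and the commutative-algebraic height is required, and it is exactly here that the characteristic and cardinality hypotheses must enter. I expect the argument to route through a Nullstellensatz-style extraction of a $\mathsf{K}$-rational linear subspace of $V(\mathfrak{a}_F)$ from the witness, followed by a coordinate change bringing it into standard position so that Krull-type height estimates apply.
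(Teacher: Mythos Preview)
Your proposal has the argument running in the wrong direction, and the suggested mechanism does not work. You want to take subspaces $\mathbb{U}_i$ of dimension $\beta(F)+1$ ``witnessing the index'' and deduce a bound on $\codim X_F$. But there is no complete subspace of dimension $\beta(F)+1$; all you know is that every such product has a nonzero kernel element in $\mathbb{U}_1\otimes\cdots\otimes\mathbb{U}_d$. A single linear dependence among the values $F(u_{1,j_1},\dots,u_{d,j_d})$ on one small product carries no information about the global codimension of $X_F$: the degeneracy locus could still be tiny. There is no step here that ``forces $\codim X_F < (\beta+1)^d$.'' The paper argues in the opposite direction: assuming $\GR(T_F)\ge r^d$, it \emph{builds} vectors $v_{i,j}\in\mathbb{V}_i$, $(i,j)\in[d]\times[r]$, such that the $r^d$ values $F(v_{1,j_1},\dots,v_{d,j_d})$ are linearly independent, certifying $\beta(F)\ge r$. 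The construction is one multi-index at a time, and the key input at each step is a non-vanishing maximal minor result (Propositions~\ref{coro1} and~\ref{lem2}) that says: if the last row of a certain polynomial matrix generates an ideal of sufficiently large height (equivalently, $\GR$ or $\AR$ is large), then one can specialize the variables so that the matrix has full rank. This is the substantive algebraic content you are missing.

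Two further points. First, your Krull argument for the upper bound in \ref{lem-complete1:sym} bounds $\height(\mathfrak{a}_F)$ by the number of generators of $\mathfrak{a}_F$, which is $n=\dim\mathbb{W}$, not by $\binom{\beta_{\s}(F)+d}{d}$; restricting to a coordinate subspace does not reduce the number of generators. Second, your lower-bound argument is inverted: exhibiting a linear subspace of $V(\mathfrak{a}_F)$ of codimension $\beta_{\s}(F)$ would give $\height(\mathfrak{a}_F)\le\beta_{\s}(F)$, the wrong inequality. The correct route (as in the paper) is to observe that a complete subspace $\mathbb{U}$ of dimension $\beta_{\s}(F)$ makes the quotient ideal $(\mathfrak{a}_F+\mathfrak{a}(\mathbb{U}))/\mathfrak{a}(\mathbb{U})$ contain all degree-$d$ forms in a polynomial ring of dimension $\beta_{\s}(F)$, hence have height $\beta_{\s}(F)$; then Serre's inequality for heights (Lemma~\ref{lem:height}) gives $\height(\mathfrak{a}_F)\ge\beta_{\s}(F)$.
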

\subsection{Applications}\label{subsec:app}
Theorems~\ref{lem-isotropic-general} and \ref{lem-complete1} yield a number of interesting consequences in Section~\ref{sec:applications}.  Below we summarize the most important ones.  
\subsubsection{Geometric rank and subrank}
In \cite[Section~9]{kopparty2020geometric},  an open problem concerning the relation between the geometric rank and subrank was raised.  It was shown in \cite{chen2025bounds} that the geometric rank is upper bounded by a function $f$ of the subrank.  Moreover,  it was proved in the same paper that $f$ is a quadratic polynomial for order-$3$ tensors.  This provides evidence for the Geometric Rank vs.  Subrank Conjecture \cite[Conjecture~8.1]{chen2025bounds},  stating that $f$ is a polynomial of degree $(k-1)$ for order-$k$ tensors.  In the following result to be proved in Subsection~\ref{subsec:rank},  we not only resolve this conjecture,  but also prove that the degree $(k-1)$ is optimal.  Consequently,  this fully settles the open problem in \cite{kopparty2020geometric}.  In the following,  $\Q(T)$ denotes the subrank of a tensor $T$ (see Definition~\ref{def:rank}).
\begin{theorem}[Subrank vs.  geometric rank]\label{cor:GvsS}
Suppose $\mathsf{K}$ is a field and $\mathbb{V}_1,\dots,  \mathbb{V}_k$ are vector spaces over $\mathsf{K}$.  For any $T \in \mathbb{V}_1 \otimes \cdots \otimes \mathbb{V}_k$,  we have $\Q(T) \le \GR(T) \lesssim_k \Q(T)^{k-1}$.  Moreover,  if $\dim \mathbb{V}_j \ge 2k + 2$ for each $j \in [k]$,  there exists some $T_0 \in \mathbb{V}_1 \otimes \cdots \otimes \mathbb{V}_k$ such that $\GR(T_0) \asymp_k \Q(T_0)^{k-1} $.
\end{theorem}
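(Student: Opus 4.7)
The plan is to prove Theorem~\ref{cor:GvsS} in three parts: the lower bound $\Q(T) \le \GR(T)$, the upper bound $\GR(T) \lesssim_k \Q(T)^{k-1}$ which uses Theorem~\ref{lem-complete1}\ref{lem-complete1:gen} as interpolator, and a construction of a tensor $T_0$ showing tightness of the exponent $k-1$.

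The lower bound follows from monotonicity of $\GR$ under restriction. If $\Q(T) = r$, then the diagonal unit tensor $I_r \coloneqq \sum_{i=1}^{r} e_i^{\otimes k}$ is obtained from $T$ by applying suitable linear maps on each factor. Viewed as a $(k-1)$-linear map to $\mathsf{K}^r$, the vanishing locus of $I_r$ decomposes as a union, indexed by functions $f : [r] \to [k-1]$, of coordinate subspaces each of codimension $r$, so $\GR(I_r) = r$. Monotonicity under restriction then yields $\GR(T) \ge \GR(I_r) = r$.

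For the upper bound, I would reinterpret $T \in \mathbb{V}_1 \otimes \cdots \otimes \mathbb{V}_k$ as the tensor $T_F$ of the $(k-1)$-linear map $F : \mathbb{V}_1^\ast \times \cdots \times \mathbb{V}_{k-1}^\ast \to \mathbb{V}_k$ and apply Theorem~\ref{lem-complete1}\ref{lem-complete1:gen} with $d = k-1$, obtaining $\GR(T) < (\beta(F) + 1)^{k-1}$ when $\mathsf{K}$ is infinite (over finite $\mathsf{K}$ the same theorem supplies a corresponding bound on $\AR(T)$, which controls $\GR(T)$ up to $O_k(1)$ through the near-equivalence of geometric and analytic ranks alluded to in the introduction). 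The crux is an auxiliary inequality $\beta(F) \le \Q(T_F)$: intuitively, a $\beta$-complete substructure of $F$ should furnish linear maps on each factor and dual data in $\mathbb{V}_k$ that together realize the diagonal tensor $I_\beta$ as a restriction of $T_F$. Granting this, $\GR(T) < (\Q(T) + 1)^{k-1} \lesssim_k \Q(T)^{k-1}$.

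For the optimality claim, I would take $T_0$ to be a sufficiently generic tensor in $\mathbb{V}_1 \otimes \cdots \otimes \mathbb{V}_k$ with each $\dim \mathbb{V}_j = n \ge 2k+2$. A complete-intersection/Bertini argument gives $\GR(T_0) = n$ generically, since the $n$ multilinear equations cutting out the vanishing locus are in general position. A dimension count on the subrank-$r$ locus then bounds $\Q(T_0)$: the condition $(A_1 \otimes \cdots \otimes A_k)(T_0) = I_r$ has $knr$ free parameters in $(A_1, \ldots, A_k)$ but imposes $r^k$ scalar equations, which forces $\Q(T_0) \lesssim_k n^{1/(k-1)}$ for generic $T_0$. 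Combined, these produce $\GR(T_0) = n \asymp_k \Q(T_0)^{k-1}$, and the hypothesis $n \ge 2k+2$ should emerge naturally from ensuring both genericity estimates are simultaneously valid. The main obstacle is the auxiliary inequality $\beta(F) \le \Q(T_F)$, which requires carefully unpacking the completeness index from Definition~\ref{def:index} and explicitly converting a witness of $\beta$-completeness into a diagonal subrank restriction of $T_F$; making the transversality step in the optimality fully rigorous (in particular verifying that the $r^k$ equations are generically independent relative to the $knr$ parameters) is a secondary but nontrivial check.
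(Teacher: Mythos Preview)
Your approach to the chain $\Q(T) \le \GR(T) \lesssim_k \Q(T)^{k-1}$ matches the paper's exactly: the paper packages $\beta(F) \le \Q(F) \le \GR(F) < (\beta(F)+1)^{k-1}$ as Proposition~\ref{main theorem 2}, and the auxiliary inequality $\beta(F) \le \Q(T_F)$ you flag as the ``main obstacle'' is dispatched there in one line---a complete subspace $\mathbb{U}_1 \times \cdots \times \mathbb{U}_{k-1}$ of dimension $r$ makes $F|_{\mathbb{U}}$ injective as a linear map on $\mathbb{U}_1 \otimes \cdots \otimes \mathbb{U}_{k-1}$, whence $I_r \unlhd F|_{\mathbb{U}} \unlhd F$.

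The genuine gap is in the optimality half, specifically over \emph{finite} fields. Your sketch appeals to a ``sufficiently generic'' tensor together with a parameter count, and for infinite $\mathsf{K}$ this is essentially what the paper does (citing \cite{pielasa2025exact} for the exact generic subrank rather than redoing the transversality check you rightly call nontrivial, and using lower semicontinuity of $\GR$ plus Zariski density of $\mathsf{K}$-points to get $\GR = n$ generically). But when $|\mathsf{K}| = q < \infty$, a nonempty Zariski-open subset of $(\mathsf{K}^n)^{\otimes k}$ need not contain any $\mathsf{K}$-point, so genericity alone does not furnish a witness $T_0$. The paper handles this case by a separate counting argument: (i) at least a $(1-q^{-1})$-fraction of tensors have subrank $\le r$ once $r^k \ge nrk+1$, proved by bounding the image of the map $\GL_n(\mathsf{F}_q)^k \times D_r \to X_r$ via a lower bound on fibre sizes; and (ii) a positive fraction (at least $1 - q^{-n/2 - 1 + k}$) of tensors have analytic rank $\gtrsim n/2$, proved by double-counting incidences $(v,F)$ with $F(v)=0$. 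Since these two fractions exceed $1$ in total, some tensor enjoys both properties, and Lemma~\ref{grvsar} converts large $\AR$ to $\GR \asymp_k n$. The threshold $n \ge 2k+2$ is precisely what allows the choice $r = \lceil (2kn)^{1/(k-1)} \rceil$ to satisfy the constraints in step~(i). Your proposal would need to supply this (or an equivalent) finite-field argument to be complete.
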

Since the geometric rank is additive with respect to the direct sum,  Theorem~\ref{cor:GvsS} immediately leads to an estimate for the subrank of the direct sum. 
\begin{corollary}[Subrank of direct sum]
\label{cor:add}
Suppose $\mathsf{K}$ is a field and $S, T$ are two tensors over $\mathsf{K}$ of order $k$,  then we have $\Q(S) + \Q(T) \le \Q(S \oplus T) \lesssim_k  \Q(S)^{k-1} + \Q(T)^{k-1}$.
\end{corollary}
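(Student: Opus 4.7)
The plan is a short chain of inequalities that combines Theorem~\ref{cor:GvsS} with the additivity of geometric rank under direct sum (cited just above the statement of the corollary). No new technique is required; the result is a direct corollary, so the sketch is really about assembling the right two-line argument.

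For the lower bound $\Q(S) + \Q(T) \le \Q(S \oplus T)$, I would argue from the definition of subrank. If $\Q(S) = r$ and $\Q(T) = s$, there are tuples of linear maps witnessing that the rank-$r$ diagonal tensor is a restriction of $S$ and the rank-$s$ diagonal tensor is a restriction of $T$. Taking block-diagonal combinations of these tuples factor by factor gives a restriction of $S \oplus T$ onto the rank-$(r+s)$ diagonal tensor, so $\Q(S \oplus T) \ge r + s$. This superadditivity is a standard property of the subrank and needs no appeal to the machinery of the paper.

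For the upper bound I would concatenate three ingredients: the easy inequality $\Q \le \GR$ (the left half of Theorem~\ref{cor:GvsS}), the additivity $\GR(S \oplus T) = \GR(S) + \GR(T)$ cited above the corollary, and the polynomial bound $\GR(T') \lesssim_k \Q(T')^{k-1}$ (the substantive half of Theorem~\ref{cor:GvsS}) applied to each summand. Chaining these yields
\[
\Q(S \oplus T) \le \GR(S \oplus T) = \GR(S) + \GR(T) \lesssim_k \Q(S)^{k-1} + \Q(T)^{k-1},
\]
which is exactly the desired estimate.

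Since Theorem~\ref{cor:GvsS} is treated as already proved and the additivity of geometric rank is invoked as a preliminary, no genuine obstacle arises. The only points worth double-checking are that the implicit constant in $\lesssim_k$ depends only on $k$ (inherited verbatim from Theorem~\ref{cor:GvsS}, applied twice) and that the block-diagonal restriction used in the lower bound is a valid restriction for the tensor $S \oplus T$, both of which are routine.
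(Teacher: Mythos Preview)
Your proposal is correct and matches the paper's own argument exactly: the paper states that the corollary follows immediately from Theorem~\ref{cor:GvsS} together with the additivity of the geometric rank under direct sum, which is precisely the chain $\Q(S\oplus T)\le \GR(S\oplus T)=\GR(S)+\GR(T)\lesssim_k \Q(S)^{k-1}+\Q(T)^{k-1}$ you wrote down, with the lower bound coming from the standard superadditivity of subrank.
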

For any positive integers $n$ and $k$,  \cite[Theorem~5.2]{derksen2024subrank} constructs tensors $S_0, T_0 \in (\mathsf{K}^{n})^{\otimes k}$ such that $\Q(S_0 \oplus T_0) \gtrsim_k \Q(S_0)^{k-1} + \Q(T_0)^{k-1}$.  This implies that the upper bound in Corollary~\ref{cor:add} is optimal,  up to a constant factor.  Thus,  Corollary~\ref{cor:add} resolves the open problem posed in Section~6 of \cite{derksen2024subrank},  which asked for a relation between $\Q(S \oplus T)$ and $\Q(S),  \Q(T)$.  For $k = 3$,  Corollary~\ref{cor:add} can be proved by a completely different method \cite[Corollary~1.8]{chen2025bounds}.

In Subsection~\ref{subsec:rank},  we will also obtain Corollaries~\ref{cor:svsp}--\ref{cor:debor},  which generalize existing results of \cite{chen2025bounds} and \cite{biaggi2025real} from order-$3$ tensors to order-$k$ tensors.

\subsubsection{Ramsey numbers}
In Definition~\ref{def:Ramsey},  we define the Ramsey number $R(\mathsf{K},d,s,t)$ to be the minimum integer $m$ such that any $\mathsf{K}$-multilinear map $F: \mathbb{V}_1 \times \cdots \times \mathbb{V}_d \to \mathbb{W}$ satisfies either $\alpha(F) \ge s$ or $\beta(F) \ge t$ whenever $\min_{j \in [d]} \dim \mathbb{V}_j \ge m$.  For the alternating (resp.  symmetric) case,  we similarly define the Ramsey number $R_{\w}(\mathsf{K},d,s,t)$ (resp.  $R_{\s}(\mathsf{K},d,s,t)$).  A priori,  the Ramsey numbers need not even exist; however,  using Theorems~\ref{lem-isotropic-general} and~\ref{lem-complete1},  we will establish in Subsection~\ref{subsec:Ramsey} their existence and upper bounds.
\begin{theorem}[Existence and upper bounds for Ramsey numbers]\label{prop:Ramsey}
Given nonnegative integers $d,  s,t$,  the Ramsey numbers $R(\mathsf{K},d,s,t)$ and $R_{\w}(\mathsf{K},d,s,t)$ exist for any field $\mathsf{K}$,  while $R_{\s}(\mathsf{K},d,s,t)$ exists when $\mathsf{K}$ is algebraically closed.   Moreover,  in each case,  we have
\begin{enumerate}[label = (\alph*)]
\item\label{prop:Ramsey:item1}$R(\mathsf{K},d,s,t) \lesssim_d s^{d-1}t^{d^2}$ if $|\mathsf{K}| = \infty$,  and $R(\mathsf{K},d,s,t) \lesssim_d s^{d-1}t^{d}(\log_q s + \log_q t) $ if $|\mathsf{K}| = q$.
\item\label{prop:Ramsey:item2} $R_{\w}(\mathsf{K},d,s,t) \lesssim_d s^{d-1}t^{d^2}$ if $|\mathsf{K}| = \infty$,  and $R_{\w}(\mathsf{K},d,s,t) \lesssim_d s^{d-1}t^{d}(\log_q s + \log_q t) $ if $|\mathsf{K}| = q$.
\item\label{prop:Ramsey:item3} $R_{\s}(\mathsf{K},d,s,t) \lesssim_d s^{d-1}t^{d}$.
\end{enumerate}
\end{theorem}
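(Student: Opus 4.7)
My strategy is to combine the isotropy bound (Theorem~\ref{lem-isotropic-general}) with the completeness bound (Theorem~\ref{lem-complete1}), using in between an existing polynomial comparison of the partition rank with the geometric/analytic rank of $T_F$.

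Fix a nonzero $F: \mathbb{V}_1 \times \cdots \times \mathbb{V}_d \to \mathbb{W}$ and set $m = \min_j \dim \mathbb{V}_j$. Suppose toward a contradiction that $\alpha(F) < s$ and $\beta(F) < t$ (with $\alpha_{\w},\beta_{\w}$ or $\alpha_{\s},\beta_{\s}$ in the alternating/symmetric cases). From Theorem~\ref{lem-complete1}\ref{lem-complete1:gen}, the hypothesis $\beta(F) < t$ gives $\GR(T_F) < (t+1)^{d}$ when $\mathsf{K}$ is infinite and $\AR(T_F) < d + (t+1)^{d} + \lceil (t+1)^{d}(d-1)/(|\mathsf{K}|-1) \rceil$ when $|\mathsf{K}|=q$; the alternating version is identical with $(t+1)^d$ replaced by $\binom{t+1}{d}$. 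Converting these to a partition-rank bound via the known polynomial equivalences of $\PR$ with $\GR$ over infinite fields and of $\PR$ with $\AR$ over finite fields (the Cohen--Moshkovitz and follow-up comparisons) produces $\PR(T_F) \lesssim_d t^{d^2}$ in the infinite-field case, and $\PR(T_F) \lesssim_d t^{d}(\log_q s + \log_q t)$ in the finite-field case. The $\log_q s$ term enters only through the tolerance parameter in the finite-field comparison, inserted so that the final Ramsey bound remains polynomial in both $s$ and $t$.

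Plugging this into Theorem~\ref{lem-isotropic-general}\ref{lem-isotropic-general:item1} (respectively \ref{lem-isotropic-general:item2}) under $\alpha(F)<s$ gives $m \le \PR(T_F)\cdot s^{d-1} + s + 1$ up to $O_d(1)$ factors, so combining yields $m \lesssim_d s^{d-1} t^{d^2}$ over infinite fields and $m \lesssim_d s^{d-1} t^{d}(\log_q s + \log_q t)$ over finite fields. Any $F$ whose dimensions exceed these bounds must satisfy $\alpha(F)\ge s$ or $\beta(F)\ge t$, which simultaneously establishes existence of $R(\mathsf{K},d,s,t)$ and $R_{\w}(\mathsf{K},d,s,t)$ and the upper bounds in \ref{prop:Ramsey:item1} and \ref{prop:Ramsey:item2}.

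For the symmetric case \ref{prop:Ramsey:item3}, I would bypass the $\GR/\AR \to \PR$ conversion and use Theorem~\ref{lem-complete1}\ref{lem-complete1:sym} directly: $\PR(T_F) \le \height(\mathfrak{a}_F) < \binom{t+d}{d} \lesssim_d t^{d}$, where the first inequality is the standard fact that a partition-rank decomposition of a symmetric tensor furnishes a set of generators of $\mathfrak{a}_F$ whose cardinality bounds the height. Combining with Theorem~\ref{lem-isotropic-general}\ref{lem-isotropic-general:item3}, which under $\alpha_{\s}(F)<s$ gives $m \lesssim_d \PR(T_F)\cdot s^{d-1} + s$, yields the cleaner $m \lesssim_d s^{d-1} t^{d}$. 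The main obstacle I anticipate is the $\GR/\AR \to \PR$ conversion in the non-symmetric cases: it is this step that drives the exponent $t^{d^2}$ over infinite fields and the logarithmic overhead over finite fields, and one must cite the comparison in a form whose constants depend only on $d$; in the symmetric setting the passage through $\height(\mathfrak{a}_F)$ sidesteps this issue, which accounts for the sharper bound in \ref{prop:Ramsey:item3}.
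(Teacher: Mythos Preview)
Your argument for parts~\ref{prop:Ramsey:item1} and~\ref{prop:Ramsey:item2} is essentially the paper's own proof: bound $\GR(T_F)$ or $\AR(T_F)$ via Theorem~\ref{lem-complete1}, convert to a $\PR$ bound via Lemma~\ref{prvsgr}(d) (infinite case, exponent $d$ since $T_F$ has order $d+1$) or Lemma~\ref{prvsar} (finite case), and feed this into Theorem~\ref{lem-isotropic-general}. The remark about the $\log_q s$ ``tolerance parameter'' is a bit garbled---the paper obtains the $\log_q s$ from the threshold choice $n \asymp f(t)s^{d-1}\log_q(f(t)s^{d-1})$, not from the $\PR$--$\AR$ comparison itself---but this is cosmetic.

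Your argument for~\ref{prop:Ramsey:item3}, however, contains a real error. The inequality $\PR(T_F)\le \height(\mathfrak{a}_F)$ is false, and your own justification proves the reverse direction: a partition-rank decomposition $T_F=\sum_{j=1}^r f_j\otimes g_j$ shows $\mathfrak{a}_F\subseteq (f_1,\dots,f_r)$, whence by Krull's height theorem $\height(\mathfrak{a}_F)\le r=\PR(T_F)$, not the other way. For a concrete counterexample take $d=2$, $F=(x_1^2,\,x_1x_2)$: then $\mathfrak{a}_F=(x_1^2,x_1x_2)$ has radical $(x_1)$ and height~$1$, while $T_F\in \mathbb{V}^*\otimes\mathbb{V}^*\otimes\mathsf{K}^2$ has all three flattening ranks equal to~$2$ and is not of slice rank~$1$, so $\PR(T_F)=2>1=\height(\mathfrak{a}_F)$. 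The paper does \emph{not} bypass the $\GR\to\PR$ conversion in the symmetric case; rather, it exploits that $\mathsf{K}$ is algebraically closed, so Lemma~\ref{prvsgr}(b) gives the \emph{linear} bound $\PR(T_F)\lesssim_d \GR(T_F)$, and then Theorem~\ref{lem-complete1}\ref{lem-complete1:sym} supplies $\GR(T_F)<\binom{\beta_{\s}(F)+d}{d}\lesssim_d t^d$. This is why~\ref{prop:Ramsey:item3} avoids the $t^{d^2}$ exponent: the improvement comes from the stronger $\PR$--$\GR$ comparison available over algebraically closed fields, not from routing through $\height(\mathfrak{a}_F)$.
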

The existence of $R_{\w}(\mathsf{K},d,s,t)$ in Theorem~\ref{prop:Ramsey} extends \cite[Theorem~4.1]{qiao2020tur} from bilinear to multilinear maps,  and completely resolves the conjecture \cite[Conjecture~5.5]{qiao2020tur}.  We will also derive a lower bound for $R_{\w}(\mathsf{K},  d,  s,t)$. 
\begin{proposition}[Lower bound for $R_{\Lambda}(\mathsf{K},d,s,t)$]\label{ramesey lower bound}
For any field $\mathsf{K}$ and nonnegative integers $d,  s$ and $t$,  we have 
\[
R_{\w}(\mathsf{K},  d,  s,t) \ge  \frac{\binom{s-1}{d}}{s-1} \left[ \binom{t}{d}-1 \right] + s \asymp_d s^{d-1} t^d.
\]
In particular,  over a field $\mathsf{K}$ satisfying $\PR(T) \lesssim_k \GR(T)$ for any order-$k$ tensor $T$ over $\mathsf{K}$,  we have $R_{\w}(\mathsf{K},  d,  s,t) \asymp_d s^{d-1} t^d$. 
\end{proposition}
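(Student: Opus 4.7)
The plan is to establish the lower bound by an explicit construction of an alternating $d$-linear map with small isotropy and completeness indices, and to derive the asymptotic identity by combining this lower bound with the upper bounds from Theorems~\ref{lem-isotropic-general} and~\ref{lem-complete1} under the hypothesis $\PR(T) \lesssim_k \GR(T)$.

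For the lower bound, I would exhibit a nonzero alternating $d$-linear map $F \colon \mathbb{V}^d \to \mathbb{W}$ over $\mathsf{K}$ with $\dim \mathbb{V} = \frac{\binom{s-1}{d}}{s-1}\bigl(\binom{t}{d}-1\bigr) + s - 1$ satisfying both $\alpha_{\w}(F) \le s-1$ and $\beta_{\w}(F) \le t-1$; by the definition of the Ramsey number, such an $F$ witnesses $R_{\w}(\mathsf{K}, d, s, t) > \dim \mathbb{V}$, which is the claimed inequality. The completeness index is controlled for free by a dimension count: setting $\dim \mathbb{W} = \binom{t}{d} - 1$, for any $t$-dimensional $U \subseteq \mathbb{V}$ the map $F|_{\Lambda^d U}\colon \Lambda^d U \to \mathbb{W}$ cannot be injective since $\dim \Lambda^d U = \binom{t}{d} > \dim \mathbb{W}$, forcing $\beta_{\w}(F) \le t - 1$ regardless of how $F$ is chosen. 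To control the isotropy index, I would decompose $\mathbb{V} = U_0 \oplus V'$ with $\dim U_0 = s - 1$, fix a basis of $U_0$, and partition a basis of $V'$ into blocks indexed by $d$-subsets $J \in \binom{[s-1]}{d}$, each of size roughly $(\binom{t}{d}-1)/(s-1)$, so that $\dim V' = \frac{\binom{s-1}{d}}{s-1}\bigl(\binom{t}{d}-1\bigr)$. The map $F$ is then defined combinatorially: its restriction to $U_0^d$ realises the universal alternating $d$-form on $U_0$, and each block $V'_J$ is coupled with basis vectors of $U_0$ whose indices lie in $J$ via a collection of linearly independent images in $\mathbb{W}$. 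The main technical step is to verify $\alpha_{\w}(F) \le s - 1$: for every $s$-dimensional subspace $U' \subseteq \mathbb{V}$ one must produce a $d$-tuple $v_1, \ldots, v_d \in U'$ with $F(v_1, \ldots, v_d) \ne 0$, by a case analysis on how $U'$ projects to $U_0$ and meets the blocks $V'_J$. This is an incidence/pigeonhole argument in the spirit of classical Ramsey-type constructions on hypergraphs, and it is what fixes the precise coefficient $\binom{s-1}{d}/(s-1)$ in the formula.

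For the asymptotic conclusion, assume $\PR(T) \lesssim_k \GR(T)$ and let $F$ be any nonzero alternating $d$-linear map with $\alpha_{\w}(F) < s$ and $\beta_{\w}(F) < t$. Theorem~\ref{lem-isotropic-general}\ref{lem-isotropic-general:item2} yields
\[
\dim \mathbb{V} \le \PR(T_F)\binom{\alpha_{\w}(F)}{d-1} + \alpha_{\w}(F) \lesssim_d \PR(T_F)\, s^{d-1} + s,
\]
while Theorem~\ref{lem-complete1}\ref{lem-complete1:alt} gives, for infinite $\mathsf{K}$, $\GR(T_F) < \binom{\beta_{\w}(F)+1}{d} \lesssim_d t^d$. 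Invoking the hypothesis $\PR(T_F) \lesssim_d \GR(T_F) \lesssim_d t^d$ we conclude $\dim \mathbb{V} \lesssim_d s^{d-1} t^d$, and therefore $R_{\w}(\mathsf{K}, d, s, t) \lesssim_d s^{d-1} t^d$; together with the lower bound this gives $R_{\w}(\mathsf{K}, d, s, t) \asymp_d s^{d-1} t^d$, with the finite-field case handled analogously using the $\AR$-bound in Theorem~\ref{lem-complete1}\ref{lem-complete1:alt}. The hardest step overall is the combinatorial verification sketched above, where the block sizes must be calibrated exactly so that every $s$-dimensional subspace in general position relative to the block decomposition is ruled out as an isotropic subspace while $\dim V'$ is pushed to its optimum.
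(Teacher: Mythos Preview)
Your overall strategy coincides with the paper's: exhibit an alternating $d$-linear map $F\colon\mathbb{V}^d\to\mathbb{W}$ with $\dim\mathbb{W}=\binom{t}{d}-1$ and $\alpha_{\w}(F)\le s-1$, observe that the choice of $\dim\mathbb{W}$ automatically forces $\beta_{\w}(F)\le t-1$ (this is exactly the dimension inequality $\binom{\beta_{\w}(F)}{d}\le\dim\mathbb{W}$ of Lemma~\ref{lem:basic}\ref{lem:basic:item2'}), and then combine with the upper bound under the hypothesis $\PR\lesssim_k\GR$.

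The difference is that the paper does not build the map $F$ from scratch: it simply invokes \cite[Section~5]{Dav1992number}, which already supplies, for the parameters $\alpha=s-1$, $m=\binom{t}{d}-1$, $n=\binom{\alpha}{d}m/\alpha+\alpha$, an $F\in\Alt^d(\mathsf{K}^n,\mathsf{K}^m)$ with $\alpha_{\w}(F)\le\alpha$. Your block-decomposition sketch is heading toward such a construction, but the step you yourself flag as the ``main technical step''---showing that every $s$-dimensional subspace of $\mathbb{V}$ is non-isotropic---is left as a case analysis you do not carry out; that verification is precisely the nontrivial content of Davydov's construction, and your description of $F$ (``coupled with basis vectors of $U_0$ whose indices lie in $J$ via a collection of linearly independent images'') is too vague to be checked as stated. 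If you want a self-contained proof, you would need to spell out the values of $F$ on mixed wedge products and then run the pigeonhole argument in full.

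For the ``in particular'' part, your derivation via Theorems~\ref{lem-isotropic-general}\ref{lem-isotropic-general:item2} and~\ref{lem-complete1}\ref{lem-complete1:alt} is correct and is exactly what the paper packages separately as Corollary~\ref{special bound}; the paper's proof of the proposition just cites that corollary. For finite fields one indeed passes through $\GR\asymp_d\AR$ (Lemma~\ref{grvsar}) before applying the hypothesis $\PR\lesssim_k\GR$, as you indicate.
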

For $d = 2$,  the lower bound in Proposition~\ref{ramesey lower bound} has been essentially established in \cite[Proposition~5.2]{qiao2020tur}.  If moreover $\mathsf{K}$ is perfect, then Proposition~\ref{ramesey lower bound} shows that $R_{\w}(\mathsf{K},2, s,t) \approx_k st^2$.  This closes the gap between the upper and lower bounds obtained in \cite{qiao2020tur},  and thereby answers the open problem raised in \cite[Subsection~5.5]{qiao2020tur}.   

In Subsection~\ref{subsec:Ramsey},  we discuss applications of Theorem~\ref{prop:Ramsey} to multilinear algebra,  group theory and algebraic geometry,  and obtain Corollaries~\ref{cor:matrixspace}, \ref{cor:Ramseygroup}--\ref{cor:var},  which strengthen the existing results \cite[Corollary~3]{Dav1992number} and \cite[Corollaries~4.2 and 4.4]{qiao2020tur}.

\subsection{Organization}
In Section~\ref{sec:prelim},  we provide a brief overview of multilinear maps,  tensor ranks and algebraic geometry over finite fields.  We define the isotropy and completeness indices in Section~\ref{sec:indices},  discuss their basic properties and present some concrete examples.  We prepare two technical results in Section~\ref{sec:alg} that are necessary for the proofs of Theorems~\ref{lem-isotropic-general} and \ref{lem-complete1} given in Section~\ref{sec:estimate}.  Section~\ref{sec:applications} consists of three applications of Theorems~\ref{lem-isotropic-general} and \ref{lem-complete1}: the comparison of tensor ranks,  the bound for Ramsey numbers and the probabilistic algorithm for the completeness index.  

\section{Preliminaries}\label{sec:prelim}
In this section,  we fix basic notations and recall some facts that will be used in the subsequent sections.  For each positive integer $n$,  we denote $[n] \coloneqq \{1,\dots,  n\}$.  We use $\mathsf{F},  \mathsf{K}$ to denote general fields,  and we reserve $\mathsf{F}_q$ for the finite field of $q$ elements.  Let $\mathbb{V}_1,\dots,  \mathbb{V}_d,  \mathbb{W}$ be vector spaces over a field  $\mathsf{K}$.  We denote
\begin{align*}
\Hom(\mathbb{V}_1 \times \cdots \times \mathbb{V}_{d},  \mathbb{W}) &\coloneqq \left\lbrace
\text{$\mathsf{K}$-multilinear maps from~}\mathbb{V}_1 \times \cdots \times \mathbb{V}_{d} \text{~to~} \mathbb{W}
\right\rbrace,  \\
\Hom(\mathbb{V}_1 \otimes \cdots \otimes \mathbb{V}_{d},  \mathbb{W}) &\coloneqq \left\lbrace
\text{$\mathsf{K}$-linear maps from~}\mathbb{V}_1 \otimes \cdots \otimes \mathbb{V}_{d} \text{~to~} \mathbb{W}
\right\rbrace. 
\end{align*}
By definition,  we have 
\begin{equation}\label{eq:isom}
\Hom(\mathbb{V}_1 \times \cdots \times \mathbb{V}_{d},  \mathbb{W}) \simeq \Hom(\mathbb{V}_1 \otimes \cdots \otimes \mathbb{V}_{d},  \mathbb{W}) \simeq \mathbb{V}_1^\ast \otimes \cdots \otimes \mathbb{V}_{d}^\ast \otimes \mathbb{W}
\end{equation}
In particular,  if $\mathbb{V}_1 = \cdots = \mathbb{V}_{d} = \mathbb{V}$,  then \eqref{eq:isom} induces 
\begin{align*}
\Alt(\mathbb{V}^{d},\mathbb{W}) &\simeq \Hom( \Lambda^{d} \mathbb{V},  \mathbb{W} ) \simeq \Lambda^{d} \mathbb{V}^\ast \otimes \mathbb{W},  \\ 
\Sym(\mathbb{V}^{d},\mathbb{W}) &\simeq \Hom( \mathrm{S}^{d} \mathbb{V},  \mathbb{W} ) \simeq \mathsf{S}^{d} \mathbb{V}^\ast \otimes \mathbb{W}.
\end{align*}
Here $\Lambda^{d} \mathbb{V}$ (resp.  $\mathrm{S}^{d} \mathbb{V}$) is the subspace of $\mathbb{V}^{\otimes d}$ consisting of alternating (resp.  symmetric) tensors,  and 
\begin{align*}
\Alt^{d}(\mathbb{V},  \mathbb{W}) &\coloneqq \left\lbrace
\text{alternating $\mathsf{K}$-multilinear maps from~} \mathbb{V}^{d} \text{~to~} \mathbb{W}
\right\rbrace,  \\
\Hom( \Lambda^{d} \mathbb{V},  \mathbb{W} ) &\coloneqq \left\lbrace
\text{$\mathsf{K}$-linear maps from~} \Lambda^{d} \mathbb{V} \text{~to~} \mathbb{W}
\right\rbrace,   \\
\Sym^{d}(\mathbb{V},  \mathbb{W}) &\coloneqq \left\lbrace
\text{symmetric $\mathsf{K}$-multilinear maps from~} \mathbb{V}^{d} \text{~to~} \mathbb{W}
\right\rbrace,  \\
\Hom( \mathrm{S}^{d}\mathbb{V},  \mathbb{W}) &\coloneqq \left\lbrace
\text{$\mathsf{K}$-linear maps from~} \mathrm{S}^{d} \mathbb{V} \text{~to~} \mathbb{W}
\right\rbrace.
\end{align*}
In the sequel,  we will view these isomorphic vector spaces as indistinguishable and will identify their corresponding elements.

\subsection{Tensor ranks}\label{subsec:prelim-rank}
Let $\mathbb{W}_1,\dots,  \mathbb{W}_k,  \mathbb{W}'_1,\dots,  \mathbb{W}'_k$ be vector spaces over $\mathsf{K}$.  Suppose $T \in \mathbb{W}_1 \otimes \cdots \otimes \mathbb{W}_k$ and $S \in  \mathbb{W}'_1 \otimes \cdots \otimes \mathbb{W}'_k$ are two tensors.  We write 
\[
T = \sum_{j=1}^r w_{1,j}\otimes \cdots \otimes w_{k,j},\quad w_{i,j} \in \mathbb{W}_i,\; (i,j) \in [k] \times [r].
\]
We say that $S$ is a \emph{restriction} of $T$,  denoted as $S \unlhd T$,  if there exists some $(g_1,\dots,  g_k) \in \prod_{i=1}^k \Hom(\mathbb{W}_i, \mathbb{W}'_i)$ such that $S = (g_1,  \dots,  g_k) \cdot T$.  Here 
\begin{equation}\label{eq:action}
(g_1,  \dots,  g_k) \cdot T \coloneqq \sum_{j=1}^r g_1 (w_{1,j}) \otimes \cdots g_k(w_{k,j}).
\end{equation}
Given distinct integers $i_1,\dots,  i_s \in [k]$ and $\ell_{i_t} \in \mathbb{W}_{i_t}^\ast$,   $t \in [s]$,  we denote 
\[
\langle T,  \ell_{i_1} \otimes \cdots \otimes \ell_{i_s} \rangle \coloneqq \sum_{j=1}^r  \ell_{i_1}(w_{i_1,j})\cdots \ell_{i_s}(w_{i_s,j}) \medotimes_{1 \le i \le k,\;i \not\in \{i_1,\dots,  i_s\}} w_{i,j}.
\]
Moreover,  for any permutation $\sigma \in \mathfrak{S}_k$,  we identify $\medotimes_{i=1}^k \mathbb{W}_{\sigma(i)}$ with $\medotimes_{i=1}^k \mathbb{W}_i$ so that $w_{\sigma(1)} \otimes \cdots \otimes w_{\sigma(k)}$ is understood as $w_1 \otimes \cdots \otimes w_k$,  and this identification extends linearly.

\begin{definition}[Ranks of tensors]\label{def:rank}
Let $\mathbb{W}_1,\dots,  \mathbb{W}_k$ be vector spaces over $\mathsf{K}$.  Given $T \in \mathbb{W}_1 \otimes \cdots \otimes \mathbb{W}_k$,  we define its \emph{geometric rank} as 
\[
\GR(T) \coloneqq \codim\{(\ell_{1},\dots, \ell_{k-1})\in \mathbb{W}_1^\ast \times \cdots \times \mathbb{W}_{k-1}^\ast: 
\langle T,  \ell_1 \otimes \cdots \otimes \ell_{k-1} \rangle = 0\}.   
\]
A tensor of the form $S_1 \otimes S_2 \in \mathbb{W}_1 \otimes \cdots \otimes \mathbb{W}_k$ has partition rank one if $S_1 \in \mathbb{W}_{\sigma(1)} \otimes \cdots \mathbb{W}_{\sigma(p)}$,  $S_2 \in \mathbb{W}_{\sigma(p+1)} \otimes \cdots \mathbb{W}_{\sigma(k)}$ for some $\sigma \in \mathfrak{S}_k$ and $1 \le p \le k-1$.  The \emph{partition rank} of $T$ is defined by 
\[
\pr(T) \coloneqq \min \left\lbrace
r \in \mathbb{N}: T = \sum_{j=1}^r T_j,\; T_j \text{~has partition rank one}
\right\rbrace. 
\]
We denote by $I_r \in  (\mathsf{K}^r)^{\otimes k}$ the identity tensor of order $k$ and dimension $r$.  The \emph{subrank} of $T$ is
\[
\Q(T) \coloneqq \max \left\lbrace
r \in \mathbb{N}:  I_r \unlhd T
\right\rbrace.
\]
If $\mathsf{K}$ is a finite field,  the analytic rank of $T$ is
\[
\AR(T) \coloneqq \sum_{i=1}^{k-1} \dim \mathbb{W}_i - \log_{|\mathsf{K}|} \left\lvert
(\ell_1,\dots,  \ell_{k-1}) \in \mathbb{W}_1^\ast \times \cdots \mathbb{W}_{k-1}^\ast: \langle T,  \ell_1 \otimes \cdots \otimes \ell_{k-1} \rangle = 0
\right\rvert 
\]
\end{definition}
We remark that the four ranks in Definition~\ref{def:rank},  which may appear quite different at first glance, are in fact closely related. They will play a crucial role in our study of the isotropy and completeness indices to be defined in Section~\ref{sec:indices}.  For ease of reference,  we record some existing results below. 
\begin{lemma}[PR vs.  GR]
\label{prvsgr}
For any order $k$ tensor $T$ over a field $\mathsf{K}$,  we have the following:
\begin{enumerate}[label=(\alph*)]
\item \cite[Theorem~1]{cohen2021structure} If $\mathsf{K}$ is perfect and $k = 3$,  then $\PR(T) \asymp \GR(T)$.
\item \cite[Corollary~3]{cohen2023partition} If $\mathsf{K}$ is algebraically closed,  then $\PR(T) \asymp_k  \GR(T)$.
\item \cite[Theorem~2.3]{baily2024strength} If $\mathsf{K}$ is infinite,  then $\PR(T) \asymp_{k,n}  \GR(T)$ where $n$ is the transcendence degree of $\mathsf{K}$ over its prime field.
\item \cite[Theorem~1.6.3]{bik2025strength} If $\mathsf{K}$ is infinite,  then $\PR(T) \lesssim_k \GR(T)^{k-1}$.  
\end{enumerate}
\end{lemma}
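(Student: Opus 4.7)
The plan is to recognize Lemma~\ref{prvsgr} as a compilation of four distinct results attributed to different papers, so that a self-contained proof would simply invoke the cited references. Nevertheless, I sketch the unifying idea. Define $Z(T) \coloneqq \{(\ell_1,\dots,\ell_{k-1}) \in \mathbb{V}_1^\ast \times \cdots \times \mathbb{V}_{k-1}^\ast: \langle T,\ell_1\otimes\cdots\otimes\ell_{k-1}\rangle = 0\}$; by Definition~\ref{def:rank}, $\GR(T) = \codim Z(T)$. The core task is to convert this codimension statement into a partition-rank decomposition of $T$.

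For parts (a) and (b), I would argue by induction on the order $k$. The key move is this: if $\GR(T)$ is small, then $Z(T)$ has large dimension, and one can select a rational linear form $\ell_1$ either inside $Z(T)$ or in a generic locus on which the contracted tensor $\langle T,\ell_1\rangle$ has strictly smaller geometric rank. Writing $T = \ell_1 \otimes \langle T,\ell_1\rangle + R$ with a suitably chosen remainder $R$ and iterating yields the partition-rank bound. In part (a), the case $k = 3$ collapses the iteration to a single step and a slice-rank argument suffices, with perfectness ensuring the existence of rational zeros through a Noether-normalization-style reduction. Part (b) iterates this procedure over algebraically closed $\mathsf{K}$, using the Nullstellensatz to guarantee rational zeros at every stage; the factor $\asymp_k$ absorbs the accumulated constants.

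Parts (c) and (d) address infinite $\mathsf{K}$ that need not be algebraically closed, where $Z(T)$ may be sparse in $\mathsf{K}$-rational points. In (c), I would pass to the algebraic closure $\overline{\mathsf{K}}$, apply (b) to obtain a decomposition there, and then descend to $\mathsf{K}$ through a Galois-theoretic averaging whose cost is controlled by the transcendence degree $n$ of $\mathsf{K}$ over its prime field; the dependence on $n$ arises from taming denominators under the Galois action. Part (d) instead employs a modified induction in which each recursive step contracts against a \emph{generic} rational linear form rather than one vanishing on $T$; this incurs a multiplicative loss of order $\GR(T)$ per step and thereby produces the weaker exponent $k-1$, but completely avoids any appeal to rational zeros of $Z(T)$.

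The main obstacle to unifying or tightening these results is the arithmetic gap between the $\mathsf{K}$-points and the $\overline{\mathsf{K}}$-points of $Z(T)$: over non-closed fields, $\codim Z(T)$ can drastically overestimate the partition-rank content of $T$ because high-codimension varieties may carry very few rational points. This arithmetic mismatch is precisely what separates the sharp linear bound of (c) from the polynomial bound of (d), and closing it would require effective control on rational points of varieties that is not available in general. I expect this to be the principal technical bottleneck in any attempt to recover a single unified bound with field-independent constants.
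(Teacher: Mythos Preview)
The paper does not prove Lemma~\ref{prvsgr}; it is stated purely as a record of existing results, with each part attributed to the cited reference and no argument supplied. Your opening sentence correctly identifies this, and that alone matches the paper's treatment.

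The remainder of your proposal---the inductive sketches for (a)--(d)---goes beyond anything in the paper and is not required. While the high-level themes you invoke (induction on the order, passage to the algebraic closure, descent controlled by transcendence degree) are in roughly the right spirit, the specific mechanisms you describe do not closely track the actual proofs in the cited works. In particular, the identity $T = \ell_1 \otimes \langle T,\ell_1\rangle + R$ is not well-posed as written (the types do not match without further explanation), and the one-step ``contract against a linear form and recurse'' picture substantially oversimplifies the arguments of Cohen--Moshkovitz and of Bik--Draisma--Snowden. Since the paper simply cites these results, none of this affects the comparison: your first sentence is the correct and complete answer.
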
 
\begin{lemma}[PR vs. AR]\cite{moshkovitz2022quasi}\label{prvsar} 
For any tensor $T$ over a finite field $\mathsf{K}$ of order $k$,  we have $\PR(T) \lesssim_k   \AR(T) \log_{|\mathsf{K}|}(\AR(T))$.
\end{lemma}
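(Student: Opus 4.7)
The plan is to convert an analytic-rank bound into an explicit partition-rank decomposition by an iterated peeling argument, working with the character-sum/bias interpretation of $\AR$ that is implicit in the codimension definition given above. For the base case $k=2$ the claim is trivial, since both sides coincide with the matrix rank. For $k\ge 3$, I would induct on $k$: at each step fix an axis $i\in[k]$, regard $T$ as a linear map $\mathbb{W}_i^\ast\to\bigotimes_{j\ne i}\mathbb{W}_j$, pick a covector $\ell_i$ for which the slice $T_{\ell_i}=\langle T,\ell_i\rangle$ carries most of the analytic rank of $T$, and peel off the partition-rank-one summand $v_i\otimes T_{\ell_i}$, where $v_i$ is any vector paired with $\ell_i$ to $1$. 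The inductive hypothesis applied to each slice then provides a partition-rank decomposition of $T_{\ell_i}$ of size $\lesssim_{k-1}\AR(T_{\ell_i})\log_{|\mathsf{K}|}\AR(T_{\ell_i})$, which can be assembled on the outer axis.

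The crux, and the main obstacle, is a bias-loss lemma of the form $\AR(T-S)\ge \AR(T)-C_k\log_{|\mathsf{K}|}\AR(T)$ whenever $\PR(S)=1$. I would prove this by translating the statement into bias language $\operatorname{bias}(T)=\mathbb{E}_x\chi(T(x))$ and applying Cauchy--Schwarz (equivalently, a Gowers-type inner-product estimate) $O_k(1)$ times to relate $\operatorname{bias}(T-S)$ to $\operatorname{bias}(T)$ up to controlled multiplicative loss. Without this sharp input one only obtains polynomial bounds of the shape $\PR(T)\lesssim_k\AR(T)^{C_k}$; the quasi-linear improvement in the stated lemma relies precisely on the \emph{logarithmic} loss above, so that a single round of peeling only costs $O_k(\log\AR(T))$ analytic rank while removing one partition-rank-one piece.

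Putting these pieces together via a dyadic recursion—peel off partition-rank-one tensors greedily until $\AR$ halves, then repeat $O_k(\log\AR(T))$ times—produces a decomposition whose total length is $O_k(\AR(T)\log_{|\mathsf{K}|}\AR(T))$, matching the stated bound. Edge cases where $\AR(T)$ is $O_k(1)$ or $|\mathsf{K}|$ is very small can be absorbed into the constant hidden in $\lesssim_k$, and the inductive dependence on $k$ is controlled because each round of the outer induction introduces only an additive constant in the exponent of the implicit constant. Essentially all the technical weight of the proof lies in the bias-loss lemma; the remaining argument is careful bookkeeping on the recursion.
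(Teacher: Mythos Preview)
The paper does not give its own proof of this lemma; it is quoted as a known result from \cite{moshkovitz2022quasi}. Your outline, however, has a genuine gap at its core. The bias-loss inequality you propose, $\AR(T-S)\ge \AR(T)-C_k\log_{|\mathsf{K}|}\AR(T)$ for $\PR(S)=1$, points the wrong way for a peeling argument: it is a \emph{lower} bound on the analytic rank of the remainder (and in fact the sharper bound $\lvert\AR(T-S)-\AR(T)\rvert\lesssim \PR(S)$ is essentially immediate from subadditivity of analytic rank together with $\AR\le\PR$). To bound $\PR(T)$ from above by greedy peeling you would need the opposite direction---to \emph{produce} an $S$ with $\PR(S)=1$ and $\AR(T-S)\le \AR(T)-\delta$ for a controlled $\delta>0$---so that after enough rounds the remainder vanishes. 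No such decrement is guaranteed: for a typical high-analytic-rank tensor, removing any single partition-rank-one piece need not force $\AR$ to drop at all. Your dyadic recursion (``peel until $\AR$ halves'') therefore has no termination mechanism, and the bookkeeping you describe yields at best a \emph{lower} bound on the number of pieces, not the upper bound you want.

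The proofs in the literature do not work by one-at-a-time peeling. Moshkovitz (and earlier Janzer and Mili\'cevi\'c with weaker bounds) pass through an intermediate structural quantity that simultaneously upper-bounds the partition rank and can be controlled by the analytic rank up to the logarithmic factor; the $\log$ enters via a pigeonhole/density-increment step inside that comparison, not from a per-piece Cauchy--Schwarz estimate. The induction there is on the ambient dimension or on this auxiliary parameter, rather than on the order $k$ via slicing as you suggest.
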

\begin{lemma}[GR vs. AR]\cite{moshkovitz2024uniform, baily2024strength,chen2024stability}\label{grvsar}
For any tensor over a finite field of order $k$,  we have $ \AR(T) \asymp_k \GR(T)$.
\end{lemma}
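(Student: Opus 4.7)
The plan is to analyze both ranks through the defining zero-set
\[
V_T := \{(\ell_1,\dots,\ell_{k-1}) \in \mathbb{W}_1^\ast \times \cdots \times \mathbb{W}_{k-1}^\ast : \langle T, \ell_1 \otimes \cdots \otimes \ell_{k-1}\rangle = 0\}.
\]
Writing $n := \sum_{i=1}^{k-1} \dim \mathbb{W}_i$, the two definitions translate into
\[
\GR(T) = n - \dim V_T, \qquad \AR(T) = n - \log_{|\mathsf{K}|} |V_T(\mathsf{K})|,
\]
so the desired comparison $\AR(T) \asymp_k \GR(T)$ is equivalent to a uniform two-sided estimate of the form $\log_{|\mathsf{K}|}|V_T(\mathsf{K})| \asymp_k \dim V_T$, up to additive errors depending only on $k$.

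For the direction $\AR(T) \lesssim_k \GR(T)$, the approach is to apply a Schwartz--Zippel / Lang--Weil upper bound $|V_T(\mathsf{K})| \le \deg(V_T)\cdot |\mathsf{K}|^{\dim V_T}$. Since $V_T$ is cut out by $\dim \mathbb{W}_k$ multilinear polynomials of total degree $k-1$, one restricts attention to its top-dimensional components and invokes a uniform-in-$k$ degree bound to conclude that $\log_{|\mathsf{K}|}|V_T(\mathsf{K})| \le \dim V_T + O_k(1)$, hence $\AR(T) \ge \GR(T) - O_k(1)$.

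For the reverse direction $\GR(T) \lesssim_k \AR(T)$, the plan is to secure a matching lower bound $|V_T(\mathsf{K})| \ge |\mathsf{K}|^{\dim V_T}/C_k$. Over sufficiently large fields, this follows from the Lang--Weil estimate applied to a geometrically irreducible $\mathsf{K}$-defined component of $V_T$; for small finite fields one passes to an extension $\mathbb{F}_{q^t}$, exploiting that $\GR$ is stable under base change while $\AR$ is essentially preserved up to factors of $t$, and then transferring back. Combined with the first step, this sandwiches $\AR(T)$ between constant multiples of $\GR(T)$.

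The principal obstacle is ensuring that every geometric-complexity constant is uniform in $k$ alone, independent of the ambient dimensions $\dim \mathbb{W}_i$: a naive Bezout bound on $\deg V_T$ is exponential in codimension and is therefore useless. The precise input supplied by \cite{moshkovitz2024uniform}, \cite{baily2024strength} and \cite{chen2024stability} is exactly such a uniform control on the geometry of tensor zero-sets---either via a structural decomposition of $V_T$ into pieces of $k$-bounded degree, or via refined point-counting estimates tailored to multilinear varieties---which then plugs into the Lang--Weil machinery above to yield the final $k$-dependent constants.
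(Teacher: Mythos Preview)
The paper does not supply its own proof of this lemma: it is stated as a citation of \cite{moshkovitz2024uniform,baily2024strength,chen2024stability} and used as a black box (see the proof of Proposition~\ref{main theorem 2}, which invokes \cite[Proposition~4.4]{chen2024stability} and \cite[Theorem~3]{moshkovitz2024uniform} directly). So there is no in-paper argument to compare against; your sketch is being measured against the literature rather than the present text.

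That said, your write-up has the two directions labeled backwards. An \emph{upper} bound $|V_T(\mathsf{K})|\le C_k\,|\mathsf{K}|^{\dim V_T}$ yields $\AR(T)\ge \GR(T)-\log_{|\mathsf{K}|}C_k$, i.e.\ the inequality $\GR(T)\lesssim_k \AR(T)$, not $\AR\lesssim_k\GR$ as you announce; and conversely the Lang--Weil \emph{lower} bound on $|V_T(\mathsf{K})|$ yields $\AR(T)\le \GR(T)+O_k(1)$. Swap the two headers and the logic is consistent.

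On substance, your outline is accurate at the level of a plan: both directions do reduce to two-sided point-count estimates on $V_T$, and you correctly isolate the genuine obstruction, namely that a naive B\'ezout bound on $\deg V_T$ scales with $\dim\mathbb{W}_k$ rather than with $k$. What you have written is therefore not a proof but a reduction to exactly the content of the cited papers, which is also what the paper itself does. One additional wrinkle worth flagging in your Lang--Weil step: a top-dimensional geometrically irreducible component of $V_T$ need not itself be defined over $\mathsf{K}$ (Galois may permute components), so the lower-bound argument requires either working with a full Galois orbit of components or the more refined multilinear point-counting inputs from the references; this is another place where the uniformity-in-$k$ work is hidden.
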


\subsection{Algebraic geometry over finite fields}
Let $\mathsf{K}$ be a field and let $f \in \mathsf{K}[x_{1},\dots,x_N]$ be a polynomial.  For each $1 \le i \le N$,  we denote the degree of $f$ with respect to the variable $x_i$ by $\deg_{x_i} f$.  We also denote by $V(f)$ the zero set of $f$ in $\mathsf{K}^N$. 
\begin{lemma}[Combinatorial Nullstellensatz]\cite[Lemma~2.1]{alon1999combinatorial}\label{comb null}
Let $\mathsf{K}$ be a field and let $f \in \mathsf{K}[x_{1},\dots,x_N]$ be a nonzero polynomial.  For any subsets $S_1,  \dots,  S_N \subseteq \mathsf{K}$ such that $\vert S_{i} \vert> \deg_{x_{i}} f$ where $1 \le i \le N$,  we have $f(a) \ne 0$ for some $a \in S_{1}\times\cdots \times S_{N}$.
\end{lemma}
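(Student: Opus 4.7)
The plan is to prove this by induction on the number of variables $N$, which is the standard route for the Combinatorial Nullstellensatz.

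For the base case $N=1$, the polynomial $f(x_1)$ is a nonzero univariate polynomial of degree $\deg_{x_1} f < |S_1|$, so it has at most $\deg_{x_1} f$ roots in $\mathsf{K}$. Since $|S_1|$ strictly exceeds this count, there must exist $a_1 \in S_1$ with $f(a_1) \ne 0$.

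For the inductive step, I would write
\[
f(x_1, \dots, x_N) = \sum_{j=0}^{t} f_j(x_2, \dots, x_N)\, x_1^j, \qquad t = \deg_{x_1} f,
\]
and let $j^\ast$ be the largest index with $f_{j^\ast} \ne 0$. Since $\deg_{x_i} f_{j^\ast} \le \deg_{x_i} f < |S_i|$ for every $2 \le i \le N$, the inductive hypothesis applied to $f_{j^\ast}$ yields $(a_2, \dots, a_N) \in S_2 \times \cdots \times S_N$ with $f_{j^\ast}(a_2, \dots, a_N) \ne 0$. Substituting this tuple into $f$ produces a univariate polynomial $g(x_1) \coloneqq f(x_1, a_2, \dots, a_N)$ whose coefficient of $x_1^{j^\ast}$ is nonzero, so $g$ is a nonzero polynomial of degree at most $t < |S_1|$. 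Applying the base case to $g$ then yields $a_1 \in S_1$ with $g(a_1) = f(a_1, \dots, a_N) \ne 0$.

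The only delicate point is choosing which coefficient $f_j$ to feed into the induction. If one naively picks an arbitrary $j$ with $f_j \ne 0$, it is conceivable that evaluating the other $f_{j'}$ at $(a_2, \dots, a_N)$ could change the $x_1$-degree in an unhelpful way. Selecting the \emph{top} index $j^\ast$ sidesteps this by guaranteeing that the leading-in-$x_1$ coefficient of the specialized polynomial survives. Beyond this bookkeeping, no genuine obstacle arises, and the degree bound $t < |S_1|$ is precisely what makes the base case applicable.
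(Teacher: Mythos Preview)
Your proof is correct and is essentially the standard induction argument for this version of the Combinatorial Nullstellensatz. The paper does not give its own proof of this lemma; it simply quotes the result from \cite{alon1999combinatorial}, so there is nothing further to compare. One minor remark: since you set $t=\deg_{x_1} f$, the coefficient $f_t$ is already nonzero by definition, so your $j^\ast$ is just $t$ and the ``delicate point'' you flag is not really an issue.
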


\begin{lemma}[Generalized Schwarz-Zippel lemma]\cite[Theorem~5]{Alon1993covering}\label{alon-furedi bound} 
Let $\mathsf{K}$ be a field.  Suppose that $Y_1,  \dots,  Y_N \subseteq \mathsf{K}$ are finite subsets with cardinality $r_1,\dots,  r_N$ respectively.  If $f \in \mathsf{K}[x_1,\dots,  x_N]$ is a degree $d$ polynomial that is not identically zero on $Y \coloneqq Y_1 \times \cdots \times Y_N \subseteq \mathsf{K}^N$,  then
\[
 \lvert Y \setminus V(f)  \rvert  \ge 
\min  \left\lbrace
z_1 \cdots z_N: z_i \in [r_i], \; i \in [N],  \;  z_1 + \cdots + z_N = r_1 + \cdots + r_N - d
\right\rbrace.
\]
\end{lemma}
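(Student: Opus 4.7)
The plan is to prove the bound by induction on the number of variables $N$, after a preprocessing step that normalizes the per-variable degrees of $f$. The base case $N = 1$ is immediate: a nonzero polynomial of degree at most $d$ in $\mathsf{K}[x_1]$ has at most $d$ roots, so it has at least $r_1 - d$ nonzero values on $Y_1$, which matches the unique feasible point $z_1 = r_1 - d$ of the minimum.

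For the inductive step, first replace $f$ by its reduction $\tilde{f}$ modulo the vanishing polynomials $\prod_{y \in Y_i}(x_i - y)$, $i \in [N]$. This reduction leaves $f|_Y$ unchanged, and any monomial $x_1^{\alpha_1}\cdots x_N^{\alpha_N}$ with $\alpha_i \ge r_i$ is rewritten as a polynomial whose monomials all have strictly smaller total degree, so the total degree cannot increase; after the reduction $\deg_{x_i} \tilde{f} < r_i$ for every $i \in [N]$, and $\tilde{f} \ne 0$ since $f|_Y \ne 0$. Now let $e = \deg_{x_N} \tilde{f} < r_N$ and write $\tilde{f} = \sum_{j=0}^{e} g_j(x_1, \ldots, x_{N-1})\, x_N^j$ with $g_e \ne 0$ as a polynomial. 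Then $\deg g_e \le \deg \tilde{f} - e$ and $\deg_{x_i} g_e < r_i$ for $i \in [N-1]$, so Lemma~\ref{comb null} forces $g_e$ to be nonzero at some point of $Y_1 \times \cdots \times Y_{N-1}$. Applying the inductive hypothesis to $g_e$ bounds the number of such points from below by $\min\{z_1 \cdots z_{N-1} : z_i \in [r_i],\, \sum_{i < N} z_i = \sum_{i < N} r_i - (\deg \tilde{f} - e)\}$. At every such point $\tilde{f}(\cdot, x_N)$ is a nonzero degree-$e$ univariate polynomial in $x_N$, hence nonzero on at least $r_N - e \ge 1$ points of $Y_N$. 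Multiplying these two counts and setting $z_N := r_N - e$ produces a feasible tuple $(z_1, \ldots, z_N)$ for the $N$-variable minimum with parameter $\deg \tilde{f}$, yielding the desired bound for $\tilde{f}$; monotonicity of the minimum in the degree parameter converts this into the claimed bound for the original $d \ge \deg \tilde{f}$.

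The main obstacle is the coordinated degree bookkeeping. Three distinct notions of degree must be controlled in parallel: the total degree (which governs the conclusion), the per-variable degrees (which control the applicability of Lemma~\ref{comb null} and are needed to ensure $g_e$ does not vanish identically on the smaller grid), and the singled-out degree $e = \deg_{x_N} \tilde{f}$ (which determines the univariate non-root count $r_N - e$). The preprocessing step is what links them: without $e < r_N$ we cannot guarantee even a single univariate non-root, and without $\deg_{x_i} g_e < r_i$ for $i < N$ the leading coefficient could vanish on the entire smaller grid despite being nonzero as a polynomial, breaking the induction. Verifying that reduction modulo $\prod_{y \in Y_i}(x_i - y)$ strictly lowers the total degree of any monomial violating the per-variable bounds, and that $g_e$ inherits those bounds from $\tilde{f}$, is therefore the crux that makes the induction go through.
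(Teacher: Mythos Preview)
The paper does not supply its own proof of this lemma; it is quoted verbatim from \cite[Theorem~5]{Alon1993covering} and used as a black box. Your argument is correct and is essentially the standard proof of the Alon--F\"uredi bound: reduce modulo the grid polynomials $\prod_{y\in Y_i}(x_i-y)$ to force $\deg_{x_i}\tilde f<r_i$, expand in the last variable, apply induction to the leading coefficient $g_e$, and count univariate non-roots on each fiber where $g_e$ does not vanish. One small point worth making explicit: when you invoke the inductive hypothesis with parameter $\deg\tilde f - e$ rather than $\deg g_e$, feasibility of the $(N-1)$-variable constraint set is needed. This holds because any monomial of $\tilde f$ realizing the total degree has $x_N$-exponent at most $e$ and $x_i$-exponent at most $r_i-1$ for $i<N$, whence $\deg\tilde f - e \le \sum_{i<N}(r_i-1)$; it would strengthen the write-up to say so.
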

If we take $Y_1 = \cdots = Y_N \subseteq \mathsf{K} = \mathsf{F}_q$ and assume $d < q$,  then Lemma~\ref{alon-furedi bound} reduces to a weaker version of the celebrated Schwartz-Zippel Lemma,  which is reproduced below for easy reference.   
\begin{lemma}[Schwartz-Zippel lemma]\cite{Schwartz80,Zippel79}
\label{Schwartz-Zippel lemma}
Let $R$ be an integral domain and let $S \subseteq R$ be a finite subset.  If $f\in R[x_{1},\dots,x_N]$ is nonzero and has degree $d$,  then $\vert V(f) \cap S^N \vert \le d |S|^{N-1}$.  Here $V(f)$ denotes the zero set of $f$ in $R^N$.
\end{lemma}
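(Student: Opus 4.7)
The plan is to proceed by induction on the number of variables $N$. The base case $N = 1$ amounts to the classical fact that a nonzero polynomial $f \in R[x_1]$ of degree $d$ has at most $d$ roots in the integral domain $R$. I would prove this by a sub-induction on $d$: if $a \in R$ is a root of $f$, then polynomial division (valid since we factor out a monic linear factor) yields $f(x_1) = (x_1 - a)\, g(x_1)$ with $\deg g = d - 1$, and any other root $b$ of $f$ satisfies $(b - a)\, g(b) = 0$, so $g(b) = 0$ because $R$ has no zero divisors; the sub-inductive hypothesis applied to $g$ then gives at most $d - 1$ roots of $g$, hence at most $d$ roots of $f$.

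For the inductive step, I would regard $f$ as a polynomial in $x_1$ with coefficients in $R[x_2, \ldots, x_N]$ and write
\[
f(x_1, \ldots, x_N) = \sum_{i=0}^{k} f_i(x_2, \ldots, x_N)\, x_1^i,
\]
where $k$ is the largest index such that $f_k$ is nonzero as a polynomial; note $k \le d$ and $\deg f_k \le d - k$. The key move is to partition $S^N$ according to whether the tuple $(a_2, \ldots, a_N) \in S^{N-1}$ is a zero of $f_k$. In the first part, the inductive hypothesis applied to $f_k \in R[x_2,\dots,x_N]$ bounds the number of such tuples by $(d - k)\lvert S \rvert^{N - 2}$; allowing any of the $\lvert S\rvert$ choices of $a_1$, this contributes at most $(d - k)\lvert S\rvert^{N-1}$ zeros of $f$. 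In the second part, whenever $f_k(a_2, \ldots, a_N) \neq 0$, the specialization $f(x_1, a_2, \ldots, a_N) \in R[x_1]$ is a nonzero univariate polynomial of degree exactly $k$, so by the base case it has at most $k$ roots in $S$; summing over the at most $\lvert S \rvert^{N-1}$ remaining tuples contributes at most $k\lvert S \rvert^{N-1}$ zeros. Combining the two parts yields the desired bound $d\lvert S \rvert^{N-1}$.

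The argument is entirely elementary and I do not expect any serious obstacle beyond careful bookkeeping; the only genuine use of the integral-domain hypothesis is in passing from $(b-a)\, g(b) = 0$ to $g(b) = 0$ in the base case, which is exactly what prevents the naive proof from extending to an arbitrary commutative ring. As an alternative route, one could instead deduce the conclusion from Lemma~\ref{alon-furedi bound}, after first invoking the combinatorial Nullstellensatz (Lemma~\ref{comb null}) to ensure that $f$ is not identically zero on $S^N$ in the only nontrivial range $d < \lvert S \rvert$; taking $r_1 = \cdots = r_N = \lvert S \rvert$, the minimum of $z_1 \cdots z_N$ on the lattice simplex $\sum_i z_i = N\lvert S \rvert - d$ is achieved by the unbalanced choice $z_1 = \lvert S \rvert - d$ and $z_i = \lvert S \rvert$ for $i \ge 2$, giving $\lvert S^N \setminus V(f)\rvert \ge (\lvert S \rvert - d)\lvert S \rvert^{N-1}$, which is equivalent to the claim.
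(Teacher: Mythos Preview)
Your proposal is correct. The paper does not supply its own proof of this lemma; it is quoted with a citation to the original sources and recorded ``for easy reference.'' Your induction on $N$ is the standard argument and goes through without issue. Your alternative via Lemma~\ref{alon-furedi bound} is exactly the derivation the paper alludes to in the sentence preceding the statement; the only wrinkle is that Lemmas~\ref{comb null} and~\ref{alon-furedi bound} are stated over a field rather than an arbitrary integral domain, so to make that route fully rigorous in the generality claimed you should first pass to $\Frac(R)$, which is harmless since $S^N \subseteq R^N \subseteq \Frac(R)^N$ and zeros are counted the same way.
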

\section{Isotropy index and completeness index}\label{sec:indices}
In this section,  we define,  for each of alternating,  symmetric and general multilinear maps,  the isotropy index and the completeness index.  Given $F \in \Hom(\mathbb{V}_1 \times \cdots \times \mathbb{V}_{d},  \mathbb{W})$ and a subspace $\mathbb{U} \subseteq \mathbb{V}_1 \times \cdots \times \mathbb{V}_{d}$,  we denote by $F|_{\mathbb{U}} \in \Hom(\mathbb{U},\mathbb{W})$ the restriction of $F$ on $\mathbb{U}$.  We define 
\begin{equation}\label{eq:kerF}
\Ker(F) \coloneqq \lbrace 
T \in \mathbb{V}_1 \otimes \cdots \otimes \mathbb{V}_{d}: F(T) = 0
\rbrace,
\end{equation}
where $F$ is viewed as an element in $\Hom(\mathbb{V}_1 \otimes \cdots \otimes \mathbb{V}_{d},  \mathbb{W})\simeq  \Hom(\mathbb{V}_1 \times \cdots \times \mathbb{V}_{d},  \mathbb{W})$. 
\begin{definition}[Isotropy and completeness indices]\label{def:index}
Let $\mathbb{V}_1,\dots,  \mathbb{V}_d,  \mathbb{W}$ be vector spaces over $\mathsf{K}$ and let $F \in \Hom(\mathbb{V}_1 \times \cdots \times \mathbb{V}_{d},\mathbb{W})$. 
\begin{enumerate}[label=(\alph*)]
\item A \emph{totally isotropic subspace} of $F$ is a space $\mathbb{U}_1 \times \cdots \times \mathbb{U}_{d}$ such that $F|_{\mathbb{U}_1 \times \cdots \times \mathbb{U}_{d}} = 0$,  where $\mathbb{U}_1 \subseteq \mathbb{V}_1,\dots,  \mathbb{U}_{d} \subseteq \mathbb{V}_{d}$ are linear subspaces of the same dimension. The \emph{isotropy index} of $F$ is defined as 
\[
\alpha(F) \coloneqq \max \left\lbrace
s: F|_{\mathbb{U}_1 \times \cdots \times \mathbb{U}_{d}} = 0,\; \dim \mathbb{U}_1 = \cdots = \dim \mathbb{U}_{d} = s
\right\rbrace.
\]
If $\mathbb{V}_1 = \cdots = \mathbb{V}_{d} = \mathbb{V}$, and $F \in \Alt^{d}(\mathbb{V},\mathbb{W})$ or $F \in \Sym^{d}(\mathbb{V},\mathbb{W})$,  then a \emph{totally isotropic subspace} of $F$ is a subspace $\mathbb{U}^{d} \subseteq \mathbb{V}^{d}$ such that $F|_{\mathbb{U}^{d}} = 0$.  The \emph{isotropy index} of $F$ is $\max \left\lbrace
\dim \mathbb{U}: F|_{\mathbb{U}^{d}} = 0,\;  \mathbb{U} \subseteq \mathbb{V}
\right\rbrace$,  respectively denoted by $\alpha_{\w}(F)$ and $\alpha_{\s}(F)$. 

\item A \emph{complete subspace} of $F$ is a space $\mathbb{U}_1 \times \cdots \times \mathbb{U}_{d}$ such that $\Ker (F|_{\mathbb{U}_1 \times \cdots \times \mathbb{U}_{d}} ) = \{0\}$,  where $\mathbb{U}_1 \subseteq \mathbb{V}_1,\dots,  \mathbb{U}_{d} \subseteq \mathbb{V}_{d}$ are linear subspaces of the same dimension.  The \emph{completeness index} of $F$ is 
\[
\beta(F) \coloneqq \max \left\lbrace
s: \Ker(F|_{\mathbb{U}_1 \times \cdots \times \mathbb{U}_{d}}) = \{0\},\; \dim \mathbb{U}_1 = \cdots = \dim \mathbb{U}_{d} = s
\right\rbrace.
\]
If $\mathbb{V}_1 = \cdots = \mathbb{V}_{d} = \mathbb{V}$,  and $F \in \Alt^d(\mathbb{V},\mathbb{W})$ or $F \in \Sym^d (\mathbb{V},\mathbb{W})$,  then a \emph{complete subspace} of $F$ is a subspace $\mathbb{U}^{d} \subseteq \mathbb{V}^{d}$ such that $\Ker(F|_{\mathbb{U}^{d}}) = 0$.  The \emph{completeness index} of $F$ is respectively defined as 
\begin{align*}
\beta_{\w}(F) &\coloneqq \max \left\lbrace
\dim \mathbb{U}: \Ker(F|_{\mathbb{U}^{d}}) \cap \Lambda^{d} \mathbb{V} = \{0\},\;  \mathbb{U} \subseteq \mathbb{V}
\right\rbrace,  \\
\beta_{\s}(F) &\coloneqq \max \left\lbrace
\dim \mathbb{U}: \Ker(F|_{\mathbb{U}^{d}}) \cap \mathrm{S}^{d} \mathbb{V} = \{0\},\;  \mathbb{U} \subseteq \mathbb{V}
\right\rbrace.
\end{align*}
\end{enumerate}
\end{definition}
It is worth noticing that the isotropy and completeness indices quantify two extremal structures of multilinear maps.  For instance,  $\alpha(F)$ measures the size of subspaces of the form $\mathbb{U}_1 \otimes \cdots \otimes \mathbb{U}_d \subseteq \mathbb{V}_1 \otimes \cdots \otimes \mathbb{V}_d$ where $\dim \mathbb{U}_1 = \cdots = \mathbb{U}_d$,  such that $\mathbb{U}_1 \otimes \cdots \otimes \mathbb{U}_d \subseteq \Ker(F)$.  By contrast,  $\beta(F)$ measures the size of subspaces of the same form such that $\mathbb{U}_1 \otimes \cdots \otimes \mathbb{U}_d \cap  \Ker(F) = \{0\}$.
\begin{remark}
The isotropy and completeness indices have been extensively studied for $d \le 2$:
\begin{enumerate}[label = (\alph*)]
\item For each $\varphi \in  \Hom(\mathbb{V},\mathbb{W})$,  we clearly have $\beta(\varphi) = \rank (\varphi)$.
\item For $d = 2$ and $\mathbb{W} = \mathsf{K}$,  both indices are well-understood in linear algebra \cite{MH73,Lang02}.  
\item The isotropy index plays an essential role in the study of alternating multilinear maps,  and has been coined with different names in various contexts \cite{Atkinson73, buhler1987isotropic,  Dav1992number,DS10,BMW17,qiao2020tur,BCGQS21}. 
\item Given $F \in \Alt^{d}(\mathbb{V}, \mathbb{W})$,  it is straightforward to verify that $\mathbb{U} \subseteq \mathbb{V}$ is a complete subspace of $F$ if and only if $\dim \left( \spa_{\mathsf{K}}\im \left( F|_{\mathbb{U}^{d}} \right) \right) = \binom{s}{d}$.  Thus,  our definition of complete subspaces coincides with the one in \cite[Subsection~2.4]{qiao2020tur} when $d = 2$. 
\end{enumerate}
\end{remark}

Let $R \coloneqq \mathsf{K}[x_1,\dots,  x_m]$ be the polynomial ring over $\mathsf{K}$ with $m$ variables.  We denote by $R_k$ the subspace of $R$ consisting of degree $k$ homogeneous polynomials.  Suppose either $\ch(\mathsf{K}) = 0$ or $\ch(\mathsf{K}) > d$.  We recall that symmetric $d$-multilinear maps are in $1$--$1$ correspondence with degree $d$ homogeneous polynomials via the polarization map \cite[Chapter~3, Section~2]{procesi2007lie}:
\begin{equation}\label{eq:polar}
\mathcal{P}: \Sym^d(\mathsf{K}^m,  \mathsf{K}) \to R_d,\quad \mathcal{P}(F)(x_1,\dots,  x_n) \coloneqq F(x,\dots,  x),
\end{equation}
where $x \coloneqq (x_1,\dots,  x_m)$.  That being so,  every $F  \in \Sym^d(\mathsf{K}^m,  \mathsf{K}^n)$ uniquely determines a subspace $\mathbb{L}_F \subseteq R_d$.  Indeed,  if we write $F = (F_1,\dots,  F_n)$ where $F_1,\dots,  F_n \in \Sym^d(\mathsf{K}^m,  \mathsf{K})$,  then
\begin{equation}\label{eq:LF}
\mathbb{L}_F = \spa \lbrace \mathcal{P}(F_1),\dots,  \mathcal{P}(F_n) \rbrace,  \quad \dim \mathbb{L}_F = n.
\end{equation}
The connection between $F$ and $\mathbb{L}_F$ leads to the following simple observation,  which is recorded here for subsequent reference.
\begin{proposition}[Isotropy and completeness indices for polynomial subspace]\label{lem:poly=sym}
Let $\mathbb{V},  \mathbb{W}$ be vector spaces over a field $\mathsf{K}$ and let $R$ be the ring of polynomials with $m \coloneqq \dim \mathbb{V}$ variables.   Suppose either $\ch(\mathsf{K}) = 0$ or $\ch(\mathsf{K}) > d$.  Given $F \in \Sym^d(\mathbb{V},  \mathbb{W})$ and a subspace $\mathbb{U} \subseteq \mathbb{V}$,  
\begin{enumerate}[label = (\alph*)]
\item $\mathbb{U}^d$ is a totally isotropic subspace of $F$ if and only if $\mathbb{L}_F \subseteq \mathfrak{a}(\mathbb{U})_d$,  which is further equivalent to $\mathfrak{a}_F \subseteq \mathfrak{a}(\mathbb{U})^d$.
\item $\mathbb{U}^d$ is a complete subspace of $F$ if and only if the quotient map $\pi: R_d \to (R/\mathfrak{a}(\mathbb{U}))_d$ is surjective on $\mathbb{L}_{F}$.
\end{enumerate}
In particular,  we have 
\begin{align*}
\alpha_{\s} (F) = \alpha_{\p} (\mathbb{L}_F) &\coloneqq \max \left\lbrace
s \in \mathbb{N}: \mathbb{L}_{F} \subseteq  (\ell_1,\dots,  \ell_{m-s})_d  \text{~for some~}\ell_1,\dots,  \ell_{m-s} \in R_1
\right\rbrace.   \\
&= \max \left\lbrace
s \in \mathbb{N}: \mathfrak{a}_F \subseteq  (\ell_1,\dots,  \ell_{m-s})^d  \text{~for some~}\ell_1,\dots,  \ell_{m-s} \in R_1
\right\rbrace.  \\
\beta_{\s} (F) = \beta_{\p} (\mathbb{L}_F) &\coloneqq  \max \left\lbrace
s \in \mathbb{N}: \pi(\mathbb{L}_{F}) = (R/(\ell_1,\dots,  \ell_{m-s}))_d  \text{~for some~}\ell_1,\dots,  \ell_{m-s} \in R_1
\right\rbrace.
\end{align*}
Here $\mathfrak{a}(\mathbb{U})$ is the defining ideal of  $\mathbb{U}$,  $\mathfrak{a}_F$ is the ideal generated by $\mathbb{L}_F$,  $\mathfrak{m}$ is the maximal ideal of $R$,  and $R_k$ (resp.  $\mathfrak{a}_d$) denotes the degree $k$ part of $R$ (resp. $\mathfrak{a}$) for each $k \in \mathbb{N}$.  
\end{proposition}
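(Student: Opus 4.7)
The plan is to work through the polarization isomorphism $\mathcal{P}: \Sym^d(\mathbb{V}, \mathsf{K}) \xrightarrow{\sim} R_d$, which is available because $\ch(\mathsf{K}) = 0$ or $\ch(\mathsf{K}) > d$. Writing $F = (F_1, \ldots, F_n)$ with $F_j \in \Sym^d(\mathbb{V}, \mathsf{K})$, I identify $F$ with the polynomial subspace $\mathbb{L}_F = \spa\{\mathcal{P}(F_1), \ldots, \mathcal{P}(F_n)\} \subseteq R_d$. Under this dictionary, the conditions defining $\alpha_{\s}$ and $\beta_{\s}$ translate into containment/surjectivity statements about $\mathbb{L}_F$ relative to graded pieces of ideals cut out by linear forms. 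The ``in particular'' formulas will then follow from the bijection between $s$-dimensional linear subspaces $\mathbb{U} \subseteq \mathbb{V}$ and sets of $m-s$ linearly independent linear forms $\ell_1, \ldots, \ell_{m-s}$ generating $\mathfrak{a}(\mathbb{U})$.

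For (a), I would invoke the polarization identity to rewrite $F|_{\mathbb{U}^d} = 0$ as the vanishing of the diagonals $F_j(u, \ldots, u) = \mathcal{P}(F_j)(u)$ for every $u \in \mathbb{U}$ and every $j \in [n]$. Since each $\mathcal{P}(F_j)$ is homogeneous of degree $d$, its vanishing on the linear subspace $\mathbb{U}$ is equivalent to $\mathcal{P}(F_j) \in \mathfrak{a}(\mathbb{U})_d$, giving the first characterization $\mathbb{L}_F \subseteq \mathfrak{a}(\mathbb{U})_d$. Because $\mathfrak{a}_F$ is generated in degree $d$ by $\mathbb{L}_F$, containment of its generators in $\mathfrak{a}(\mathbb{U})_d$ coincides with the ideal-level condition $\mathfrak{a}_F \subseteq \mathfrak{a}(\mathbb{U})^d$, where $\mathfrak{a}(\mathbb{U})^d$ is read as the ideal generated by the degree $d$ component $\mathfrak{a}(\mathbb{U})_d$.

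For (b), I first observe that $\mathbb{U}^{\otimes d} \cap \mathrm{S}^d \mathbb{V} = \mathrm{S}^d \mathbb{U}$, so the kernel condition $\Ker(F|_{\mathbb{U}^d}) \cap \mathrm{S}^d \mathbb{V} = \{0\}$ is precisely the injectivity of the linear map $F|_{\mathrm{S}^d \mathbb{U}} : \mathrm{S}^d \mathbb{U} \to \mathbb{W}$. Dualizing, this becomes the surjectivity of the transpose map $\mathbb{W}^* \to (\mathrm{S}^d \mathbb{U})^*$. Under the canonical identifications $(\mathrm{S}^d \mathbb{V})^* \cong R_d$ and $(\mathrm{S}^d \mathbb{U})^* \cong (R/\mathfrak{a}(\mathbb{U}))_d$, the transpose of $F$ is a map $\mathbb{W}^* \to R_d$ whose image is exactly $\mathbb{L}_F$, and the transpose of the inclusion $\mathrm{S}^d \mathbb{U} \hookrightarrow \mathrm{S}^d \mathbb{V}$ is the quotient $\pi : R_d \twoheadrightarrow (R/\mathfrak{a}(\mathbb{U}))_d$. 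Surjectivity of the composite therefore reads $\pi(\mathbb{L}_F) = (R/\mathfrak{a}(\mathbb{U}))_d$, which is the desired characterization.

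The main point requiring care is the duality step in (b): I must verify that the inclusion of symmetric powers $\mathrm{S}^d \mathbb{U} \hookrightarrow \mathrm{S}^d \mathbb{V}$ really does dualize to the polynomial restriction $R_d \twoheadrightarrow (R/\mathfrak{a}(\mathbb{U}))_d$ under the polarization-based identifications, and this naturality relies on the characteristic hypothesis $\ch(\mathsf{K}) = 0$ or $\ch(\mathsf{K}) > d$. Once (a) and (b) are established, the three ``in particular'' formulas for $\alpha_{\s}(F)$ and $\beta_{\s}(F)$ follow immediately by substituting $\mathfrak{a}(\mathbb{U}) = (\ell_1, \ldots, \ell_{m-s})$ and maximizing over the dimension $s$.
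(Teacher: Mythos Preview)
Your argument is correct and is precisely the unpacking that the paper leaves implicit: the paper introduces Proposition~\ref{lem:poly=sym} as ``a simple observation'' following from the polarization correspondence and gives no proof, so your route through $\mathcal{P}$ and duality is exactly what is intended. One small remark on part~(a): your step from ``$\mathcal{P}(F_j)(u)=0$ for all $u\in\mathbb{U}$'' to ``$\mathcal{P}(F_j)\in\mathfrak{a}(\mathbb{U})_d$'' tacitly uses that a degree-$d$ polynomial vanishing on every $\mathsf{K}$-point of a linear space lies in its defining ideal, which holds here because the hypothesis $\ch(\mathsf{K})=0$ or $\ch(\mathsf{K})>d$ forces $|\mathsf{K}|>d$; alternatively you can bypass pointwise evaluation entirely by noting that restriction $\Sym^d(\mathbb{V},\mathsf{K})\to\Sym^d(\mathbb{U},\mathsf{K})$ corresponds under $\mathcal{P}$ to the quotient $R_d\to (R/\mathfrak{a}(\mathbb{U}))_d$, which is the same naturality you invoke in~(b).
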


In what follows,  we establish some basic properties of the isotropy and completeness indices. 
\begin{lemma}[Basic properties of isotropy and completeness indices]\label{lem:basic}
Given $d$-multilinear maps $F \in \Hom(\mathbb{V}_1 \times \cdots \times \mathbb{V}_{d},\mathbb{W})$ and $F' \in \Hom(\mathbb{V}'_1 \times \cdots \times \mathbb{V}'_{d},\mathbb{W}')$,  we have 
\begin{enumerate}[label = (\alph*)]
\item\label{lem:basic:item1} $\alpha(F) + \alpha(F') \le \alpha(F \oplus F')$.
\item\label{lem:basic:item2} $\alpha(F)^d \le \dim \left( \Ker (F) \right)$ and $\beta(F)^d  \le \dim \left( \spa F(\mathbb{V}_1 \times \cdots \times \mathbb{V}_d) \right)$.
\item\label{lem:basic:item3} $\max \{ \beta(F),  \beta(F')\} \le \beta(F \oplus F')$.
\end{enumerate}
Moreover,  if $\mathbb{V}_1 = \cdots = \mathbb{V}_d = \mathbb{V}$,  then  
\begin{enumerate}[label = (\alph*')]
\item\label{lem:basic:item1'}
$\alpha_{\w}(F) + \alpha_{\w}(F') = \alpha_{\w}(F \oplus F')$ if $F$ and $F'$ are alternating multilinear maps,  and $\alpha_{\s}(F)+\alpha_{\s}(F')=\alpha_{\s}(F \oplus F')$ if $F$ and $F'$ are symmetric multilinear maps.
\item\label{lem:basic:item2'} $\binom{\alpha_{\w}(F)}{d} \le \dim \left( \Ker (F) \cap \Lambda^d \mathbb{V} \right)$ and $\binom{\beta_{\w}(F)}{d} \le \dim \left( \spa F( \mathbb{V}^d ) \right)$ for $F \in \Alt^d(\mathbb{V},\mathbb{W})$; $\binom{\alpha_{\s}(F) + d  -1}{d} \le \dim \left( \Ker (F) \cap \mathrm{S}^d \mathbb{V} \right)$ and $\binom{\beta_{\s}(F) +d -1}{d}  \le \dim \left( \spa F( \mathbb{V}^d ) \right)$ for $F \in \Sym^d(\mathbb{V},\mathbb{W})$.
\end{enumerate}
\end{lemma}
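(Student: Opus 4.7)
The plan is to produce explicit witnesses for each inequality by direct constructions, using the natural identification
\[
(F \oplus F')(v_1 + v'_1, \dots, v_d + v'_d) = F(v_1, \dots, v_d) + F'(v'_1, \dots, v'_d) \in \mathbb{W} \oplus \mathbb{W}'
\]
that follows from viewing $F \oplus F'$ as a tensor in $\bigotimes (\mathbb{V}_i \oplus \mathbb{V}'_i)^\ast \otimes (\mathbb{W} \oplus \mathbb{W}')$ via the natural embeddings.

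For \ref{lem:basic:item1}, I would take totally isotropic subspaces $\mathbb{U}_i \subseteq \mathbb{V}_i$ and $\mathbb{U}'_i \subseteq \mathbb{V}'_i$ witnessing $\alpha(F)$ and $\alpha(F')$, and verify that $(\mathbb{U}_i \oplus \mathbb{U}'_i)_{i=1}^d$ is totally isotropic for $F \oplus F'$ with each factor of dimension $\alpha(F) + \alpha(F')$. For the isotropy halves of \ref{lem:basic:item2} and \ref{lem:basic:item2'}, the tensor subspace $\mathbb{U}_1 \otimes \cdots \otimes \mathbb{U}_d$ (respectively $\Lambda^d \mathbb{U}$, $\mathrm{S}^d \mathbb{U}$) lies in $\Ker(F)$ (respectively $\Ker(F) \cap \Lambda^d \mathbb{V}$, $\Ker(F) \cap \mathrm{S}^d \mathbb{V}$) and has the required dimension. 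For the completeness halves, I would pick a complete subspace of the appropriate dimension and invoke the fact that $F$ restricted to $\mathbb{U}_1 \otimes \cdots \otimes \mathbb{U}_d$ (respectively $\Lambda^d \mathbb{U}$, $\mathrm{S}^d \mathbb{U}$) is injective by definition, so its image sits inside $\spa F(\mathbb{V}_1 \times \cdots \times \mathbb{V}_d)$ (respectively $\spa F(\mathbb{V}^d)$) with the stated dimension. For \ref{lem:basic:item3}, embedding $\mathbb{U}_i$ into $\mathbb{V}_i \oplus \mathbb{V}'_i$ via the first summand works: any $T$ in the embedded tensor space with $(F \oplus F')(T) = 0$ has vanishing $\mathbb{W}$-component, i.e.\ $F(T) = 0$, forcing $T = 0$; symmetrically for $F'$.

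The single mildly delicate step is the $\le$ half of \ref{lem:basic:item1'}. The $\ge$ direction again follows by the construction above. For the reverse, I would take a maximal totally isotropic subspace $\mathbb{U} \subseteq \mathbb{V} \oplus \mathbb{V}'$ for $F \oplus F'$, let $\pi: \mathbb{V} \oplus \mathbb{V}' \to \mathbb{V}$ and $\pi': \mathbb{V} \oplus \mathbb{V}' \to \mathbb{V}'$ be the projections, and use the rank–nullity decomposition $\dim \mathbb{U} = \dim \pi(\mathbb{U}) + \dim (\mathbb{U} \cap \mathbb{V}')$. The key observation is that the $\mathbb{W}$- and $\mathbb{W}'$-components of $(F \oplus F')(u_1, \dots, u_d) = F(\pi u_1, \dots, \pi u_d) + F'(\pi' u_1, \dots, \pi' u_d)$ lie in independent summands of $\mathbb{W} \oplus \mathbb{W}'$, so each must vanish separately on $\mathbb{U}^d$. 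This makes $\pi(\mathbb{U})$ totally isotropic for $F$ (yielding $\dim \pi(\mathbb{U}) \le \alpha_{\w}(F)$) and $\mathbb{U} \cap \mathbb{V}'$ totally isotropic for $F'$ (yielding $\dim(\mathbb{U} \cap \mathbb{V}') \le \alpha_{\w}(F')$), so $\dim \mathbb{U} \le \alpha_{\w}(F) + \alpha_{\w}(F')$. Since $\pi$ and the restriction to $\mathbb{V}'$ preserve the alternating/symmetric structure, exactly the same argument handles the symmetric case. Apart from this independence-of-summands observation, every other part of the lemma is an immediate consequence of the definitions in Definition~\ref{def:index}.
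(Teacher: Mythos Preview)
Your proposal is correct and follows essentially the same approach as the paper: the paper also declares \ref{lem:basic:item2}, \ref{lem:basic:item3}, and \ref{lem:basic:item2'} to be immediate from the definitions, handles \ref{lem:basic:item1} and the $\ge$ direction of \ref{lem:basic:item1'} by the direct-sum construction you describe, and proves the $\le$ direction of \ref{lem:basic:item1'} via the projections $\pi,\pi'$ and the separation of the $\mathbb{W}$- and $\mathbb{W}'$-components. The only cosmetic difference is that for the last step the paper bounds $\dim\mathbb{U}\le \dim\pi(\mathbb{U})+\dim\pi'(\mathbb{U})$ using $\mathbb{U}\subseteq \pi(\mathbb{U})\oplus\pi'(\mathbb{U})$, whereas you use the rank--nullity identity $\dim\mathbb{U}=\dim\pi(\mathbb{U})+\dim(\mathbb{U}\cap\mathbb{V}')$; both yield the same conclusion.
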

\begin{proof}
It is clear that \ref{lem:basic:item2},  \ref{lem:basic:item3} and \ref{lem:basic:item2'} follow immediately from the definition.  To prove \ref{lem:basic:item1},  we denote by $\mathbb{U} \coloneqq \smallprod_{i=1}^d \mathbb{U}_i$ and $\mathbb{U}' \coloneqq \smallprod_{i=1}^d \mathbb{U}'_i$ maximal totally isotropic subspaces of $F$ and $F'$,  respectively.  Note that
\[
(F\oplus F') \left( v_1 \oplus v_1', \dots,  v_d \oplus v'_d \right) =F(v_1,\dots,  v_d) \oplus  F'(v'_1,\dots,  v'_d) = 0
\]
for any $\left( v_1 \oplus v_1', \dots,  v_d \oplus v'_d \right)  \in \smallprod_{i=1}^d (\mathbb{U}_i \oplus \mathbb{U}'_i)$.  Thus,  we have $\alpha (F) + \alpha(F') \le \alpha(F \oplus F')$.  Similarly,  we have $\alpha_{\w}(F) + \alpha_{\w}(F') \le \alpha_{\w}(F \oplus F')$ (resp.  $\alpha_{\s}(F) + \alpha_{\s}(F') \le \alpha_{\s}(F \oplus F')$) if $F \in \Alt^d (\mathbb{V},  \mathbb{W})$ and $F' \in \Alt^d (\mathbb{V}, ' \mathbb{W})$ (resp.  $F \in \Sym^d (\mathbb{V},  \mathbb{W})$ and $F' \in \Sym^d (\mathbb{V}, ' \mathbb{W})$ ).

Let $\pi:\mathbb{V}\oplus \mathbb{V}'\to \mathbb{V}$ and $\pi':\mathbb{V}\oplus \mathbb{V}'\to \mathbb{V}'$ be the natural projection maps.  Suppose that $(F\oplus F')|_{\mathbb{U}}=0$ and $\alpha_{\Lambda}(F\oplus F')= \dim \mathbb{U}$ for some $\mathbb{U}\subseteq \mathbb{V}\oplus\mathbb{V}'$.  This implies $F|_{\pi_{1}(\mathbb{U})}=0$ and $F'|_{\pi_{2}(\mathbb{U})}=0$.  Hence $\alpha_{\w}(F)\ge \dim \pi_{1}(\mathbb{U})$ and $\alpha_{\w}(F')\ge \dim \pi_{2}(\mathbb{U})$.  By definition,  it holds that $\mathbb{U}\subseteq \pi_{1}(\mathbb{U})\oplus \pi_{2}(\mathbb{U})$.  Thus,  we must have 
\[
\alpha_{\w}(F)+\alpha_{\w}(F') \ge \dim(\pi_{1}(\mathbb{U}))+ \dim(\pi_{2}(\mathbb{U}))\ge \dim \mathbb{U} = \alpha_{\Lambda}(F\oplus F'),
\]
from which we deduce $\alpha_{\w}(F) + \alpha_{\w}(F') = \alpha_{\w}(F \oplus F')$.  The same argument may apply to show the additivity of $\alpha_{\s}$.
\end{proof}
For special multilinear maps,  the isotropy and completeness indices can be computed explicitly.  Below we consider those arising from three familiar algebras: the split algebra,  the matrix algebra,  and the group algebra.  To this end, we will need the following observation.
\begin{lemma}\label{support set} 
Let $\mathbb{U}$ be an $s$-dimensional subspace of $\mathsf{K}^{n}$.  Then there is some $a = (a_1,\dots,  a_n) \in \mathbb{U}$ such that $s \le \left\lvert \left\lbrace  
i \in [n]: a_i \ne 0
\right\rbrace \right\rvert$.
\end{lemma}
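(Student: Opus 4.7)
The plan is to produce the desired vector by finding $s$ coordinates on which the subspace $\mathbb{U}$ surjects, and then taking a preimage of the all-ones vector in $\mathsf{K}^s$ under the coordinate projection. The key observation is that a rank-$s$ matrix always has $s$ linearly independent columns, which yields such a coordinate set.

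More concretely, I would proceed as follows. First, choose any basis $v_1,\dots,v_s$ of $\mathbb{U}$ and arrange them as the rows of an $s\times n$ matrix $M$ over $\mathsf{K}$. Since $\rank(M)=s$, one can select $s$ column indices $J=\{j_1,\dots,j_s\}\subseteq[n]$ for which the corresponding $s\times s$ submatrix $M_J$ is invertible. Equivalently, the coordinate projection
\[
\pi_J:\mathbb{U}\longrightarrow \mathsf{K}^{J},\qquad (a_1,\dots,a_n)\longmapsto (a_{j_1},\dots,a_{j_s}),
\]
is a $\mathsf{K}$-linear isomorphism, because its matrix in the basis $v_1,\dots,v_s$ is exactly $M_J$.

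Next, I would let $a\in\mathbb{U}$ be the unique vector with $\pi_J(a)=(1,1,\dots,1)\in\mathsf{K}^{J}$, which exists by the isomorphism established above. Then $a_{j}=1\neq 0$ for every $j\in J$, so
\[
\bigl|\{i\in[n]:a_i\neq 0\}\bigr|\;\geq\;|J|\;=\;s,
\]
which is the claim. I do not anticipate any serious obstacle; the only point worth flagging is that the argument requires $1\neq 0$ in $\mathsf{K}$, which of course holds in any field, so there is no issue over small fields such as $\mathsf{F}_2$.
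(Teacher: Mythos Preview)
Your proof is correct and follows essentially the same approach as the paper: both arguments pick a basis of $\mathbb{U}$, use the fact that the resulting rank-$s$ matrix has an invertible $s\times s$ submatrix to obtain a coordinate projection that is an isomorphism onto $\mathsf{K}^s$, and then pull back a vector with all nonzero entries.
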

\begin{proof}
Let $u_1,\dots,  u_s \in \mathbb{U}$ be a basis of $\mathbb{U}$.  For each $j \in [s]$,  we write $u_j = (u_{1,j},\dots,  u_{n,j})^\tp \in \mathsf{K}^n$ and denote $A \coloneqq (u_{i,j})_{(i,j) \in [n] \times [s]} \in \mathsf{K}^{n \times s}$.  It is clear that $\rank A = s$.  Without loss of generality,  we may assume that the first $s$ rows of $A$ are linearly independent.  We consider 
\[
\pi: \mathsf{K}^s \to \mathsf{K}^s,\quad \pi(c) = \begin{bmatrix}
(Ac)_1 \\
\vdots \\
(Ac)_s
\end{bmatrix}.
\]
Since the matrix of $\pi$ is the submatrix of $A$ consisting of its first $s$ rows,  $\pi$ is an isomorphism.  Thus,  there is some $c \in \mathsf{K}^s$ such that the first $s$ coordinates of $a \coloneqq Ac \in \mathbb{U}$ are nonzero. 
\end{proof}
\begin{proposition}[Typical examples]\label{prop:ex}
Suppose $\mathsf{K}$ is a field. 
\begin{enumerate}[label = (\alph*)]
\item\label{prop:ex:item1} For any positive integers $r,d$,  we define $I_{r,d}: \underbrace{\mathsf{K}^r \times \cdots \times \mathsf{K}^r}_{d \text{~times}} \to \mathsf{K}^r$ by
\begin{equation}\label{prop:ex:item1:eq1}
I_{r,d} \left( (a_{1,1},\dots,  a_{1,r}),\dots,  (a_{d,1},\dots,  a_{d,r}) \right) = \left( \smallprod_{i=1}^d a_{i,1},\dots,  \smallprod_{i=1}^d a_{i,r} \right).
\end{equation}
Then $\alpha(I_{r,d})  = \lfloor r (1- 1/d) \rfloor$,  $\alpha_{\s}(I_{r,d})  = 0$ and $\beta(I_{r,d}) =  \lfloor r^{1/d}\rfloor$.  If moreover, $\vert\mathsf{K}\vert>d$,  then we have  
\[
\beta_{\s}(I_{r,d})= \max\left\lbrace m \in\mathbb{N}: \binom{m + d - 1}{d} \le r\right\rbrace.
\]
\item\label{prop:ex:item2} For each positive integer $n$,  we denote by $M_n: \mathsf{K}^{n\times n} \times \mathsf{K}^{n \times n} \to \mathsf{K}^{n\times n}$ the matrix multiplication map.  Then we have $\alpha(M_n) = \lfloor \frac{n}{2} \rfloor n$ and $\beta(M_n) = n$.
\item\label{prop:ex:item3} Suppose $\mathsf{K}$ is algebraically closed and $G$ is a finite group such that $\ch(\mathsf{K})\nmid |G|$.  Let $\varphi_G: \mathsf{K}[G] \times \mathsf{K}[G] \to \mathsf{K}[G]$ be the multiplication map of the group algebra of $G$.  Then
\begin{align*}
 \alpha (\varphi_G) &\ge \left\lfloor \frac{\lvert \lambda \in \Irr(G): \dim \lambda = 1 \rvert}{2} \right\rfloor + \sum_{\lambda \in \Irr(G),\; \dim \lambda > 1} \left\lfloor \frac{n_{\lambda}}{2} \right\rfloor n_{\lambda},\\
\end{align*}
Here $\Irr(G)$ is the set of isomorphism classes of irreducible representations of $G$,  and $n_{\lambda}$ is the dimension of $\lambda \in \Irr(G)$.  Moreover,  the equality holds if $G$ is abelian.
\end{enumerate}
\end{proposition}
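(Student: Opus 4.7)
The three parts are treated separately, with (c) reducing to (a) and (b) via Wedderburn's theorem. Throughout, $I_{r,d}$ is identified with the identity tensor $\sum_{j=1}^{r} e_j^{\ast}\otimes\cdots\otimes e_j^{\ast}\otimes e_j$, so that $I_{r,d}(v_1,\ldots,v_d)$ is the coordinatewise product of the $v_j$'s.

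\textbf{Part (a).} For the upper bound on $\alpha(I_{r,d})$, given $s$-dimensional subspaces $\mathbb{U}_1,\ldots,\mathbb{U}_d$ with $I_{r,d}|_{\mathbb{U}_1\times\cdots\times\mathbb{U}_d}=0$, Lemma~\ref{support set} supplies vectors $a_j\in\mathbb{U}_j$ with $|\operatorname{supp}(a_j)|\ge s$; the isotropy condition forces $\bigcap_j\operatorname{supp}(a_j)=\emptyset$, and the Bonferroni-type inequality $|\bigcap_j S_j|\ge\sum_j|S_j|-(d-1)r$ then yields $ds\le(d-1)r$. The matching lower bound is attained by partitioning $[r]$ into $d$ nearly-equal sets $T_1,\ldots,T_d$ and taking $\mathbb{U}_j\subseteq\spa\{e_i : i\notin T_j\}$ of the common dimension $\lfloor r(1-1/d)\rfloor$. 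That $\alpha_{\s}(I_{r,d})=0$ is immediate since $I_{r,d}(v,\ldots,v)=(v_i^d)_i$ vanishes only when $v=0$. For $\beta(I_{r,d})$, the upper bound $s^d\le r$ is a dimension count on the source $\mathbb{U}_1\otimes\cdots\otimes\mathbb{U}_d\to\mathsf{K}^r$; the matching construction identifies the first $s^d$ coordinates with $[s]^d$ and takes $\mathbb{U}_j$ to be spanned by the slice indicators $u_{j,k}=\sum_{\vec\ell\colon\ell_j=k} e_{\vec\ell}$, so that $u_{1,k_1}\otimes\cdots\otimes u_{d,k_d}\mapsto e_{(k_1,\ldots,k_d)}$ forces injectivity. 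For $\beta_{\s}(I_{r,d})$, the upper bound is $\dim\mathrm{S}^d\mathbb{U}=\binom{s+d-1}{d}$; for the lower bound, Proposition~\ref{lem:poly=sym} recasts the question as finding $r$ linear forms on $\mathsf{K}^s$ whose $d$-th powers span $\mathsf{K}[y_1,\ldots,y_s]_d$, which can be arranged over $|\mathsf{K}|>d$ via a generalized Vandermonde construction combined with Combinatorial Nullstellensatz (Lemma~\ref{comb null}).

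\textbf{Parts (b) and (c).} For $M_n$, given a totally isotropic pair $(\mathbb{U}_1,\mathbb{U}_2)$ with $\dim\mathbb{U}_i=s$, set $K=\bigcap_{A\in\mathbb{U}_1}\ker(A)$ and $k=\dim K$. Then $\mathbb{U}_2\subseteq\{B:\im(B)\subseteq K\}$ and $\mathbb{U}_1\subseteq\{A:A|_K=0\}$ force $s\le n\min(k,n-k)\le n\lfloor n/2\rfloor$, with equality attained by $K=\spa(e_1,\ldots,e_{\lfloor n/2\rfloor})$. For $\beta(M_n)$, the upper bound $s^2\le n^2$ is by dimension, while the lower bound is realized by $\mathbb{U}_1=\spa\{e_i e_1^{\tp}\}_{i=1}^{n}$ and $\mathbb{U}_2=\spa\{e_1 e_j^{\tp}\}_{j=1}^{n}$, whose pairwise products yield all the $E_{ij}$. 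Part (c) then uses Wedderburn's isomorphism $\mathsf{K}[G]\simeq\bigoplus_{\lambda}M_{n_\lambda}(\mathsf{K})$ to write $\varphi_G=I_{k,2}\oplus\bigoplus_{n_\lambda>1}M_{n_\lambda}$, where $k=|\{\lambda\in\Irr(G):\dim\lambda=1\}|$ and the $I_{k,2}$ summand encodes the coordinatewise multiplication on the one-dimensional part. Superadditivity (Lemma~\ref{lem:basic}\ref{lem:basic:item1}) combined with parts (a) and (b) then yields the stated lower bound; for abelian $G$, every irreducible is one-dimensional, so $\varphi_G\simeq I_{|G|,2}$ and the value $\alpha(\varphi_G)=\lfloor |G|/2\rfloor$ from part (a) matches the lower bound exactly.

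\textbf{Main obstacle.} The most delicate step is the $\beta_{\s}(I_{r,d})$ lower bound: while the polynomial reformulation via Proposition~\ref{lem:poly=sym} is clean, exhibiting $\binom{s+d-1}{d}$ linear forms on $\mathsf{K}^s$ whose $d$-th powers are linearly independent requires a tailored generalized Vandermonde construction, and one must invoke Combinatorial Nullstellensatz to certify nonvanishing of the associated determinant polynomial over a field with only $|\mathsf{K}|>d$ imposed.
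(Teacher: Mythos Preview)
Your proposal is correct and largely parallels the paper's proof, but with two genuine simplifications in part~(b). For the upper bound on $\alpha(M_n)$, the paper invokes Flanders' theorem on bounded-rank matrix spaces: letting $r$ be the maximal rank in $\mathbb{U}_1$, every $B\in\mathbb{U}_2$ has rank $\le n-r$, and then \cite[Theorem~1]{Flanders62} gives $\dim\mathbb{U}_1\le rn$. Your argument via $K=\bigcap_{A\in\mathbb{U}_1}\ker A$ is strictly more elementary---it avoids the external reference entirely and gives the bound by a direct dimension count. For $\beta(M_n)$, the paper uses diagonal matrices against circulant matrices ($D_i C_{j-i}=E_{ij}$), whereas your rank-one construction $(e_ie_1^{\tp})(e_1e_j^{\tp})=E_{ij}$ is simpler and just as effective. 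The remaining parts---the support/Bonferroni argument for $\alpha(I_{r,d})$, the $b$-adic/slice-indicator construction for $\beta(I_{r,d})$, the Veronese-spanning argument for $\beta_{\s}(I_{r,d})$ via Lemma~\ref{comb null}, and the Wedderburn reduction for part~(c)---match the paper's approach essentially line for line, with your $\beta_{\s}$ reformulation being the natural dual of the paper's.
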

\begin{proof}
\begin{itemize}
\item[$\diamond$] We first prove \ref{prop:ex:item1}.  Since $\mathsf{K}$ is a field,  it is obvious that $\alpha_{\s}(I_{r,d}) = 0$.  Given $v_1,\dots,  v_d \in \mathsf{K}^r$,  $I_{r,d} (v_1,\dots,  v_d) = 0$ implies that for each $j \in [r]$,  there is some $i_j \in [d]$ such that $v_{i_j,  j} = 0$ where $v_{i,k}$ is the $k$-th element of $v_i$,  $i \in [d],  k \in [r]$.  If $\alpha( (I_{r,d} ) > r - r/d$, there exists $\mathbb{U}_{i}\subseteq \mathsf{K}^{r}$ such that $\dim \mathbb{U}_i = s > r - r/d$ and $I_{r,d} (v_1,\dots,  v_d)=0$ for any $v_{i}\in\mathbb{U}_{i}$ where $i \in [d]$. Lemma~\ref{support set} ensures the existence of $u_{1}\in\mathbb{U}_1,\dots,  u_d \in \mathbb{U}_d$,  each of which has at least $s$ nonzero coordinates.  By the pigeonhole principle,  we deduce a contradiction that $I_{r,d}(u_{1},\cdots,u_{d}) \ne 0$,  and this implies $\alpha( I_{r,d} ) \le r - r/d$.  To show that the upper bound can be achieved,  we consider for each $i \in [d]$,  the subspace $\mathbb{U}_i$ consisting of all $v = (v_1,\dots,  v_r) \in \mathsf{K}^r$ with $v_{(i-1)s + 1} = \cdots = v_{is} = 0$ where $s \coloneqq r - \lfloor r (1- 1/d) \rfloor$,  and the indices $(i-1)s + 1,\dots,  is$ are taken modulo $r$.  It is clear that $\smallprod_{i=1}^d \mathbb{U}_i$ is a totally isotropic subspace of $I_{r,d}$.  

By Lemma~\ref{lem:basic}--\ref{lem:basic:item2},  we have $\beta(I_{r,d})\le b \coloneqq \lfloor r^{1/d}\rfloor$.  It is left to prove that the upper bound can be achieved.   Denote $b \coloneqq \lfloor r^{1/d}\rfloor$ and define for each $i \in [d]$ and $0 \le j \le b - 1$ a vector $v_{i,j} = (v_{i,j,1},\dots,  v_{i,j,r}) \in \mathsf{K}^r$,  where
\[
v_{i,j,k} \coloneqq \begin{cases}
1 \quad &\text{if the $i$-th term of the $b$-adic expansion of $k$ is $j$}, \\
0 \quad &\text{otherwise}.
\end{cases}
\]
By construction,  $I_{r,d} (v_{1,j_1},\dots,  v_{d,j_d})$ is the vector in $\mathsf{K}^r$ whose elements are all zero except for the $(\sum_{s = 0}^{d-1} j_s b^s)$-th,  which is equal to $1$.  This implies that $\mathbb{U}_1 \times \cdots \times \mathbb{U}_d$ is a complete subspace,  where $\mathbb{U}_i $ is the $b$-dimensional subspace spanned by $v_{i,0},\dots,  v_{i,b-1}$ for $i \in [d]$. 

Next,  let  
\[
\beta \coloneqq \max
\left\lbrace b\in\mathbb{N}: \binom{b + d - 1}{d} \le r\right\rbrace,\quad N \coloneqq \binom{\beta + d - 1}{d}.
\]
By Lemma~\ref{lem:basic}--\ref{lem:basic:item2'},  we clearly have $\beta_{\s}(I_{r,d}) \le \beta$.  We consider the map
\[
\nu:\mathsf{K}^{\beta}\to  \mathsf{K}^N,\quad \nu (v_1,\dots, v_\beta) = (v_1^{m_1}\cdots v_\beta^{m_{\beta}}),
\]
where $(m_1,\dots,  m_\beta) \in \mathbb{N}^{\beta}$ and $m_1 + \cdots + m_{\beta} = d$.  We notice that the image of $\nu$ spans $\mathsf{S}^N \simeq \mathsf{S}^d \mathsf{K}^{\beta}$.  Otherwise,  there is some degree $d$ homogeneous polynomial $\lambda \in \mathsf{S}^d ( \mathsf{K}^{\beta} )^\ast$ such that $\lambda(v) = \langle \lambda,  \nu(v)\rangle = 0$ for any $v \in \mathsf{K}^{\beta}$.  This contradicts to the fact that $\vert\mathsf{K}\vert>d$ by Lemma~\ref{comb null}.  Since the image of $\nu$ spans $\mathsf{K}^N$,  there exist $v_1,\dots,  v_N \in \mathsf{K}^{\beta}$ such that $\nu(v_1),\dots,  \nu (v_{N}) \in \mathsf{K}^N$ are linearly independent.  We write 
\[
M \coloneqq \begin{bmatrix}
v_1 & \cdots & v_N & 0_{\beta \times (r - N)}
\end{bmatrix} \in \mathsf{K}^{\beta \times r}.
\]
Here $0_{\beta \times (r - N)}$ denotes the zero matrix of size $\beta \times (r-N)$.  If $\rank M < \beta$,  then $v_1,\dots,  v_N$ are contained in a proper subspace of $\mathsf{K}^\beta$.  This implies that $\nu(v_1),\dots,  \nu(v_N)$ must be linearly dependent.  Thus,  we have $\rank M = \beta$ and $\mathbb{U} \coloneqq \spa \{ u_1 ,\dots,  u_\beta \}$ is $\beta$-dimensional where $u_1,\dots,  u_\beta \in \mathsf{K}^r$ are row vectors of $M$.  Note that the row vectors of 
\[
\begin{bmatrix}
\nu(v_1)& \cdots & \nu(v_N) & 0_{N \times (r-N)}\\
\end{bmatrix} \in \mathsf{K}^{N \times N}
\]
are exactly $I_{r,d} (u_{i_1},\dots,  u_{i_d})$ where $1 \le i_1 \le \cdots \le i_d \le \beta$.  The linear independence of $\nu(v_1),\dots,  \nu(v_N)$ implies that $\mathbb{U}$ is a complete subspace of $I_{r,d}$.  
\item[$\diamond$] Next,  we consider \ref{prop:ex:item2}.  Suppose that $\mathbb{U}_1 \times \mathbb{U}_2$ is a maximal totally isotropic subspace of $M_n$.  If the maximal rank of matrices in $\mathbb{U}_1$ is $r$,  then each matrix in $\mathbb{U}_2$ has rank at most $n -r$.  Without loss of generality,  we suppose that $r \le n/2$.  Then by \cite[Theorem~1]{Flanders62},  we have 
\[
\alpha(M_n) = \dim \mathbb{U}_1 = \dim \mathbb{U}_2 \le rn \le \left\lfloor \frac{n}{2} \right\rfloor n.
\]
The upper bound can be achieve by 
\[
\mathbb{U}_1 \coloneqq \left\lbrace
\begin{bmatrix}
0 & A
\end{bmatrix} \in \mathsf{K}^{n \times n}: A \in \mathsf{K}^{n \times \lfloor \frac{n}{2} \rfloor}
\right\rbrace,\quad 
\mathbb{U}_2 \coloneqq \left\lbrace
\begin{bmatrix}
B \\
0
\end{bmatrix} \in \mathsf{K}^{n \times n}: B \in \mathsf{K}^{\lfloor \frac{n}{2} \rfloor \times n}
\right\rbrace.
\]
By Lemma~\ref{lem:basic}--\ref{lem:basic:item3},  we have $\beta(M_n) \le n$.  We let $\mathbb{V}_1 $ be the space of $n \times n$ diagonal matrices and $\mathbb{V}_2$ be the space of $n \times n$ circulant matrices.  For any $i,  j \in [n]$,  we have $D_i C_{j-i} = E_{i,j}$,  which implies $\mathbb{V}_1 \times \mathbb{V}_2$ is a complete subspace of $M_n$ and completes the proof.  Here $D_s$ is the diagonal matrix whose diagonal elements are all zero,  except for the $s$-th one,  which is one;  $C_{s}$ is the circulant matrix whose elements in the first row are all zero,  except for the $s$-th,  which equals one; $E_{s,t}$ is the $n\times n$ matrix whose elements are all zero,  except for the $(s,t)$-th,  which equals one.  
\item[$\diamond$] By Wedderburn's theorem,  \ref{prop:ex:item3} is a direct consequence of \ref{prop:ex:item1},  \ref{prop:ex:item2} and Lemma~\ref{lem:basic}.  \qedhere
\end{itemize}
\end{proof}
\begin{remark}
According to Lemma~\ref{lem:basic}--\ref{lem:basic:item1'},  both $\alpha_{\w}$ and $\alpha_{\s}$ are additive with respect to the direct sum.  By contrast,  Proposition~\ref{prop:ex}--\ref{prop:ex:item1} implies that $\alpha$ is not additive.  It also shows that $\beta$ and $\beta_{s}$ are not additive.  Moreover,  we recall that $\rank (M_n) = \Omega ( \rank (I_{n^2,2})^2)$,  whereas $\beta(I_{n^2,2}) = \beta (M_n)$ and $\alpha(I_{n^2,2})\asymp \alpha(M_{n})$.  Here $\rank (F)$ is the CP-rank of a bilinear map $F$. This suggests that the isotropy and completeness indices behave in a dramatically different way from the CP-rank.
\end{remark}
We conclude this section by emphasizing that the isotropy and completeness indices are invariants of multilinear maps,  in the sense that they are invariant under the canonical action of $\GL(\mathbb{V}_1) \times \cdots \GL(\mathbb{V}_d) \times \GL(\mathbb{W})$ on $\Hom(\mathbb{V}_1 \times \cdots \times \mathbb{V}_d,  \mathbb{W})$.  However,  these indices are not invariants of tensors.  In fact,  they are not even well-defined for tensors.  For example,  a tensor $T \in \mathbb{V}_1 \otimes \mathbb{V}_2 \otimes  \mathbb{V}_3$ uniquely determines $F_{1} \in \Hom(\mathbb{V}_2^\ast \times \mathbb{V}_3^\ast,  \mathbb{V}_1)$ and $F_{2} \in \Hom(\mathbb{V}_1^\ast \times \mathbb{V}_3^\ast,  \mathbb{V}_2)$,  but it is clear that $\alpha(F_1) \ne \alpha(F_2)$ and $\beta(F_1) \ne \beta(F_2)$  in general.

\section{Algebraic technicalities}\label{sec:alg}
The goal of this section is to establish some non-vanishing results for maximal minors of matrices over polynomial rings.  Main results are Propositions~\ref{coro1} and \ref{lem2},  which will be used to connect the isotropy and completeness indices with tensor ranks in Section~\ref{sec:estimate}.  The idea that underlies the proof of the following lemma is borrowed from the proof of Theorem~2.4 in \cite{ananyan2020small}.
\begin{lemma}\label{lem1}
Let $R$ be a polynomial ring over a field.  Suppose $M = (M_{k,j})_{(k,j) \in [m+1] \times [n]} \in R^{(m + 1) \times n}$ satisfies the following conditions:
\begin{enumerate}[label=(\alph*)]
\item\label{lem1:item1} For each $k \in [m+1]$,  $M_{k,1},\dots,  M_{k,n}$ are homogeneous of the same degree $d_k$.  Moreover,  $ d_i < d_{m + 1}$ for each $ i \in [m]$.
\item\label{lem1:item2} The first $m$ rows of $M$ are linearly independent over the field of fractions of $R$.
\item\label{lem1:item3} $\height (\mathfrak{a}) \ge m +1$,  where $\mathfrak{a}$ is the ideal generated by $M_{m+1,1},\dots,  M_{m+1,n}$.
\end{enumerate}
Then $M$ has a nonzero maximal minor.
\end{lemma}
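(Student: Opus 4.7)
The plan is a proof by contradiction via associated primes in the Cohen--Macaulay ring $R$. Suppose every $(m+1)\times(m+1)$ minor of $M$ vanishes. Combined with hypothesis (b), the $\Frac(R)$-rank of $M$ is exactly $m$, so the last row is a $\Frac(R)$-linear combination of the first $m$. Clearing denominators yields $b, c_1,\dots,c_m\in R$ with $b\ne 0$ and
\[
b\,M_{m+1,j}=\sum_{i=1}^{m} c_i M_{i,j},\qquad j\in[n].
\]
Extracting the homogeneous parts of both sides in each total degree and choosing a degree for which the corresponding homogeneous component of $b$ is nonzero, I may further assume that $b$ and each $c_i$ are homogeneous with $\deg b=e$ and $\deg c_i=e+d_{m+1}-d_i$ for some $e\ge 0$. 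Hypothesis (a) then yields $\deg c_i>\deg b$ for every $i$.

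The crucial consequence is that this degree gap forces $b\notin\mathfrak{c}\coloneqq(c_1,\dots,c_m)$: any representation $b=\sum r_i c_i$ with homogeneous $r_i$ would require $\deg r_i<0$. Hence $(\mathfrak{c}:b)$ is a proper ideal of $R$, and the displayed relation shows $M_{m+1,j}\in (\mathfrak{c}:b)$ for every $j$, so $\mathfrak{a}\subseteq(\mathfrak{c}:b)$.

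To finish, I would bound $\height(\mathfrak{c}:b)$. The multiplication-by-$b$ map $R\to R/\mathfrak{c}$ has kernel $\mathfrak{c}:b$ and thus embeds $R/(\mathfrak{c}:b)$ into $R/\mathfrak{c}$; in particular $\Ass\bigl(R/(\mathfrak{c}:b)\bigr)\subseteq \Ass(R/\mathfrak{c})$. Because $R$ is Cohen--Macaulay and $\mathfrak{c}$ is generated by $m$ elements, the standard depth inequality $\operatorname{depth}(R_{\mathfrak{p}}/\mathfrak{c} R_{\mathfrak{p}})\ge \height(\mathfrak{p})-m$ at an associated prime $\mathfrak{p}$ of $R/\mathfrak{c}$ forces $\height(\mathfrak{p})\le m$. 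Therefore $\height(\mathfrak{c}:b)\le m$, and the inclusion $\mathfrak{a}\subseteq(\mathfrak{c}:b)$ gives $\height(\mathfrak{a})\le m$, contradicting hypothesis (c).

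The step that requires most care is identifying the right ideal containing $\mathfrak{a}$: one cannot expect $\mathfrak{a}\subseteq\mathfrak{c}$ directly, but the colon ideal $\mathfrak{c}:b$ works precisely because the degree gap from hypothesis (a) keeps $b\notin\mathfrak{c}$ and hence $\mathfrak{c}:b$ proper; without this gap the colon would collapse to the unit ideal and the argument would fail. Everything else is a formal consequence of the Cohen--Macaulay property.
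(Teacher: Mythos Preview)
Your proof is correct and takes a genuinely different route from the paper's.

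Both arguments begin identically: assuming all maximal minors vanish, the last row lies in the $\Frac(R)$-span of the first $m$, and after clearing denominators and extracting homogeneous components one obtains a relation $b\,\rho_{m+1}=\sum_i c_i\rho_i$ with $\deg c_i>\deg b$. From here the two proofs diverge.

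You remain inside $R$: the degree gap gives $b\notin\mathfrak{c}=(c_1,\dots,c_m)$, whence $\mathfrak{a}\subseteq(\mathfrak{c}:b)\subsetneq R$, and the embedding $R/(\mathfrak{c}:b)\hookrightarrow R/\mathfrak{c}$ forces every minimal prime of $(\mathfrak{c}:b)$ to lie in $\Ass(R/\mathfrak{c})$. The Cohen--Macaulay property of the polynomial ring then caps all such heights by $m$, contradicting~(c). This is clean and conceptually transparent; the only nontrivial input is the standard fact that over a CM ring an ideal with $m$ generators has no associated prime of height exceeding $m$ (equivalently your depth inequality $\operatorname{depth}(R_{\mathfrak{p}}/\mathfrak{c} R_{\mathfrak{p}})\ge\height\mathfrak{p}-m$).

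The paper instead first passes to $\overline{\mathsf{K}}$, then repeatedly quotients by generic linear forms to reduce to the extreme case $\Ass(\mathfrak{a})=\{\mathfrak{m}\}$. Using Cramer's rule it writes $\rho_{m+1}=\sum u_i\rho_i$ with $u_i=-\Delta_i/\Delta\in\Frac(R)$, forms the graded overring $S=R[u_1,\dots,u_m]$, and shows that the ideal $(u_1,\dots,u_m)\subseteq S$ has height $\dim S=\dim R\ge m+1$, violating Krull's principal ideal theorem in $S$. The degree hypothesis $d_i<d_{m+1}$ is used to guarantee $\deg u_i>0$, hence properness of $(u_1,\dots,u_m)$---the exact analogue of your use of it to secure $b\notin\mathfrak{c}$.

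Your argument is shorter and invokes the CM machinery directly; the paper's is more explicit (working with the concrete Cramer coefficients) but pays for it with the preliminary reduction and the passage to $S$. Both ultimately rest on Krull's height theorem applied to an $m$-generated ideal.
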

\begin{proof}
We begin by reducing the problem to a special case.  Suppose the ground field of $R$ is $\mathsf{K}$.  If $\mathsf{K}$ is not algebraically closed,  we consider the polynomial ring $\overline{R} \coloneqq R \otimes_{\mathsf{K}} \overline{\mathsf{K}}$ over $\overline{\mathsf{K}}$.  According to \cite[Lemma~3.4~(a)]{chen2025bounds},  we have $\height(\mathfrak{b} \overline{R} ) = \height(\mathfrak{b})$ for any ideal $\mathfrak{b}$ of $R$.  Thus any $M \in R^{(m+1) \times n}$ satisfying \ref{lem1:item1}--\ref{lem1:item3} for $R$ also satisfies \ref{lem1:item1}--\ref{lem1:item3} for $\overline{R}$.  Consequently,  it is sufficient to assume that $\mathsf{K}$ is algebraically closed.  

By \ref{lem1:item2},  $M$ has a nonsingular $m \times m$ submatrix,  and we may permute columns of $M$ so that it is the upper left $m \times m$ submatrix.  Let $\Delta$ be the determinant of this submatrix.  If there is a linear form $\ell \in R$ such that $\ell \nmid \Delta$ and $\ell \not\in \cup_{\mathfrak{p} \in \Ass(\mathfrak{a})} \mathfrak{p}$,  then $R/(\ell)$ is a polynomial ring of dimension $\dim R - 1$.  Here $\Ass(\mathfrak{a})$ is the set of associated primes of $\mathfrak{a}$. Moreover,  we have $\height(\pi_1 (\mathfrak{a}) ) = \height(\mathfrak{a})$ and $\pi_1 (\Delta) \ne 0$,  where $\pi_1: R \to R/(\ell)$ is the natural quotient map.  Since $\pi_1$ preserves the degree,  the matrix $(\pi_1(M_{ij})) \in \left( R/(\ell) \right)^{(m+1) \times n}$ satisfies \ref{lem1:item1}.  We may repeat the process to obtain a polynomial ring $R'$ over $\mathsf{K}$ together with a quotient map $\pi: R \to R'$,   such that there is no linear form $\ell \in R'$ simultaneously satisfying $\ell \nmid \Delta' \coloneqq \pi (\Delta) $ and $\ell \not\in \cup_{\mathfrak{p} \in \Ass( \mathfrak{a}' )} \mathfrak{p}$,  where $ \mathfrak{a}' \coloneqq \pi (\mathfrak{a})$.  By construction,  we have $\Delta'  \ne 0$,  $\height(\mathfrak{a}' ) = \height ( \mathfrak{a} )$ and $M' \coloneqq (\pi (M_{ij})) \in {R'}^{(m+1) \times n}$ satisfies \ref{lem1:item1}.  Clearly,  if $M'$ has a nonzero maximal minor,  so does $M$.

In the rest of the proof,  we assume that there is no linear form $\ell \in R$ simultaneously satisfying $\ell \nmid \Delta$ and $\ell \not\in \cup_{\mathfrak{p} \in \Ass( \mathfrak{a} )} \mathfrak{p}$.  Since $\Delta$ only has finitely many linear factors,  let $\ell_{1},\dots,\ell_{s}$ denote the linear factors of $\Delta$. Let $\mathfrak{m}$ be the maximal homogeneous ideal of $R$ and let $\mathfrak{m}_1$ be the linear space consisting of all linear forms in $R$.  Then we may deduce from the assumption that $\mathfrak{m}_1 \subseteq \{\ell_{1},\cdots,\ell_{s}\}\cup\cup_{\mathfrak{p} \in \Ass( \mathfrak{a} )} \mathfrak{p}$.  Since $\mathfrak{m}_1$ is a vector space,  we conclude that $\mathfrak{m}_1 \subseteq \cup_{\mathfrak{p} \in \Ass( \mathfrak{a} )} (\mathfrak{p})$.  As a consequence,  we have $\mathfrak{m} \in \Ass( \mathfrak{a} )$.  By definition,  there is some $f \in R\setminus \mathfrak{a}$ such that $f \mathfrak{m} \subseteq \mathfrak{a}$.  This implies $ \Ass( \mathfrak{a} ) = \{ \mathfrak{m} \}$ and $N = \height ( \mathfrak{m}) = \height ( \mathfrak{a})  \ge m + 1$.  

We prove the existence of a nonzero maximal minor of $M$ by contradiction.  To achieve the goal,  we let $M^{(i)}$ be the $m \times m$ submatrix of $M$ obtained by taking the first $m$ columns and removing the $i$-th row,  where $i \in [m]$. Denote $\Delta_i \coloneqq \det M^{(i)}$.  If all maximal minors of $M$ are zero,  then it must hold that $(\Delta_{1},\cdots,\Delta_{m},\Delta)M =0$.  Hence,  we obtain 
\[
\rho_{m+1}= \sum_{i=1}^{m}u_{i} \rho_{i},\quad u_i \coloneqq -\frac{\Delta_{i}}{\Delta},\;  i \in [m]. 
\]
Here $\rho_k$ is the $k$-th row of $M$ for $k \in [m+1]$.  By definition,  $\deg u_i = d_{m+1} - d_i > 0$,  $i \in [m]$.  

Next,  we consider the graded,  finitely generated $\mathsf{K}$-algebra $S \coloneqq R[u_{1},\cdots,u_{m}]$.  Since $R \subseteq S \subseteq \Frac(R)$,  we have by \cite[Chapter~4,  \S 15]{Matsumura70} that 
\[
m+1 \le N = \dim R = \operatorname{tr.  deg.}  \Frac(R) = \operatorname{tr. deg.} \Frac(S) = \dim S,
\] 
where $\Frac(R_1)$ is the field of fractions of an integral domain $R_1$ and $\operatorname{tr.  deg.} \mathsf{F}$ is the transcendence degree of a field extension $\mathsf{F}/\mathsf{K}$.  

To obtain a contradiction,  we denote by $ \mathfrak{c}$ the ideal of $S$ generated by $u_1,\dots,  u_m$.  Since $\mathfrak{a}$ is generated by elements of $\rho_{m+1}$,  we have $\mathfrak{a} S \subseteq  \mathfrak{c} $.  Moreover,  since $\deg u_i >0$ for each $i \in [m]$,  $\mathfrak{c} $ is a proper ideal of $S$.  Now that $f \mathfrak{m}  \subseteq \mathfrak{a}$ and $S$ is a finitely generated algebra over $\mathsf{K}$,  every prime ideal $\mathfrak{q}$ in $S$ containing $\mathfrak{c}$ must also contain $\mathfrak{m}$.  Hence $\Ass (\mathfrak{c}) = \{ \mathfrak{m} S + \mathfrak{c} \}$ as $\mathfrak{m} S + \mathfrak{c}$ is the maximal homogeneous ideal of $S$.  As a consequence,  we have $\height \mathfrak{c} = \height (\mathfrak{m} S + \mathfrak{c}) = \dim S = N \ge m+1$.  However,  by Krull’s principal ideal theorem \cite[Theorem~7.5]{kemper2011course},  this contradicts to the fact that $\mathfrak{c}$ is generated by $m$ elements in $S$.  
\end{proof}

Given a matrix $M$ over the polynomial ring $R = \mathsf{K}[x_1,\dots,  x_N]$ and $a \in \mathsf{K}^N$,  we denote by $M(a)$ the matrix over $\mathsf{K}$ obtained by evaluating elements of $M$ at $a$.
\begin{proposition}[Non-vanishing maximal minor I]\label{coro1} 
Let $\mathsf{K}$ be a field and let $R = \mathsf{K}[x_1,\dots,  x_N]$.  If a matrix $M \in R^{(m + 1) \times n}$ satisfies conditions \ref{lem1:item1}--\ref{lem1:item3} of Lemma~\ref{lem1} and $|\mathsf{K}| >\max_{s \in [N]} \sum_{i=1}^{m+1}d_{i,s} $,  then there exists $a \in \mathsf{K}^N$ such that $\rank M(a) = m+1$.  Here $d_{i,s} \coloneqq \max_{j \in [n]} \{ \deg_{x_s} M_{i,j} \}$ for $(i,s)\in [m+1] \times [N]$.
\end{proposition}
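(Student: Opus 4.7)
The plan is to reduce the proposition to Lemma~\ref{lem1} together with the Combinatorial Nullstellensatz (Lemma~\ref{comb null}). First I would apply Lemma~\ref{lem1} directly: since $M$ fulfills hypotheses \ref{lem1:item1}--\ref{lem1:item3}, it possesses a nonzero maximal minor, say $\Delta' \in R$, obtained as the determinant of some $(m+1) \times (m+1)$ submatrix $M' = (M'_{i,j})$ of $M$. Proving $\rank M(a) = m+1$ for some $a \in \mathsf{K}^N$ is therefore equivalent to exhibiting $a$ with $\Delta'(a) \neq 0$, since $M(a)$ has exactly $m+1$ rows.

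The next step is a degree bookkeeping. Expanding $\Delta'$ as the sum over $\sigma \in \mathfrak{S}_{m+1}$ of $\sign(\sigma) \prod_{i=1}^{m+1} M'_{i, \sigma(i)}$ and using $\deg_{x_s} M'_{i,j} \le d_{i,s}$ for each $(i,j)$, one immediately obtains the bound
\[
\deg_{x_s} \Delta' \;\le\; \sum_{i=1}^{m+1} d_{i,s} \quad \text{for every } s \in [N].
\]
By the hypothesis $|\mathsf{K}| > \max_{s \in [N]} \sum_{i=1}^{m+1} d_{i,s}$, for each coordinate $s$ I can pick a subset $S_s \subseteq \mathsf{K}$ with $|S_s| = \sum_{i=1}^{m+1} d_{i,s} + 1 > \deg_{x_s} \Delta'$. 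Lemma~\ref{comb null} then furnishes $a \in S_1 \times \cdots \times S_N$ with $\Delta'(a) \neq 0$, which completes the argument.

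There is essentially no obstacle here beyond carefully aligning the per-variable degree bound with the cardinality hypothesis; the heavy lifting (existence of a nonzero maximal minor in the first place) is already absorbed into Lemma~\ref{lem1}. The only mild subtlety is to note that the specific $(m+1) \times (m+1)$ submatrix realizing the nonzero minor need not involve all columns of $M$, so that the bound uses the row-wise maxima $d_{i,s}$ rather than coefficients peculiar to one column; this is exactly why the statement is phrased in terms of $d_{i,s} = \max_{j \in [n]} \deg_{x_s} M_{i,j}$.
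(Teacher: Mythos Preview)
Your proposal is correct and follows exactly the same approach as the paper: invoke Lemma~\ref{lem1} to obtain a nonzero maximal minor, bound its degree in each variable $x_s$ by $\sum_{i=1}^{m+1} d_{i,s}$, and then apply the Combinatorial Nullstellensatz (Lemma~\ref{comb null}) to locate a point $a \in \mathsf{K}^N$ where this minor does not vanish. Your degree bookkeeping via the Leibniz expansion is slightly more explicit than the paper's one-line assertion, but the argument is identical in substance.
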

\begin{proof}
By Lemma~\ref{lem1},  $M$ has a nonzero maximal minor,  which is a polynomial whose degree in $x_s$ is at most $\sum_{i=1}^{m+1}d_{i,s}$ for each $s \in [N]$.  Since $|\mathsf{K}| >\max_{s \in [N]} \sum_{i=1}^{m+1}d_{i,s} $,  Lemma~\ref{comb null} implies the existence of a desired point in $\mathsf{K}^{N}$.
\end{proof}

To establish Proposition~\ref{lem2},  we need the following elementary result.
\begin{lemma}\label{adjustment}
Assume $N,  c,   q$ are integers such that $N \le c \le qN$.  The optimization problem
\begin{equation}\label{eq:opt}
\begin{tabular}{rl}
minimize & $z_1 \cdots z_N$\\
subject to & $z_{1}+\cdots+z_{N}= c$,  \\
& $z_i \in [q]$, \; $i \in [N]$.
\end{tabular}
\end{equation}
has a solution 
\[
z_1  = \cdots = z_{\lfloor \frac{c - N}{q-1} \rfloor} = q,  \;z_{\lfloor \frac{c - N}{q-1} \rfloor + 1} = \cdots = z_{N-1} = 1, \;z_{N} = c - N + 1 - (q-1) \left\lfloor \frac{c - N}{q-1} \right\rfloor.
\]
\end{lemma}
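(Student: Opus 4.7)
The plan is an exchange argument that reduces any feasible point to a canonical form with at most one ``middle'' coordinate, followed by a one-dimensional optimization over that family. First, I would show that at any minimizer at most one coordinate lies strictly inside the open interval $(1, q)$. Given two distinct coordinates $z_i \le z_j$ with $z_i, z_j \in (1,q)$, the replacement $(z_i, z_j) \mapsto (z_i - 1, z_j + 1)$ remains in $[q]^N$, preserves the sum, and changes the product by $(z_i - 1)(z_j + 1) - z_i z_j = z_i - z_j - 1 \le -1$, a strict decrease. Since the product takes positive integer values and is bounded below by $1$, iterating terminates at a configuration with at most one interior coordinate.

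Next, I would parametrize such canonical configurations by the number $k$ of coordinates equal to $q$: writing the remaining coordinates as one value $y \in [q]$ and $N - k - 1$ copies of $1$, the sum constraint forces $y = c - N + 1 - k(q-1)$, and the requirement $y \in [q]$ restricts $k$ to an integer interval whose upper endpoint, using the hypothesis $N \le c \le qN$, is exactly $k^\ast := \lfloor (c - N)/(q-1) \rfloor$. At $k = k^\ast$ one checks that $y \in [1, q-1]$, so this is precisely the canonical configuration described in the lemma.

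Finally, I would verify that the product $P(k) := q^k \cdot y_k$, with $y_k := c - N + 1 - k(q-1)$, is non-increasing over the feasible range of $k$: the ratio $P(k+1)/P(k) = q(y_k - (q-1))/y_k = q - q(q-1)/y_k$ is at most $1$ precisely when $y_k \le q$, which is automatic on the feasible range. Hence the minimum among canonical configurations is attained at the largest feasible $k$, namely $k = k^\ast$, completing the proof.

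I do not anticipate any substantial obstacle; the argument is a standard majorization-type exchange followed by a finite monotonicity check. The main care needed is bookkeeping for the degenerate cases $y \in \{1, q\}$---where the ``one middle value'' coincides with an endpoint---and for the extreme case $c = qN$, so that the termination of the exchange process lands cleanly in the family described by the lemma.
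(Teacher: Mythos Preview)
Your proposal is correct and follows essentially the same route as the paper: both hinge on the exchange $(z_i,z_j)\mapsto(z_i-1,z_j+1)$ to see that an optimal configuration has at most one coordinate strictly between $1$ and $q$. The only cosmetic difference is the last step: the paper observes that the constraint $y_k\in[q]$ already pins $k$ to at most two consecutive integers (with the two configurations being permutations of one another in the divisible case $(q-1)\mid(c-N)$), so no further optimization is needed, whereas you phrase this as a monotonicity check of $P(k)$ over that same feasible range---which amounts to comparing the same two values.
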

\begin{proof}
Suppose $(a_1,\dots,  a_N)$ is a solution of \eqref{eq:opt}.  Since $(x - 1)(y + 1) < xy$ for any integers $x \le y$,  there is at most one $i \in [N]$ such that $2 \le a_i \le q-1$.  Without loss of generality,  we assume $i = N$ and
\[
a_1 = \cdots = a_t = q,\; a_{t+1} = \cdots = a_{N-1} = 1
\]
for some $t \in [N-1]$.  The constraint that $z_1 + \cdots + z_N = c$ implies $(q-1)t + a_N = c - N + 1$,  from which we obtain $(c - N)/(q-1) - 1 \le t \le (c - N)/(q-1)$.  If $(q - 1) \nmid (c - N)$,  then $t = \lfloor (c - N)/(q-1) \rfloor$.  If $(q - 1) \mid (c - N)$,  then either $t = (c - N)/(q-1)$ or $t = (c - N)/(q-1) - 1$,  from which we obtain either $a_N = 1$ or $a_N = q$.  In each case,  we may conclude that \eqref{eq:opt} must have a solution of the desired form,  as any permutation of $(a_1,\dots,  a_N)$ is also a solution.
\end{proof}

\begin{lemma}\label{nonzero points}
Given a nonzero polynomial $f \in \mathsf{F}_q[x_1,\dots,  x_N]$ of degree $d$,  we have 
\[
|V(f)| \le q^N \left( 1  - q^{-\left\lceil \frac{d}{q-1} \right\rceil} \right).
\]
\end{lemma}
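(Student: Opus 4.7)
The plan is to apply the generalized Schwartz--Zippel bound (Lemma~\ref{alon-furedi bound}) with $Y_1 = \cdots = Y_N = \mathsf{F}_q$, so that $r_i = q$ for every $i$. That lemma then yields
\[
\bigl|\mathsf{F}_q^N \setminus V(f)\bigr| \ge \min\bigl\{z_1 \cdots z_N : z_i \in [q],\ z_1 + \cdots + z_N = qN - d\bigr\},
\]
provided $f$ does not vanish identically on $\mathsf{F}_q^N$. Since the hypothesis only asserts that $f$ is nonzero as a polynomial, I would first reduce $f$ modulo the ideal $(x_1^q - x_1, \dots, x_N^q - x_N)$ to obtain a polynomial $\tilde{f}$ of total degree $\tilde{d} \le \min(d, N(q-1))$ with $\deg_{x_i} \tilde{f} < q$, noting that $V(f) \cap \mathsf{F}_q^N = V(\tilde{f}) \cap \mathsf{F}_q^N$ and that $\tilde{f} \ne 0$ precisely when $f$ fails to vanish identically on $\mathsf{F}_q^N$. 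Since $\lceil \tilde{d}/(q-1) \rceil \le \lceil d/(q-1) \rceil$, any bound on $|V(\tilde{f})|$ in terms of $\tilde{d}$ upgrades to the desired one.

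Next, I would invoke Lemma~\ref{adjustment} with $c = qN - \tilde{d}$; the hypothesis $N \le c \le qN$ is automatic because $0 \le \tilde{d} \le N(q-1)$. The lemma produces a minimizer in which $t \coloneqq \lfloor (c-N)/(q-1)\rfloor$ of the coordinates equal $q$, while all remaining ones lie in $[1, q-1]$, so the product is at least $q^{t}$. Using the identity $\lfloor N - x \rfloor = N - \lceil x \rceil$ valid for every real $x$ and integer $N$, I compute $t = \lfloor N - \tilde{d}/(q-1) \rfloor = N - \lceil \tilde{d}/(q-1) \rceil$. Putting everything together,
\[
|V(f)| \le q^N - q^{N - \lceil \tilde{d}/(q-1)\rceil} \le q^N - q^{N - \lceil d/(q-1)\rceil} = q^N\bigl(1 - q^{-\lceil d/(q-1)\rceil}\bigr).
\]

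The argument is almost entirely a mechanical assembly of the two cited lemmas: once the right substitution is made, the claim falls out. The only point requiring a little care is the opening reduction to a polynomial that is not identically zero on $\mathsf{F}_q^N$ (Lemma~\ref{alon-furedi bound} demands this, whereas the statement only assumes $f \ne 0$ as a polynomial), which is why reducing modulo the field equations is the first move. Beyond that, the sole non-routine calculation is identifying $\lfloor N - d/(q-1) \rfloor$ with $N - \lceil d/(q-1)\rceil$ so that Lemma~\ref{adjustment}'s output takes the clean exponent $N - \lceil d/(q-1)\rceil$ displayed in the statement.
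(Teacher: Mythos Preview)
Your proof is correct and follows essentially the same route as the paper: reduce modulo the field equations so that $\deg_{x_i} f \le q-1$, apply the Alon--F\"uredi bound (Lemma~\ref{alon-furedi bound}) with each $Y_i = \mathsf{F}_q$, and then use Lemma~\ref{adjustment} together with the identity $\lfloor N - x\rfloor = N - \lceil x\rceil$ to extract the exponent $N - \lceil d/(q-1)\rceil$. You are in fact slightly more careful than the paper in flagging that the reduction step needs $f$ not to vanish identically on $\mathsf{F}_q^N$.
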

\begin{proof}
Since $a^{q}- a=0$ for any $a \in\mathsf{F}_q$,  we may assume $\deg_{x_{i}} f\le q-1$ for each $i \in [N]$.  In particular,  we have $0 \le d \le (q-1)N$.  By Lemma~\ref{alon-furedi bound},  
\[
\lvert \mathsf{F}_q^N \setminus V(f) \rvert \ge \alpha \coloneqq \min  \left\lbrace
z_1 \cdots z_N: z_i \in [r_i], \; i \in [N],  \;  z_1 + \cdots + z_N = qN - d
\right\rbrace.
\] 
According to Lemma~\ref{adjustment},  
\[
\alpha = q^{ \lfloor N - \frac{ d}{q-1} \rfloor } \left( (q-1)N - d + 1 - (q-1) \left\lfloor N - \frac{d}{q-1} \right\rfloor \right) \ge q^{N - \lceil \frac{d}{q-1} \rceil}.\qedhere
\]
\end{proof}

Let $\mathbb{V}_{1},\dots,  \mathbb{V}_{d}$ be vector spaces over a field $\mathsf{K}$.  Suppose $R^{(i)}$ is the ring of polynomials on $\mathbb{V}_i$ for each $i \in [d]$.  Denote $R \coloneqq R^{(1)} \otimes \cdots \otimes R^{(d)}$.  We observe that $R$ is $\mathbb{N}^d$-graded: 
\[
R_{m_1,\dots,  m_d} \coloneqq R^{(1)}_{m_1} \otimes \cdots \otimes R^{(d)}_{m_d},\quad (m_1,\dots,  m_d) \in \mathbb{N}^d.
\]
Here $R^{(i)}_m$ is the subspace of  $R^{(i)}$ consisting of degree $m$ homogeneous polynomials for each $ i \in [d]$.  We equip $\mathbb{N}^d$ with the following partial order: 
\begin{equation}\label{eq:order}
(m_1,\dots, m_d) \preceq (l_1,\dots,  l_d) \iff m_i \le l_i \text{~for each~} i \in [d].
\end{equation}
Moreover,  we define 
\begin{equation}\label{eq:sigma}
\sigma: \mathbb{N} \to \mathbb{N},\quad \sigma(m_1,\dots,  m_d) \coloneqq m_1 + \cdots + m_d.
\end{equation}
The proposition that follows is a multilinear analogue of Proposition~\ref{coro1}.
\begin{proposition}[Non-vanishing maximal minor II]\label{lem2}
Let $\mathbb{V}_{1},\dots,  \mathbb{V}_{d}$ be vector spaces over a field $\mathsf{F}_q$ and let $R^{(1)},\dots,  R^{(d)}$ be rings of polynomials on $\mathbb{V}_1,\dots,  \mathbb{V}_d$ over $\mathsf{F}_q$,  respectively.  Denote $R \coloneqq R^{(1)} \otimes \cdots \otimes R^{(d)}$.  Suppose the matrix $M = (M_{k,j})_{(k,j) \in [m+1] \times [n]} \in R^{(m+1) \times n}$ satisfies
\begin{enumerate}[label=(\alph*)]
\item\label{lem2:item1} For each $ k \in [m+1]$,  $M_{k,1},\dots,  M_{k,n}$ are multi-homogeneous of the same multi-degree $\delta_k \in \mathbb{N}^d$.  Moreover,  $\delta_{m+1} = (1,\dots,  1)$ and $\delta_i \prec \delta_{m+1}$ for any $i \in [m]$.
\item\label{lem2:item2} There exists some $v \in \mathbb{V} \coloneqq \mathbb{V}_1 \times \cdots \times \mathbb{V}_d$ such that the first $m$ rows of $M(v) \coloneqq (M_{k,j}(v)) \in \mathsf{F}_q^{(m+1) \times n}$ are linearly independent. 
\item\label{lem2:item3} Let $F: \mathbb{V} \to \mathsf{F}_q^n$ be the $d$-multilinear map defined by $F(v) \coloneqq (M_{m+1,1}(v),\dots,  M_{m+1,n}(v))$.  We have $\AR (F) > d + m + \lceil m(d-1)/(q-1) \rceil$.
\end{enumerate}
Then $\rank M(a) = m+1$ for some $a \in \mathbb{V}$. 
\end{proposition}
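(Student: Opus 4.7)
The plan is to reduce Proposition~\ref{lem2} to Lemma~\ref{lem1} combined with Lemma~\ref{nonzero points}. Writing $N \coloneqq \dim \mathbb{V}$, I first show that $M$, viewed as a matrix over the multigraded ring $R$, admits a nonzero maximal minor, and then produce a point of $\mathbb{V}$ at which that minor does not vanish.

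To apply Lemma~\ref{lem1} I verify its three hypotheses. Taking $d_k \coloneqq \sigma(\delta_k)$, the multihomogeneity assumption (a) of Proposition~\ref{lem2} yields hypothesis (a) of Lemma~\ref{lem1}, since $\delta_i \prec \delta_{m+1}$ forces $d_i < d_{m+1} = d$. Hypothesis (b) of Lemma~\ref{lem1} follows directly from hypothesis (b) of Proposition~\ref{lem2}: linear independence of the first $m$ rows of $M(v)$ means some $m \times m$ submatrix has nonzero determinant at $v$, hence is a nonzero element of $R$, which forces linear independence of those rows over $\Frac(R)$. The remaining hypothesis (c), namely $\height(\mathfrak{a}_F) \ge m+1$, is the delicate one and is where the analytic-rank assumption enters.

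For (c) I would use the identity $\height(\mathfrak{a}_F) = \GR(T_F)$ (since the vanishing locus of $\mathfrak{a}_F$ is precisely the variety $\{v \in \mathbb{V} : F(v) = 0\}$ from the definition of geometric rank), together with a quantitative finite-field form of Lemma~\ref{grvsar} of the shape
\[
\AR(T_F) \le \GR(T_F) + \left\lceil \frac{\GR(T_F)(d-1)}{q - 1} \right\rceil + d,
\]
obtainable by an Alon--Furedi count (cf.\ Lemmas~\ref{alon-furedi bound} and~\ref{nonzero points}) on the common zero locus of the multilinear forms $F_1,\dots,F_n$ over $\mathsf{F}_q$. Plugging this into hypothesis (c) of Proposition~\ref{lem2} forces $\GR(T_F) > m$, hence $\height(\mathfrak{a}_F) \ge m+1$.

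Lemma~\ref{lem1} then produces a nonzero maximal minor $\Delta \in R$ of $M$. Since $\Delta$ is multihomogeneous of multidegree $\sum_{i=1}^{m+1} \delta_i \preceq (m+1,\dots,m+1)$, its total degree is at most $(m+1)d$. I conclude by applying Lemma~\ref{nonzero points} to $\Delta$, which yields
\[
|\mathbb{V} \setminus V(\Delta)| \ge q^{\,N - \lceil \deg\Delta/(q-1)\rceil},
\]
positive once $N \ge \lceil \deg\Delta/(q-1) \rceil$. The analytic-rank hypothesis guarantees $N \ge \AR(T_F) > d + m + \lceil m(d-1)/(q-1)\rceil$, which comfortably exceeds $\lceil (m+1)d/(q-1) \rceil$, so we obtain $a \in \mathbb{V}$ with $\Delta(a) \ne 0$, whence $\rank M(a) = m+1$. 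The principal obstacle is matching the precise constants in the $\AR \le \GR + O(\cdot)$ inequality used at step (c) with the exact threshold $d + m + \lceil m(d-1)/(q-1)\rceil$ of hypothesis (c); the characteristic factor $(d-1)/(q-1)$ strongly suggests routing through Alon--Furedi applied to the full multilinear system $F_1,\dots,F_n$, with the additive $d$ absorbing a boundary correction from the multidegree structure.
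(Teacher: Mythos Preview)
Your route through Lemma~\ref{lem1} is not how the paper proceeds, and the gap you yourself flag in step~(c) is real rather than a matter of constants. The inequality you need,
\[
\AR(T_F) \le \GR(T_F) + \left\lceil \frac{\GR(T_F)(d-1)}{q-1} \right\rceil + d,
\]
asks for a \emph{lower} bound on the number of common $\mathsf{F}_q$-zeros of the system $F_1,\dots,F_n$ in terms of the codimension of its vanishing locus over $\overline{\mathsf{F}_q}$. Alon--F\"uredi (Lemma~\ref{alon-furedi bound}) does the opposite: it lower-bounds the \emph{non}-zero set of a \emph{single} polynomial. Passing from ``codimension $g$ over $\overline{\mathsf{F}_q}$'' to ``at least $q^{N-g-\text{correction}}$ rational points'' is a Lang--Weil-type statement and is not available in the sharp form you wrote; the paper's Lemma~\ref{grvsar} only asserts $\AR \asymp_d \GR$ with unspecified constants, which would not recover the threshold $d+m+\lceil m(d-1)/(q-1)\rceil$. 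There is also a secondary issue: Lemma~\ref{lem1} yields an $(m{+}1)\times(m{+}1)$ minor that is nonzero \emph{as a polynomial}, but over $\mathsf{F}_q$ a nonzero polynomial of large degree can vanish identically on $\mathsf{F}_q^N$, so feeding it into Lemma~\ref{nonzero points} requires care you have not provided.

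The paper avoids both problems by never invoking Lemma~\ref{lem1} or geometric rank. It works only with the $m\times m$ minor $\Delta$ of the first $m$ rows, which is nonzero at the point supplied by hypothesis~(b) and has degree at most $m(d-1)$; Lemma~\ref{nonzero points} then gives $|D(\Delta)| \ge q^{N-\lceil m(d-1)/(q-1)\rceil}$. Assuming for contradiction that no $a\in D(\Delta)$ has $\rank M(a)=m+1$, each such $a$ admits a left-kernel vector $(c_{a,1},\dots,c_{a,m},1)$. By pigeonhole over the $q^m$ possible tuples, some fixed $(c_1,\dots,c_m)$ works on a set $X$ with $|X|\ge q^{N-\lceil m(d-1)/(q-1)\rceil-m}$. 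One then homogenizes: enlarge each $\mathbb{V}_i$ to $\mathbb{V}_i\oplus\mathsf{F}_q$ and define a genuinely $d$-multilinear $F'$ by $F'(v_1\oplus\lambda_1,\dots,v_d\oplus\lambda_d)=F(v)+\sum_i c_i\bigl(\prod_{s\in P_i}\lambda_s\bigr)\rho_i(v)$, where $\rho_i$ is the $i$th row and $P_i$ records the factors in which $\delta_i$ falls short of $(1,\dots,1)$. Then $F'$ vanishes on the embedded copy of $X$, so $\AR(F')\le d+m+\lceil m(d-1)/(q-1)\rceil$, and $\AR(F)\le\AR(F')$ (restriction does not increase analytic rank) contradicts hypothesis~(c). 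The pigeonhole-plus-homogenization step is the idea your outline is missing.
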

\begin{proof}
Denote $N \coloneqq \dim \mathbb{V} = \sum_{i=1}^d \dim \mathbb{V}_i$.  By permuting columns of $M$,  \ref{lem2:item2} allows us to assume that the upper left $m \times m$ submatrix of $M$ is non-singular at some $v\in V$.  Let $\Delta \in R$ be the determinant of this submatrix.  According to \ref{lem2:item1},  $\Delta$ is a nonzero polynomial and the multi-degree of $\Delta$ is at most $\delta_1 + \cdots + \delta_m$ with respect to the partial order $\preceq$.  In particular,  $\deg \Delta \le \sigma (\delta_1 + \cdots + \delta_m) \le m(d-1)$.  Let $D(\Delta) \coloneqq \mathbb{V} \setminus V(\Delta)$.  Lemma~\ref{nonzero points} implies that
\[
\lvert D(\Delta) \rvert \ge q^{N - \left\lceil \frac{m(d-1)}{q-1} \right\rceil}.  
\]

We prove the existence of $b \in D(\Delta)$ such that $M(b)$ has full rank by contradiction.  Suppose  $\rank M(v) \le m$ for each $v\in D(\Delta)$.  The linear system 
\[
\begin{bmatrix}
y_1 & \cdots & y_{m+1} 
\end{bmatrix} M(v) = 0
\]
must have a solution of the form $(c_{v,1},\dots,  c_{v,m},1) \in \mathsf{F}_q^{m + 1}$.  The Pigeonhole principle implies there is a subset $X \subseteq D(\Delta)$ and $(c_1,\dots, c_m) \in \mathsf{F}_q^m$ such that 
\begin{itemize}
\item $(c_1,\dots, c_m,1) M(v) = 0$ for any $v\in X$.
\item $|X| \ge q^{N - \left\lceil \frac{m(d-1)}{q-1} \right\rceil - m}$.
\end{itemize}
By \ref{lem2:item1},  for each $ i \in [m]$,  there is a proper subset $Q_i \subseteq [d]$ such that $M_{i,1},\dots,  M_{i, n} \in \otimes_{s \in Q_i} \mathbb{V}_s^\ast$.  Denote $P_i \coloneqq [d] \setminus Q_i$ and $\mathbb{V}'_i \coloneqq \mathbb{V}_i \oplus \mathbb{F}_q$ for each $i \in [m]$.  We consider a $d$-multilinear map $F': \mathbb{V}'_1 \times \cdots \times \mathbb{V}'_d \to \mathbb{F}^n$ defined by 
\[
F' (v_1 \oplus \lambda_1,\dots,  v_d \oplus \lambda_d) \coloneqq  F(v) + \sum_{i=1}^m c_i \left( \prod_{s \in P_i} \lambda_s \right) \left( M_{i,1}(v),  \dots,  M_{i,n}(v) \right),
\]
where $v \coloneqq (v_1,\dots,  v_d)$.  We observe that $F' (v_1 \oplus 1,\dots,  v_d \oplus 1) = 0$ for any $(v_1,\dots,  v_d) \in X$,  from which we obtain 
\[
\AR(F') \le (N + d) - \log_q |X| \le d + m  + \left\lceil \frac{m(d-1)}{q-1} \right\rceil.
\]
On the other hand,  we have $F'|_{\mathbb{V}} = F$.  This together with \cite[Claim~3.2]{lovett2018analytic} leads to 
\[
\AR(F) \le \AR(F') \le  d + m  + \left\lceil \frac{m(d-1)}{q-1} \right\rceil,
\]
which contradicts to \ref{lem2:item3}. 
\end{proof}
\begin{remark}
The homogeneous polynomials,  the degree and the height in Proposition~\ref{coro1} are respectively replaced by the multi-homogeneous polynomials,  the multi-degree and the analytic rank in Proposition~\ref{lem2}.  This is because the analytic rank for a vector space spanned by multi-linear polynomials is an analogue of the height for an ideal generated by homogeneous polynomials.  Moreover,  Proposition~\ref{lem2}~\ref{lem2:item2} is stronger than Proposition~\ref{coro1}~\ref{lem1:item2},  and the lower bound for the analytic rank in Proposition~\ref{lem2}~\ref{lem2:item3} can be chosen to be independent of $q$,  since $d+h+\left\lceil\frac{h(d-1)}{q-1}\right\rceil\le d+h+h(d-1)=(h+1)d$.  
\end{remark}

In the sequel,  we will also need the following consequence of Serre's inequality of height.
\begin{lemma}[Height is non-increasing under quotient]\label{lem:height}
Let $R$ be a polynomial ring and let $I,J$ be two ideals.  Then we have $\height(J) \ge \height( (I+J)/I )$.
\end{lemma}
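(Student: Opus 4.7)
The plan is to combine Serre's inequality of heights in the regular ring $R$ with the Cohen--Macaulay dimension formula, applied after localizing at a carefully chosen prime.

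First I would pick a minimal prime $\mathfrak{q}$ of $J$ realizing $\height(\mathfrak{q}) = \height(J)$, and then a minimal prime $\mathfrak{s}$ of the (generally non-prime) ideal $I + \mathfrak{q}$. Since $\mathfrak{s} \supseteq I + \mathfrak{q} \supseteq I + J$, the prime $\mathfrak{s}/I$ contains the ideal $(I+J)/I$ of $R/I$ and hence contains some minimal prime of it, yielding $\height((I+J)/I) \le \height(\mathfrak{s}/I)$. It thus suffices to prove $\height(\mathfrak{s}/I) \le \height(J)$.

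Next I would invoke Serre's inequality in the regular local ring $R_\mathfrak{s}$. For any minimal prime $\mathfrak{p}_I$ of $I$ contained in $\mathfrak{s}$, the prime $\mathfrak{s}$ is minimal over $\mathfrak{p}_I + \mathfrak{q}$: any strictly smaller prime lying over $\mathfrak{p}_I + \mathfrak{q}$ would still contain $I + \mathfrak{q}$, contradicting the choice of $\mathfrak{s}$. Serre's inequality then gives $\height(\mathfrak{s}) \le \height(\mathfrak{p}_I) + \height(\mathfrak{q})$. Minimizing the right-hand side over such $\mathfrak{p}_I$ and observing that $\height(IR_\mathfrak{s}) = \min_{\mathfrak{p}_I \subseteq \mathfrak{s}} \height(\mathfrak{p}_I)$, I obtain $\height(\mathfrak{s}) - \height(IR_\mathfrak{s}) \le \height(\mathfrak{q}) = \height(J)$.

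Finally, the Cohen--Macaulay dimension formula applied to the proper ideal $IR_\mathfrak{s}$ in the regular (hence CM) local ring $R_\mathfrak{s}$ gives $\height(\mathfrak{s}) = \height(IR_\mathfrak{s}) + \height(\mathfrak{s}/I)$, and substitution yields $\height(\mathfrak{s}/I) \le \height(J)$, as required. The main subtlety lies in the initial choice to pick $\mathfrak{s}$ minimal over $I + \mathfrak{q}$ rather than over $I + J$; this is what allows Serre's inequality to be coupled with the specific prime $\mathfrak{q}$ realizing $\height(J)$, rather than with an arbitrary minimal prime of $J$ sitting inside a fixed minimal prime of $I + J$, which a priori could have height strictly greater than $\height(J)$.
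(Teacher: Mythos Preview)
Your proof is correct and rests on the same two ingredients as the paper's---Serre's height inequality and the Cohen--Macaulay dimension formula in a regular ring---but the packaging differs. The paper invokes Serre's inequality directly in its global form $\height(I+J)\le\height(I)+\height(J)$ and pairs it with the observation $\height(I+J)\ge\height(I)+\height((I+J)/I)$, obtained by lifting to $R$ a chain of primes in $R/I$ that realizes $\height((I+J)/I)$ and terminates at a minimal prime of $(I+J)/I$; subtraction then finishes. You instead fix a height-realizing minimal prime $\mathfrak{q}$ of $J$, localize at a prime $\mathfrak{s}$ minimal over $I+\mathfrak{q}$, and apply both the local Serre inequality (to $\mathfrak{p}_I$ and $\mathfrak{q}$) and the CM formula inside $R_{\mathfrak{s}}$. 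In effect, your argument unwinds the proof of the global Serre inequality within the lemma itself; the paper's version is a two-line deduction from the cited theorem, while yours makes the prime bookkeeping explicit and clarifies why the particular choice of $\mathfrak{s}$ (over $I+\mathfrak{q}$ rather than $I+J$) is what lets one control $\height(\mathfrak{q})$.
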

\begin{proof}
Denote $h \coloneqq \height( (I+J)/I ) $.  By definition,  there are prime ideals $q_0,\dots,  q_h \subseteq R$ such that $I \subseteq q_0 \subsetneq \cdots \subsetneq q_h $ and $q_h$ is minimal among primes over $I+J$.  Then we have $\height(I + J) \ge \height(q_0) + h \ge \height(I) + h$.  By Serre's inequality of height \cite[Chapter V,  \S B. ~6,  Theorem~3]{serre2000local},  we also have  $\height(I+J)\le \height(I)+ \height(J)$,  which implies $\height(J)\ge h$.
\end{proof}
\section{Bounding isotropy and completeness indices via other invariants}\label{sec:estimate}
This section is devoted to establishing connections between the isotropy and completeness indices and other existing invariants,  including the partition rank,  the geometric rank,  the analytic rank and the height.  Since $\Hom(\mathbb{V}_1 \times \cdots \times \mathbb{V}_d,  \mathbb{W}) \simeq \mathbb{V}_1^\ast \otimes \cdots \mathbb{V}_d^\ast \otimes \mathbb{W}$,  every $d$-multilinear map $F \in \Hom(\mathbb{V}_1 \times \cdots \times \mathbb{V}_d,  \mathbb{W})$ uniquely corresponds to a $(d+1)$-tensor $T_F \in  \mathbb{V}_1^\ast \otimes \cdots \mathbb{V}_d^\ast \otimes \mathbb{W}$.  In the sequel,  we denote 
\begin{equation}\label{eq:rankmap}
\GR(F) \coloneqq \GR(T_F),  \quad  \PR(F) \coloneqq \PR(T_F),\quad \Q(F) \coloneqq \Q(T_F),\quad \AR(F) \coloneqq \AR(T_F).
\end{equation}

We first prove Theorem~\ref{lem-isotropic-general} on the relation between the isotropy index and the partition rank.
\begin{proof}[Proof of Theorem~\ref{lem-isotropic-general}]
Denote $t \coloneqq \PR(F)$ and $n \coloneqq \dim \mathbb{W}$.  By identifying $\mathbb{W}$ with $\mathsf{K}^n$,  we may write $F = (F_1,\dots,  F_n)$, where $F_k \in \Hom(\mathbb{V}_1\times \cdots \times \mathbb{V}_d,  \mathsf{K})$ for each $k \in [n]$.  According to the definition of the partition rank,  for each $j \in [t]$ and $k \in [n]$,  there exist a partition $P_j \sqcup Q_j = [d]$ with $Q_j \ne \emptyset$ and multilinear functions $f_j \in \Hom(\prod_{s \in Q_j} \mathbb{V}_s,  \mathsf{K})$ and $g_{kj} \in \Hom( \prod_{s \in P_j} \mathbb{V}_s,  \mathsf{K})$,  such that $F_k = f_1 g_{k1} + \cdots + f_t g_{kt}$.  Here we naturally identify $f_j,  g_{kj}$ as multilinear polynomials on $\mathbb{V}_1 \times \cdots \times \mathbb{V}_d$.  Although the proofs of \ref{lem-isotropic-general:item1},  \ref{lem-isotropic-general:item2} and \ref{lem-isotropic-general:item3} are all based on the same idea,  each requires a slightly different construction.  Thus,  we provide a detailed proof for each case,  for completeness. 
\begin{itemize}
\item[$\diamond$] We first prove \ref{lem-isotropic-general:item1}.  If $m = 1$ or $m = t$,  the inequality is obvious.  Thus,  we may assume that $2 \le m$ and $t + 1 \le m$.  Suppose $r$ is the minimal positive integer such that $m \le t (r+1)^{d-1}+r+1$.  Since $2 \le m$ and $1 \le t $,  we must have
\[
1 \le r \le tr^{d-1}+r < m < t.
\]

We claim that for each $i \in [d]$,  there are linearly independent vectors $v_{i,1},\dots,  v_{i,r} \in \mathbb{V}_i$ such that 
\[
f_j(v_{1,l_1},\dots,  v_{d,  l_d}) = 0,\quad j \in [t],  \; l_1,\dots,  l_d \in [r].
\]
If the claim is true,  then we have 
\[
F_k(v_{1,l_{1}},\cdots,v_{d,l_{d}})=\sum_{j=1}^{t}f_{j}(v_{1,l_{1}},\cdots,v_{d,l_{d}})g_{kj}(v_{1,l_{1}},\cdots,v_{d,l_{d}})=0
\]
for any $k \in [n],  l_1,\dots,  l_d \in [r]$.  This implies $r \le \alpha(F)$,  and $m \le t \alpha(F)^{d-1} + \alpha(F) + 1$. 

It is left to prove the claim.  We construct $X_s \coloneqq \{ v_{i,l} \in \mathbb{V}_i: (i,l) \in [d] \times [s]\}$ inductively on $s$,  such that 
\begin{enumerate}[label=(a\arabic*)]
\item\label{lem-isotropic-general:item i} $v_{i,1},\dotsm  v_{i,s}$ are linearly independent for each $ i \in [d]$.
\item\label{lem-isotropic-general:item ii} $f_j(v_{1,l_1},\dots,  v_{d,  l_d}) = 0$ for any $ j \in [t]$ and $l_1,\dots,  l_d \in [s]$.
\end{enumerate}
For $s = 1$,  we arbitrarily choose nonzero $v_{i,1} \in \mathbb{V}_{i}$ for each $ i \in [d-1]$,  and choose $v_{d,1}$ to be a nonzero solution of the homogeneous linear system:
\begin{equation}\label{lem-isotropic-general:eq2}
f_j(v_{1,1},\dots,  v_{d-1,1},  z) = 0,\quad j \in [t].
\end{equation}
There are $t$ linear constraints on $\dim \mathbb{V}_d$ variables in \eqref{lem-isotropic-general:eq2}.  A desired $v_{d,1}$ must exist,  since $t \le m - 1 < \dim \mathbb{V}_d$.

Assume that we already have $X_s = \{v_{i,l} \in \mathbb{V}_: (i,l) \in [d] \times [s]\}$ for some $s \in [r-1]$.  We want to find $v_{i,s+1} \in \mathbb{V}_i$ for each $ i \in [d]$ such that $X_{s + 1} = X_s \sqcup \{v_{i,s+1}:  i \in [d]\}$ satisfies \ref{lem-isotropic-general:item i} and \ref{lem-isotropic-general:item ii}.  To this end,  first we notice that $\dim \mathbb{U}_1 \ge n - t s^{d-1} > s$,  where
\[
\mathbb{U}_1 \coloneqq \Bigl\{ z \in \mathbb{V}_1 :
f_j(z,  v_{2,l_2},\dots,v_{d,l_d}) = 0,  \; l_2,\dots,\dots,l_d \in [s],\; j \in [t] \Bigr\}.
\]
Thus,  we may pick $v_{1,  s+1} \in \mathbb{U}_1 \setminus \spa \{v_{1,1},\dots,  v_{1,s}\}$.  Next,  for each $2 \le i \le d$,  we inductively consider 
\begin{equation*}
\begin{split}
\mathbb{U}_i \coloneqq \Bigl\{ z \in \mathbb{V}_i :
&f_j(v_{1,l_1},\dots,  v_{i-1,l_{i-1}},  z,   v_{i,l_i},\dots,   v_{d,l_d}) = 0, \\
& l_1,\dots, l_{i-1} \in [s+1],  \; l_{i+1},  \dots,l_d \in [s],\; j \in [t] \Bigr\}.
\end{split}
\end{equation*}
Since $\dim \mathbb{U}_i \ge m - t (s+1)^{i-1}s^{d - i}> m - t(s+1)^{d-1}  >  s$,  we may pick $v_{i,s+1} \in \mathbb{U}_i \setminus \spa \{v_{i,1},\dots,  v_{i,s}\}$.  It is clear that $v_{1,s+1},\dots,  v_{d,  s+1}$ satisfy the requirement,  and the induction is complete.
\item[$\diamond$] Next,  we prove \ref{lem-isotropic-general:item2}.  Suppose $\mathbb{V}_1 = \cdots = \mathbb{V}_d = \mathbb{V}$.  Let $r$ be the minimal positive integer such that $m \le t\binom{r}{d-1}+ r$.  Then we have $t\binom{r-1 }{d-1} + r -1 < m$.  We claim that there exist $v_1,\dots,  v_r \in \mathbb{V}$ such that 
\begin{enumerate}[label=(b\arabic*)]
\item\label{lem-isotropic-general:item i-1} $v_1,\dots,  v_r$ are linearly independent.  
\item\label{lem-isotropic-general:item i-2} $f_j(v_{l_1},\dots,  v_{l_d}) = 0$ for any $j \in [t]$ and $1 \le l_1 < \cdots < l_d \le r$.
\end{enumerate}
If the claim is true,  then by the same argument as in the proof of \ref{lem-isotropic-general:item1},  we obtain $r \le \alpha_{\w}(F)$ and the inequality follows immediately.

We construct $v_1,\dots,  v_r$ inductively.  Let $v_1 \in \mathbb{V}$ be any nonzero element.   Suppose that for some $s \in [r-1]$,  we have linearly independent $v_1 ,\dots,  v_s \in \mathbb{V}$ such that $f_j(v_{l_1},  \dots,  v_{l_d}) = 0$,  $j \in [t],  1 \le l_1 < \cdots <l_d \le r$.  We consider the vector space
\begin{equation}\label{lem-isotropic-general:eq3}
\mathbb{U}_s \coloneqq \left\lbrace z \in \mathbb{V}: f_j(v_{l_1}, \dots,  v_{l_{d-1}},  z) = 0,\; j \in [t],\; 1 \le l_1 < \cdots < l_{d-1} \le s
\right\rbrace.
\end{equation}
Since $\dim \mathbb{U}_s \ge m - t \binom{s}{d-1} > r-1 \ge s$,  we may pick $v_{s+1} \in \mathbb{U}_s \setminus \spa\{v_1,\dots,  v_s\}$.  It is straightforward to verify that $v_1,\dots,   v_r$ satisfy \ref{lem-isotropic-general:item i-1} and \ref{lem-isotropic-general:item i-2}.
\item[$\diamond$] Lastly,  we prove \ref{lem-isotropic-general:item3}.  Suppose $\mathbb{V}_1 = \cdots = \mathbb{V}_d = \mathbb{V}$.  The inequality trivially holds if $m = t$.  Hence we may assume $t < m$ in the rest of the proof.   Let $r$ be the minimal positive integer such that $m \le t \binom{r + d - 1}{d-1} + r$.  Then we have $t \binom{r + d - 2}{d-1} + r - 1 < m$.  Similar to the proof of \ref{lem-isotropic-general:item2},  it is sufficient to prove the existence of $v_1,\dots,  v_r \in \mathbb{V}$ such that 
\begin{enumerate}[label = (c\arabic*)]
\item $v_1,\dots,  v_r$ are linearly independent. 
\item $f_j(v_{l_1},\dots,  v_{l_d}) = 0$ for any $j \in [t]$ and $1 \le l_1 \le \cdots \le l_d \le r$.
\end{enumerate} 

Again,  we construct $v_1,\dots,  v_r$ inductively.  Since $t < n$ and $\mathsf{K}$ is algebraically closed,  the system of homogeneous polynomials  
\[
f_1(z,\dots,  z) = \cdots = f_t (z,\dots,  z) = 0
\]
has a nonzero solution $z = v_1\in \mathbb{V}$.  Suppose that for some $s \in [r-1]$,  we have linearly independent $v_1 ,\dots,  v_s \in \mathbb{V}$ such that $f_j(v_{l_1},  \dots,  v_{l_d}) = 0$,  $j \in [t],  1 \le l_1 \le \cdots \le l_d \le r$.  We consider the variety 
\begin{align*}
V_s \coloneqq \Bigl\{
z \in \mathbb{V} :
&f_j(v_{l_1}, \dots, v_{l_u}, \underbrace{z, \dots, z}_{\text{$(d-u)$ times}}) = 0,  \\
& j \in [t],\; 0 \le u \le d-1,\; 1 \le l_1 \le \dots \le l_u \le s
\Bigr\}.
\end{align*}
We notice that 
\[
\dim V_s \ge m - t\sum_{u=0}^{d-1} \binom{s + u - 1}{u} = m - t\binom{s + d - 1}{d- 1} \ge m - t\binom{r + d -2}{d-1}> s.
\]
Since $\mathsf{K}$ is algebraically closed, there must exist some nonzero $v_{s+1} \in V_s \setminus \spa\{v_1,\dots,  v_{s}\}$,  and this completes the proof.   \qedhere
\end{itemize}
\end{proof}
\begin{remark}
By Theorem~\ref{lem-isotropic-general},  $\dim \mathbb{V} / \PR(F)$ provides a lower bound for $\alpha_{\w}(F)$ for any $F \in \Alt^d(\mathbb{V},\mathbb{W})$.  A natural question is whether $\alpha_{\w}(F)$ can be bounded from above in terms of $\dim \mathbb{V} / \PR(F)$.  Yet, the following example shows that this is not the case.  Let $\mathcal{G} = (V,E)$ be a graph and let $T_{\mathcal{G}} \in \Alt^2(\mathsf{K}^n,\mathsf{K}^m)$ be its Tutte matrix \cite[page~109]{Tutte47},  where $n \coloneqq |V|$ and $m \coloneqq |E|$.  According to \cite[Proposition~5.1]{qiao2020tur},  we have $\alpha_{\w}( T_{\mathcal{G}} ) = \alpha(\mathcal{G})$,  the independence number of $G$.  Moreover,  we recall from \cite[Corollary~3.3]{herzog2010binomial} that 
\[
\GR( T_{\mathcal{G}} ) = \min \{ 
2n - |S| - c(\mathcal{G}(S)): S\subseteq V
\},
\]
where $\mathcal{G}(S)$ is the induced subgraph of $\mathcal{G}$ on $S$ and $c(\mathcal{H})$ denotes the number of connected components of a graph $\mathcal{H}$.  Thus,  when $\mathcal{G}$ is the complete bipartite graph $\mathcal{K}_{t,t}$ with parts of size $t$,  we have $n = 2t$,  $\alpha_{\w}(T_{\mathcal{K}_{t,t}}) = \alpha(\mathcal{K}_{t,t}) = t$ and $\GR(T_{\mathcal{K}_{t,t}}) = 2 t - 1$.  By \cite[Theorem~5]{kopparty2020geometric},  we have $\GR( T_{\mathcal{K}_{t,t}} ) \le \PR( T_{\mathcal{K}_{t,t}} )$.  Since $t$ can be arbitrarily large whereas $n/\PR( T_{\mathcal{K}_{t,t}} ) \le n/\GR( T_{\mathcal{K}_{t,t}} )  \le 2$,  we may conclude that there is no function $f: \mathbb{N} \to \mathbb{N}$ such that $\alpha_{\w}( T_{\mathcal{K}_{t,t}} ) \le f (n/\PR( T_{\mathcal{K}_{t,t}} ))$ for any $t$.
\end{remark}

In what follows,  we establish Theorem~\ref{lem-complete1} which provides lower bounds on the completeness index in terms of the geometric rank,  the analytic rank and the height,  respectively.  In contrast to Theorem~\ref{lem-isotropic-general} which states that a small partition rank forces a large isotropic index,  we show that the larger the geometric rank,  the analytic rank or the height,  the larger the completeness index.  Before we proceed,  we recall from \eqref{eq:polar} and \eqref{eq:LF} that,  when $\ch(\mathsf{K})=0$ or $\ch(\mathsf{K}) > d$,  every $F \in \Sym^d(\mathbb{V},\mathbb{W})$ determines a subspace $\mathbb{L}_F \subseteq \mathsf{K}[x_1,\dots,x_m]_d$ with $m = \dim \mathbb{V}$.  We denote by $\mathfrak{a}_F$ the ideal generated by $\mathbb{L}_F$ in $\mathsf{K}[x_1,\dots,x_m]$.  
\begin{proof}[Proof of Theorem~\ref{lem-complete1}]
For simplicity,  we set
\[
L_s \coloneqq s^d,\quad N_s \coloneqq \binom{s}{d},\quad s \in \mathbb{N}.
\]
We identify $\mathbb{W}$ with $\mathsf{K}^n$ so that we can write each $F \in \Hom(\mathbb{V}^d,  \mathbb{W})$ as $F = (F_1,\dots,  F_n)$ for some $F_1,\dots,  F_n \in \Hom(\mathbb{V}^d,  \mathsf{K})$.  Denote $R \coloneqq \mathsf{K}[x_{1,1},\dots,   x_{d,m}]$,  the polynomial ring over $\mathsf{K}$ with $dm$ variables $x_{1,1},\dots,  x_{d,m}$.  For each $i \in [d]$,  we write $w_i \coloneqq (x_{i,1},\dots,  x_{i,m})$.  
\begin{itemize}[listparindent=1em]
\item[$\diamond$] We first prove \ref{lem-complete1:gen}.  For each $s \in \mathbb{N}$,  we define 
\begin{equation}\label{lem-complete1:gen:eq-1}
\mathcal{L}_s \coloneqq \left\lbrace
(j_1,\dots,  j_d) \in \mathbb{N}^d: j_1,\dots  j_d \in [s]
\right\rbrace.
\end{equation}
We denote the $i$-th element of $\mathbf{L} \in \mathcal{L}_s$ by $\mathbf{L}(i)$ for each $ i \in [d]$,  and we arbitrarily order elements in $\mathcal{L}_s$ as:
\begin{equation}\label{lem-complete1:gen:eq0}
\mathbf{L}_1 <  \dots < \mathbf{L}_{L_s}.
\end{equation}
\begin{itemize}[listparindent=1em]
\item[$\vartriangleright$] Assume that $\mathsf{K}$ is an infinite field.  Denote $t \coloneqq \GR(F)$.  Suppose that $r$ is the maximal integer such that $L_r \le t$.  Then we have $L_{r+1} > t$.  We claim that there exist $v_{i,j}\in \mathbb{V}_i,  (i,j) \in [d]\times [r]$ such that 
\[ 
F(v_{1,j_1},\dots,  v_{d,j_d})\in \mathbb{W},\quad (j_1,\dots,  j_d) \in \mathcal{L}_r
\]
are linearly independent.  This implies that $\beta (F) \ge r$ and the inequality follows immediately. 
 
To prove the claim,  we show inductively that for each $k \in [L_r]$,  there exist $v_{k,i,j} \in \mathbb{V}_i,  (i,j) \in [d] \times [r]$ such that 
\[ 
F(v_{k, 1,\mathbf{L}_1(1)},\dots,  v_{k, d,\mathbf{L}_1(d)}),\dots,  F(v_{k, 1,\mathbf{L}_k(1)},\dots,  v_{k, d,\mathbf{L}_k(d)}) \in \mathbb{W}
\]
are linearly independent.  Then we obtain the claimed vectors by taking $v_{i,j} = v_{L_r, i,  j}$ for $1 \le i \le d,  1 \le j \le r$.  

If $k = 1$,  there exist $u_1\in \mathbb{V}_1,\dots,  u_d \in \mathbb{V}_d$ such that $F(u_1,\dots,  u_d) \ne 0$ since $F$ is nonzero.  Thus,  we may extend $u_1,\dots,  u_d$ and reindex them to obtain $\{ v_{k,i,j} \in \mathbb{V}_i:  (i,j) \in [d] \times [r]\}$ such that 
\[
F(v_{1,1,\mathbf{L}_1(1)},  \dots,  v_{1,d,\mathbf{L}_1(d)}) = F(u_1,\dots,  u_d) \ne 0.
\]

Suppose that we already have $\{ v_{k,i,j} \in \mathbb{V}_i: (i,j) \in [d] \times [r]\}$ for some $k \in [L_r - 1]$.  We consider the matrix $M \in R^{(k+1) \times n}$ defined as 
\begin{equation}\label{lem-complete1:gen:eq1}
M(w_1,\dots,  w_d) \coloneqq \begin{bmatrix}
F_1(w'_{1,1},\dots,  w'_{1,d}) & \cdots & F_n(w'_{1,1},\dots,  w'_{1,d}) \\
\vdots & \ddots & \vdots  \\
F_1(w'_{k,1},\dots,  w'_{k,d}) & \cdots & F_n(w'_{k,1},\dots,  w'_{k,d}) \\
F_1(w_1,\dots,  w_d) & \cdots &  F_n(w_1,\dots,  w_d)
\end{bmatrix},
\end{equation}
where for each $a \in [k]$ and $b \in [d]$,  
\begin{equation}\label{lem-complete1:gen:eq2}
w'_{a,b} \coloneqq 
\begin{cases}
w_{b} \quad &\text{if~} \mathbf{L}_a(b) = \mathbf{L}_{k+1}(b)  \\ 
v_{k,  b,  \mathbf{L}_a(b)} \quad &\text{otherwise}
\end{cases}.
\end{equation}
Clearly,  the first $k$ rows of $M(v_{k,1,\mathbf{L}_{k+1}(1)},\dots,  v_{k,d, \mathbf{L}_{k+1}(d)})$ are linearly independent by the induction hypothesis,  and elements in the $s$-th row of $M$ are homogeneous of the same degree $d_s$ for each $s \in [k+1]$.

Since $\mathbf{L}_a \ne \mathbf{L}_{k+1}$ for any $a \in [k]$,  we derive that $d_s < d_{k+1}$ if $s \in [k]$.  Moreover,  we have 
\[
\height \mathfrak{a} = \GR(F) = t > L_r - 1 \ge k,
\] 
where $\mathfrak{a}$ is the ideal of $R$ generated by the last row of $M$.  By Proposition~\ref{coro1},  there exist $u_1\in \mathbb{V}_1,\dots,  u_d \in \mathbb{V}_d$ such that $\rank M(u_1,\dots,  u_d) = k+1$.

We define for each $(i,j) \in [d] \times [r]$ that 
\begin{equation}\label{lem-complete1:gen:eq3}
v_{k+1, i,  j} \coloneqq \begin{cases}
u_j \quad &\text{if~} j = \mathbf{L}_{k+1}(i) \\
v_{k,i,  j} \quad &\text{otherwise} 
\end{cases}.
\end{equation}
Then we have 
\[
\begin{bmatrix}
F(v_{k+1,1,\mathbf{L}_1(1)},\dots,  v_{k+1,d,\mathbf{L}_1(d)}) \\
\vdots \\
F(v_{k+1,1,\mathbf{L}_k(1)},\dots,  v_{k+1,d,\mathbf{L}_k(d)})
\end{bmatrix} = M(u_1,\dots,  u_d)
\]
and this completes the proof.

\item[$\vartriangleright$] Suppose $\lvert \mathsf{K} \rvert = q$.  In this case,  we denote $t \coloneqq \AR(F)$ and let $r$ be the maximal integer such that $d+N_r +\lceil L_r (d-1)/(q-1) \rceil \le t$. This implies $t < d+L_{r+1} +\lceil L_{r+1} (d-1)/(q-1) \rceil$.  We claim that there is a subset $\{v_{i,j} \in \mathbb{V}_i:  (i,j) \in [d] \times [r] \}$ such that 
\[
F(v_{1,j_1},\dots,  v_{d,j_d}) \in \mathbb{W},\quad (j_1,\dots,  j_d) \in \mathcal{L}_r
\]
are linearly independent,  so that $r \le \beta (F)$ and the desired inequality is obtained.  

The rest of the proof is similar to that for the case $|\mathsf{K}| = \infty$,  so we will only provide a sketch.  We want to show that for each $1 \le k \le L_r$,  there exist $\{ v_{k,i,j} \in \mathbb{V}_i: (i,j) \in [d] \times [r] \}$ such that 
\[ 
F(v_{k, 1,\mathbf{L}_1(1)},\dots,  v_{k, d,\mathbf{L}_1(d)}),\dots,  F(v_{k, 1,\mathbf{L}_k(1)},\dots,  v_{k, d,\mathbf{L}_k(d)}) \in \mathbb{W}
\]
are linearly independent.  This is obvious for $k = 1$ since $F$ is nonzero.  We assume that $\{ v_{k,i,j} \in \mathbb{V}_i: (i,j) \in [d] \times [r] \}$ is given for some $k \in [L_r - 1]$.  We observe that the algebra $R$ is $\mathbb{N}^d$-graded since $R \simeq  S^{\otimes d}$,  where $S$ is the polynomial ring over $\mathsf{K}$ with $m$ variables.  In particular,  $F_1,\dots,  F_n$ as elements of $R$,  have multi-degree $(1,\dots, 1) \in \mathbb{N}^d$.  Moreover,  we recall that $\mathbb{N}^d$ is equipped with the partial order $\preceq$ defined by \eqref{eq:order}.  Let $M \in R^{ (k+1) \times n }$ be the matrix defined by \eqref{lem-complete1:eq1} and \eqref{lem-complete1:eq2}.  It is straightforward to verify that $M$ satisfies \ref{lem2:item1}--\ref{lem2:item3} in Proposition~\ref{lem2}.  Thus,  there exist $u_1 \in \mathbb{V}_1,\dots,  u_d \in \mathbb{V}_d$ such that $\rank M(u_1,\dots,  u_d) = k+1$.  The induction is complete if we define $v_{k+1,i,j}$ as in \eqref{lem-complete1:gen:eq3}. 
\end{itemize}

\item[$\diamond$] The proof of \ref{lem-complete1:alt} shares the same idea as that of \ref{lem-complete1:gen},  but differs slightly because of the symmetries of alternating multilinear forms.  For each $s \in \mathbb{N}$,  we define 
\[
\mathcal{J}_s \coloneqq \left\lbrace
(j_1,\dots,  j_d) \in \mathbb{N}^d: 1 \le j_1 < \cdots < j_d \le s
\right\rbrace.
\]
We denote the $i$-th element of $\mathbf{J} \in \mathcal{J}_s$ by $\mathbf{J}(i)$ for each $ i \in [d]$,  and we arbitrarily order elements in $\mathcal{J}_s$ as:
\[
\mathbf{J}_1 <  \dots < \mathbf{J}_{N_s}.
\]
\begin{itemize}[listparindent=1em]
\item[$\vartriangleright$] Assume that $\mathsf{K}$ is an infinite field.  Denote $t \coloneqq \GR(F)$.  Suppose that $r$ is the maximal integer such that $N_r \le t$.  Then we have $N_{r+1} > t$.  We claim that there exist $v_1,\dots,    v_r \in \mathbb{V}$ such that $F(v_{j_1},\dots,  v_{j_d})\in \mathbb{W}, (j_1,\dots,  j_d) \in \mathcal{J}_r$ are linearly independent.  This implies that $\beta_{\w} (F) \ge r$ and the inequality follows immediately. 

To prove the claim,  we show inductively that for each $k \in [N_r]$,  there are $v_{k,1},\dots,  v_{k,r}$ in $\mathbb{V}$ such that $F(v_{k, \mathbf{J}_1(1)},\dots,  v_{k, \mathbf{J}_1(d)}),\dots,  F(v_{k, \mathbf{J}_k(1)},\dots,  v_{k, \mathbf{J}_k(d)}) \in \mathbb{W}$ are linearly independent.  Then we obtain the claimed vectors by taking $v_j = v_{N_r, j}$ for $j \in [r]$.  

If $k = 1$,  there exist $u_1,\dots,  u_d \in \mathbb{V}$ such that $F(u_1,\dots,  u_d) \ne 0$ since $F$ is nonzero.  Thus,  we may extend $u_1,\dots,  u_d$ and reorder them to obtain $v_{1,1},\dots,  v_{1,d}$ such that 
\[
F(v_{1,\mathbf{J}_1(1)},  \dots,  v_{1,\mathbf{J}_1(d)}) = F(u_1,\dots,  u_d) \ne 0.
\]

Suppose that we already have $v_{k,1},\dots,  v_{k,r} \in \mathbb{V}$ for some $k \in [N_r - 1]$.  We consider the matrix $M \in R^{(k+1) \times n}$ defined as 
\begin{equation}\label{lem-complete1:eq1}
M(w_1,\dots,  w_d) \coloneqq \begin{bmatrix}
F_1(w'_{1,1},\dots,  w'_{1,d}) & \cdots & F_n(w'_{1,1},\dots,  w'_{1,d}) \\
\vdots & \ddots & \vdots  \\
F_1(w'_{k,1},\dots,  w'_{k,d}) & \cdots & F_n(w'_{k,1},\dots,  w'_{k,d}) \\
F_1(w_1,\dots,  w_d) & \cdots &  F_n(w_1,\dots,  w_d)
\end{bmatrix},
\end{equation}
where for each $a \in [k]$ and $b \in [d]$,  
\begin{equation}\label{lem-complete1:eq2}
w'_{a,b} \coloneqq 
\begin{cases}
w_{c} \quad &\text{if~} \mathbf{J}_a(b) = \mathbf{J}_{k+1}(c) \text{~for some~} c \in [d] \\ 
v_{k,  \mathbf{J}_a(b)} \quad &\text{otherwise}
\end{cases}.
\end{equation}
Clearly,  the first $k$ rows of $M(v_{k,\mathbf{J}_{k+1}(1)},\dots,  v_{k,\mathbf{J}_{k+1}(d)})$ are linearly independent by the induction hypothesis,  and elements in the $s$-th row of $M$ are homogeneous of the same degree $d_s$,  $s \in [k+1]$.  Since elements in $\mathbf{J}_a$ and $\mathbf{J}_{k+1}$ can not be exactly the same for any $a \in [k]$,  we derive that $d_s < d_{k+1}$ if $s \in [k]$.  Moreover,  we have 
\[
\height \mathfrak{a} = \GR(F) = t > N_r - 1 \ge k,
\] 
where $\mathfrak{a}$ is the ideal of $R$ generated by the last row of $M$.  By Proposition~\ref{coro1},  there exists $u_1,\dots,  u_d \in \mathsf{K}^m$ such that $\rank M(u_1,\dots,  u_d) = k+1$.

We define for each $1 \le j \le r$ that 
\begin{equation}\label{lem-complete1:eq3}
v_{k+1,  j} \coloneqq \begin{cases}
u_c \quad &\text{if~} j = \mathbf{J}_{k+1}(c) \text{~for some~} c \in [d] \\
v_{k,j} \quad &\text{otherwise} 
\end{cases},
\end{equation}
and this completes the proof.

\item[$\vartriangleright$] Suppose $\lvert \mathsf{K} \rvert = q$.  In this case,  we denote $t \coloneqq \AR(F)$ and let $r$ be the maximal integer such that $d+N_r +\lceil N_r (d-1)/(q-1) \rceil \le t$. This implies $t < d+N_{r+1} +\lceil N_{r+1} (d-1)/(q-1) \rceil$.  We claim that there are $v_1,\dots,  v_r \in \mathbb{V}$ such that 
\[
F(v_{j_1},\dots,  v_{j_d}) \in \mathbb{W},\quad (j_1,\dots,  j_d) \in \mathcal{J}_r
\]
are linearly independent,  so that $r \le \beta_{\w} (F)$ and the desired inequality is obtained.  The rest of the proof is exactly the same as that for \ref{lem-complete1:gen},  thus we omit details.  
\end{itemize}

\item[$\diamond$] The proof for \ref{lem-complete1:sym} is a simple modification of that for \ref{lem-complete1:alt}.  Suppose either $\ch(\mathsf{K})=0$ or $\ch(\mathsf{K}) > d$ and $|\mathsf{K}| >  \height(\mathfrak{a}_F)$.  By Proposition~\ref{lem:poly=sym} and Lemma~\ref{lem:height},  we have $\beta_{\s} (F) = \beta_{\p}(L_{\mathbb{F}}) \le \height (\mathfrak{a}_F)$.  To prove the other inequality,  we consider for each $s \in \mathbb{N}$ the set
\[
\mathcal{S}_s \coloneqq \left\lbrace
(j_1,\dots,  j_d) \in \mathbb{N}^d: 1 \le j_1 \le \cdots \le j_d \le s
\right\rbrace,
\]
and we denote the $i$-th element of $\mathbf{S} \in \mathcal{S}_s$ by $\mathbf{S}(i)$ for each $i \in [d]$,  and we arbitrarily order elements in $\mathcal{S}_s$ as:
\[
\mathbf{S}_1 <  \dots < \mathbf{S}_{N_{s + d-1}}.
\]
Similar to the proof of \ref{lem-complete1:alt},  we show that for each $k \in [N_{r + d - 1}]$,  there are $v_{k,1},\dots,  v_{k,r} \in \mathbb{V}$ such that 
\[
F(v_{k, \mathbf{S}_1(1)},\dots,  v_{k,\mathbf{S}_1(d)}),\dots,  F(v_{k, \mathbf{S}_k(1)},\dots,  v_{k,\mathbf{S}_k(d)}) \in \mathbb{W}
\] 
are linearly independent,  where $r$ is the minimal positive integer satisfying $ \height (\mathfrak{a}_F) < N_{r + d}$.  We obtain the inequality when $k = N_{r + d - 1}$.  

The proof for $k = 1$ is trivial as $F$ is nonzero.  Suppose we already have $v_{k,1},\dots,  v_{k,r}$ for some $k \in [N_{r + d - 1}-1]$.  We construct $v_{k+1,1},\dots,  v_{k+1,r}$ by considering the matrix $M = (M_{a,b}) \in R^{(k+1) \times m}$ defined in a similar way as in \eqref{lem-complete1:eq1} and \eqref{lem-complete1:eq2}.  By construction,  we have $\deg_{x_{i,j}} M_{a,b} \le 1$ for any $(i,j,a,b) \in [d] \times [m] \times [k+1] \times [m]$.  This implies 
\[
|\mathsf{K}| > \height(\mathfrak{a}_F) \ge N_{r + d - 1} \ge k + 1.
\]
Moreover,  we notice that $\height (\mathfrak{a}) \ge \height(\mathfrak{a}_F) >  k$.  Here $\mathfrak{a}$ is the ideal of $R$ generated by $F_1,\dots,  F_n$,  and the first inequality follows from $q(\mathfrak{a}) = \mathfrak{a}_F$ where $q: R \to  \mathsf{K}[x_1,\dots,  x_m]$ is the map induced by 
$q(x_{i,j}) = x_j$ for $(i,j) \in [d] \times [m]$.  Thus,  conditions in Proposition~\ref{coro1} are satisfied by $M$ and $\mathsf{K}$,  and this completes the induction step.  

Lastly,  suppose that $\mathsf{K}$ is an infinite field.  If we let $r$ be the minimal positive integer such that $\GR(F) = \height (\mathfrak{a}) < N_{r+d}$ and repeat the above argument,  then we obtain the ``moreover" part.   \qedhere
\end{itemize}
\end{proof} 

\section{Applications}\label{sec:applications}
We consider three applications of Theorems~\ref{lem-isotropic-general} and \ref{lem-complete1}.  First,  we compare the tensor ranks defined in Subsection~\ref{subsec:prelim-rank},  each of which characterizes a salient structural feature of tensors.  For the second application,  we establish a Ramsey-type result for multilinear maps,  and discuss its implications for linear algebra,  group theory and algebraic geometry.  The last application focuses on computational aspects of completeness indices, tensor ranks and height of polynomial ideals. 
\subsection{Tensor ranks}\label{subsec:rank}
Theorem~\ref{lem-complete1} shows that the geometric rank is bounded above by a power of the completeness index.  In what follows,  we relate the subrank to the geometric rank through the completeness index. 
\begin{proposition}[Completeness index vs.  tensor ranks]\label{main theorem 2} 
Let $\mathsf{K}$ be a field and let $\mathbb{V}_1,\dots,  \mathbb{V}_d$ be vector spaces over $\mathsf{K}$.  Given $F \in \Hom(\mathbb{V}_1 \times \cdots \times \mathbb{V}_d,  \mathbb{W})$,  we have 
\begin{equation}\label{main theorem 2:eq1}
\beta(F)  \le \Q(F) \le \GR(F)  < (\beta(F)+1)^d
\end{equation}
if $|\mathsf{K}| = \infty$,  and 
\begin{equation}\label{main theorem 2:eq2}
\beta(F) \le \Q(F) \le \GR(F) \asymp_d \AR(F)  <  d + (\beta(F) + 1)^d  +  \left\lceil\frac{(\beta(F)+1)^{d}(d-1)}{q-1} \right\rceil
\end{equation}
if $|\mathsf{K}| = q$.  Moreover,  for $\mathbb{V}_1 = \cdots = \mathbb{V}_d = \mathbb{V}$ and $F \in \Alt^d(\mathbb{V},  \mathbb{W})$ (resp.  $F \in \Sym^d(\mathbb{V},  \mathbb{W})$),  the inequalities in \eqref{main theorem 2:eq1} and \eqref{main theorem 2:eq2} remain valid if we replace the rightmost terms by the upper bounds in Theorem~\ref{lem-complete1}--\ref{lem-complete1:alt} (resp.  Theorem~\ref{lem-complete1}--\ref{lem-complete1:sym}). 
\end{proposition}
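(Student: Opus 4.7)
The chain $\beta(F)\le\Q(F)\le\GR(F)<(\beta(F)+1)^d$ splits into three independent links. The rightmost is exactly Theorem~\ref{lem-complete1}\ref{lem-complete1:gen}. The middle one, $\Q(F)\le\GR(F)$, combines the monotonicity of the geometric rank under restriction with the elementary fact $\GR(I_r)=r$; monotonicity follows from the identity $\langle(g_1,\dots,g_k)\cdot T,\ell_1\otimes\cdots\otimes\ell_{k-1}\rangle=g_k\bigl(\langle T,g_1^{*}\ell_1\otimes\cdots\otimes g_{k-1}^{*}\ell_{k-1}\rangle\bigr)$, which shows that whenever $S\unlhd T$ via $(g_1,\dots,g_k)$, the preimage under $(g_1^{*},\dots,g_{k-1}^{*})$ of the vanishing locus of $T$ sits inside the vanishing locus of $S$, so $\GR(S)\le\GR(T)$. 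The only substantive link is therefore the leftmost inequality $\beta(F)\le\Q(F)$.

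To prove that link, set $s:=\beta(F)$ and pick complete subspaces $\mathbb{U}_i\subseteq\mathbb{V}_i$ of dimension $s$, each with a basis $u_{i,1},\dots,u_{i,s}$. By the definition of completeness, $F$ restricted to $\mathbb{U}_1\otimes\cdots\otimes\mathbb{U}_d$ is injective, so the $s^d$ images $w_{j_1,\dots,j_d}:=F(u_{1,j_1},\dots,u_{d,j_d})$ with $(j_1,\dots,j_d)\in[s]^d$ are linearly independent in $\mathbb{W}$. I can therefore choose $\ell_1,\dots,\ell_s\in\mathbb{W}^\ast$ satisfying $\ell_k(w_{j_1,\dots,j_d})=1$ if $j_1=\dots=j_d=k$ and $0$ otherwise. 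Setting
\[
g_i(\varphi):=(\varphi(u_{i,1}),\dots,\varphi(u_{i,s}))\in\mathsf{K}^s\quad(\varphi\in\mathbb{V}_i^\ast),\qquad h(w):=(\ell_1(w),\dots,\ell_s(w))\in\mathsf{K}^s,
\]
a one-line unwinding of the contraction identity $\langle T_F,v_1\otimes\cdots\otimes v_d\rangle=F(v_1,\dots,v_d)$ shows that $(g_1,\dots,g_d,h)\cdot T_F=I_s$, so $I_s\unlhd T_F$ and $\Q(F)\ge s$.

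The finite-field version \eqref{main theorem 2:eq2} is then obtained by additionally invoking Lemma~\ref{grvsar} ($\GR\asymp_d\AR$) together with the finite-field half of Theorem~\ref{lem-complete1}\ref{lem-complete1:gen}. The alternating and symmetric ``moreover'' clauses keep the \emph{general} completeness index $\beta(F)$ on the left---not $\beta_\w(F)$ or $\beta_\s(F)$---so the leftmost argument above transports verbatim; only the rightmost bound is replaced by Theorem~\ref{lem-complete1}\ref{lem-complete1:alt} or \ref{lem-complete1:sym}. I foresee no real obstacle: the crux is simply recognising that completeness of the product $\mathbb{U}_1\times\cdots\times\mathbb{U}_d$ upgrades to injectivity of $F$ on the full tensor product $\mathbb{U}_1\otimes\cdots\otimes\mathbb{U}_d$, after which the identity-tensor extraction via the bases $\{u_{i,j}\}$ and the functionals $\{\ell_k\}$ is purely formal.
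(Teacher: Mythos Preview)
Your proof is correct and follows the same route as the paper's: the rightmost inequality is Theorem~\ref{lem-complete1}, the middle one is monotonicity of $\GR$ under restriction together with $\GR(I_r)=r$ (the paper simply cites \cite[Theorem~1]{kopparty2020geometric} for this), and for $\beta(F)\le\Q(F)$ the paper writes the same idea more tersely as ``$F|_{\mathbb{U}}$ is injective on $\mathbb{U}_1\otimes\cdots\otimes\mathbb{U}_d$, hence $I_{r,d}\unlhd F|_{\mathbb{U}}\unlhd F$'', which your explicit construction of $(g_1,\dots,g_d,h)$ unpacks. Your reading of the ``moreover'' clause is also the paper's: only the rightmost bound is replaced, so the leftmost link needs no new argument.
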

\begin{proof}
The inequality $\Q(F)\le \GR(F)$ follows from \cite[Theorem~1]{kopparty2020geometric}.  By \cite[Proposition~4.4]{chen2024stability} and \cite[Theorem~3]{moshkovitz2024uniform},  we have $\GR(F) \asymp \AR(F)$ in which constants only depend on $d$.  According to Theorem~\ref{lem-complete1}--\ref{lem-complete1:gen},  we have $\GR(F)< (\beta(F)+1)^{d}$ if $|\mathsf{K}| = \infty$,  and $\AR(F)< d + (\beta(F) + 1)^d  +  \left\lceil (\beta(F)+1)^{d}(d-1)/(q-1) \right\rceil$ if $|\mathsf{K}| = q$.  Therefore,  it is left to prove $r \coloneqq \beta(F)\le \Q(F)$.  

Let $\mathbb{U} \coloneqq \mathbb{U}_1 \times \cdots \times \mathbb{U}_d$ be a complete subspace of $F$ with $\dim \mathbb{U}_1 = \cdots = \dim \mathbb{U}_d = r$. Suppose that $I_{r,d}$ is the map defined as in \eqref{prop:ex:item1:eq1}.  As an element in $\Hom(\mathbb{U}_1 \otimes \cdots \otimes \mathbb{U}_d,  \mathbb{W})$,  the restriction map $F|_{\mathbb{U}}$ is injective.  Thus,  we must have $I_{r,d} \unlhd F|_{\mathbb{U}} \unlhd F$,  which implies $r \le \Q(F)$.
\end{proof}
To proceed,  we need the following basic fact from algebraic geometry,  for which we supply a proof in the absence of an appropriate reference.  
\begin{lemma}\label{lem:density}
Let $\mathsf{K}$ be an infinite field.  Then $\mathsf{K}^n$ is Zariski dense in $\overline{\mathsf{K}}^n$. 
\end{lemma}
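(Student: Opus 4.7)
The plan is to prove that no nonzero polynomial $f \in \overline{\mathsf{K}}[x_1, \ldots, x_n]$ vanishes identically on $\mathsf{K}^n$, which is exactly the statement that the Zariski closure of $\mathsf{K}^n$ in $\overline{\mathsf{K}}^n$ is all of $\overline{\mathsf{K}}^n$. The natural tool is the combinatorial Nullstellensatz (Lemma~\ref{comb null}), since that lemma is precisely designed to produce a non-vanishing evaluation point inside a product of subsets of a specified size.

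First, I would fix any nonzero $f \in \overline{\mathsf{K}}[x_1, \ldots, x_n]$ and set $d_i \coloneqq \deg_{x_i} f$ for each $i \in [n]$. Because $\mathsf{K}$ is infinite, for every $i$ I can pick a finite subset $S_i \subseteq \mathsf{K}$ with $|S_i| > d_i$. Then I would apply Lemma~\ref{comb null} to $f$ viewed as a polynomial over the field $\overline{\mathsf{K}}$, with the subsets $S_1, \ldots, S_n$ (which live inside $\mathsf{K}\subseteq \overline{\mathsf{K}}$). This produces an $a \in S_1 \times \cdots \times S_n \subseteq \mathsf{K}^n$ with $f(a) \ne 0$. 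Hence the vanishing ideal of $\mathsf{K}^n$ in $\overline{\mathsf{K}}[x_1,\ldots,x_n]$ is the zero ideal, and the Zariski closure of $\mathsf{K}^n$ in $\overline{\mathsf{K}}^n$ is $V(0) = \overline{\mathsf{K}}^n$.

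There is essentially no obstacle here: the only point to verify is that Lemma~\ref{comb null}, stated for a single field containing both the coefficients and the evaluation sets, legitimately applies when we take that field to be $\overline{\mathsf{K}}$ and the evaluation sets to lie in the subfield $\mathsf{K}$. This is immediate, since subsets of $\mathsf{K}$ are a fortiori subsets of $\overline{\mathsf{K}}$. (Equivalently, one could avoid invoking Lemma~\ref{comb null} and proceed by induction on $n$: the $n=1$ case uses that a nonzero univariate polynomial over $\overline{\mathsf{K}}$ has finitely many roots while $\mathsf{K}$ is infinite, and the induction step writes $f = \sum_i g_i(x_1,\ldots,x_{n-1}) x_n^i$ and applies the $n=1$ case fibrewise, then the inductive hypothesis to each $g_i$.) Either route yields the result in a few lines.
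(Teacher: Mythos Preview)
Your proof is correct. The route, however, differs from the paper's. The paper fixes a nonzero $f\in\overline{\mathsf{K}}[x_1,\dots,x_n]$ vanishing on $\mathsf{K}^n$, writes $f=\sum_{i=1}^s \alpha_i f_i$ with $\alpha_1,\dots,\alpha_s\in\overline{\mathsf{K}}$ linearly independent over $\mathsf{K}$ and $f_i\in\mathsf{K}[x_1,\dots,x_n]$, observes that $f(a)=0$ for $a\in\mathsf{K}^n$ forces each $f_i(a)=0$, and then invokes the classical fact (cited to Cohn) that a polynomial over an infinite field vanishing on all of $\mathsf{K}^n$ is identically zero. Your argument instead works directly with $f$ over $\overline{\mathsf{K}}$ and applies Lemma~\ref{comb null} to finite sets $S_i\subseteq\mathsf{K}\subseteq\overline{\mathsf{K}}$, avoiding the coefficient-decomposition step entirely. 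Your approach is shorter and stays entirely within tools already stated in the paper; the paper's approach has the mild advantage of isolating the reduction to $\mathsf{K}$-coefficients as a separate conceptual step, but at the cost of an external citation. Both are equally valid for this elementary lemma.
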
 
\begin{proof}
Let $X$ be the Zariski closure of $\mathsf{K}^n$ in $\overline{\mathsf{K}}^n$.  Suppose $X \ne \overline{\mathsf{K}}^n$.  Then there is some nonzero polynomial $f \in \overline{\mathsf{K}}[x_1,\dots,  x_n]$ such that $f(a) = 0$ for any $a \in X$.  Since coefficients of $f$ are algebraic over $\mathsf{K}$,  we may write $f = \sum_{i=1}^s \alpha_i f_i$,  where $\alpha_1,\dots,  \alpha_s \in \overline{\mathsf{K}}$ are linearly independent over $\mathsf{K}$ and $f_1,\dots,  f_s \in \mathsf{K}[x_1,\dots,  x_n]$.  For each $a \in \mathsf{K}^n$,  we have $0 = f(a) = \sum_{i=1}^s \alpha_i f_i(a)$.  This implies $f_i(a) = 0$ for each $i \in [s]$. Now that $|\mathsf{K}| = \infty$,  we must have $f_1= \cdots = f_s = 0$ \cite[page~228]{Cohn03},  which contradicts to the fact that $f$ is nonzero. 
\end{proof}
Now we are ready to prove Theorem~\ref{cor:GvsS}.
\begin{proof}[Proof of Theorem~\ref{cor:GvsS}]
The relation $\Q(T) = \Omega_k(\GR(T)^{1/(k-1)})$ is a direct consequence of Proposition~\ref{main theorem 2}.  It is sufficient to prove the existence of $T_0$.  Denote $n \coloneqq \min \{\dim \mathbb{V}_j:  j \in [k]\}$.  Without loss of generality,  we may assume $\mathbb{V}_1 = \cdots = \mathbb{V}_k = \mathsf{K}^n$.  
\begin{itemize}
\item[$\diamond$] We first suppose that $|\mathsf{K}| = \infty$.  According to \cite[Theorem~3.7]{pielasa2025exact},  $\Q(S)= \lfloor\left(kn-k+1\right)^{1/(k-1)}\rfloor$ for any $S \in U_1$,  where $U_1 \subseteq (\mathsf{K}^{n})^{\otimes k}$ is nonempty and Zariski open.  By \cite[Lemma~4.1]{chen2025bounds} or \cite[Lemma~5.3]{kopparty2020geometric},  the geometric rank is lower semi-continuous.  Thus,  a generic $S \in (\overline{\mathsf{K}}^{n})^{\otimes k}$ has geometric rank $n$.  Since $\mathsf{K}$ is infinite,   $(\mathsf{K}^{n})^{\otimes k}$ is dense in $(\overline{\mathsf{K}}^{n})^{\otimes k}$ by Lemma~\ref{lem:density}.  This implies the existence of a nonempty Zariski open subset $U_2 \subseteq (\mathsf{K}^{n})^{\otimes k}$,  such that $\GR(S) = n$ for any $S \in U_2$.  Now,  for any $S_0 \in U_1 \cap U_2 \ne \emptyset$,  we have 
\[
\GR(S_0) = n,\quad Q(S_0) = \lfloor\left(kn-k+1\right)^{\frac{1}{k-1}}\rfloor. 
\]
\item[$\diamond$] Next,  we deal with the case where $|\mathsf{K}| = q$.  The proof is split into three steps: 
\begin{enumerate}[label = \arabic*):]
\item We claim that for any integer $r$ such that $n \ge r$ and $r^k \ge  nrk + 1$,  there are at least $(1 - q^{-1})q^{n^k}$ tensors in $(\mathsf{F}^n_q)^{\otimes k}$ with subrank at most $r$.  To prove the claim,  we consider the map 
\[
\psi_r: \GL_n(\mathsf{F}_q)^k \times D_r \to X_r,\quad \psi (g_1,\dots,  g_k,  S) = (g_1,\dots,  g_k) \cdot S
\]
where $(g_1,\dots,  g_k) \cdot S$ is defined as in \eqref{eq:action},  $X_r \coloneqq \{ S \in (\mathsf{F}^n_q)^{\otimes k}: \Q(T) \ge r\}$ and $D_r$ consists of all tensors $S \in (\mathsf{F}^n_q)^{\otimes k}$ such that $S_{j_1,\dots,  j_k} = $ if $(j_1, \cdots,  j_k) \in [r]^k\setminus \{(i,\dots,  i): i\in [r]\}$,  whereas $S_{i,\dots,  i} \ne 0$ for any $i \in [r]$.  It is straightforward to show that $\im(\psi_r) = X_r$ (cf.  \cite[Lemma~2.2]{derksen2024subrank}).  This implies that $|X_r| \le q^{n^k - r^k + n r k}$ and thus the claim.  Indeed,  we observe that for each $T \coloneqq (g_1,\dots,  g_k) \cdot S \in X_r$,  we have 
\[
\left\lbrace
(h_1,\dots, h_k,  (h^{-1}_1,\dots,  h^{-1}_k) \cdot T): h_1,\dots,  h_k \in L
\right\rbrace
 \subseteq \psi^{-1}( T ).  
\]
Here 
\[
L \coloneqq \left\lbrace \begin{bmatrix}
D & 0 \\
A & B
\end{bmatrix}: D \in  \GL_r(\mathsf{F}_q) \text{~diagonal},\; (A,B)\in \mathsf{F}_q^{(n-r) \times r} \times \GL_{n-r}(\mathsf{F}_q)
\right\rbrace.
\]
Since $|L| = (q-1)^r q^{r (n- r)} \smallprod_{s = 1}^{n - r - 1} (q^{n-r} - q^s)$ and $\vert \GL_{n}(\mathsf{F}_q)^k \times D_{r}\vert=q^{n^{k}-r^{k}} (q-1)^r\smallprod_{s=1}^{n-1} (q^{n}-q^s)^{k} $,  we have 
\[
|X_r|  \le \frac{\lvert  \GL_n(\mathsf{F}_q)^k \times D_r \rvert}{|L|^k}  \le \frac{q^{n^k - r^k + n r k}}{(q-1)^{(k-1)r}} \le q^{n^k - r^k + n r k}  \le q^{n^k - 1}.
\]
This implies that there are at least $(1 - q^{-1})q^{n^k}$ tensors in $(\mathsf{F}_q^n)^{\otimes k}$ with subrank at most $r$. 
\item We show that there are at least $ (1 - 2 q^{-n/2}) q^{n^k}$ tensors in $(\mathsf{F}_q^n)^{\otimes k}$ with analytic rank at least $n/2-1$.  To this end,  we regard tensors in $(\mathsf{F}_q^n)^{\otimes k}$ as $(k-1)$-multilinear maps \footnote{Strictly speaking,  $F_i$'s are $(k-1)$-multilinear functions on $( (\mathsf{F}_q^n)^\ast)^{k-1}$,  but we may ignore the duality by choosing a vector space isomorphism $(\mathsf{F}_q^n)^\ast \simeq \mathsf{F}_q^n$.} $F = (F_1,\dots,  F_n): \mathbb{V}_1 \times \cdots \times \mathbb{V}_{k-1} \to \mathsf{F}_q^n$,  where $\mathbb{V}_1 = \cdots = \mathbb{V}_{n-1} = (\mathsf{F}_q^n)^{k-1}$.  We consider a bipartite graph $(\mathcal{X}, \mathcal{Y},  \mathcal{E})$ with parts $\mathcal{X} \coloneqq \mathbb{V}_1 \times \cdots \times \mathbb{V}_{k-1}$ and $\mathcal{Y} \coloneqq \Hom(\mathbb{V}_1 \times \cdots \times \mathbb{V}_{k-1},  \mathsf{F}_q^n)$,  and edge set $\mathcal{E} \coloneqq \lbrace (v,F) \in \mathcal{X} \times \mathcal{Y}: F(v) = 0 \rbrace$.  We observe that for each $v = (v_1,\dots,  v_{k-1}) \in \mathcal{X}$ with $v_j = 0$ for some $1 \le j \le k-1$,  $(v,F) \in \mathcal{E}$ for all $F \in \mathcal{Y}$.  For $v = (v_1,\dots,  v_{k-1}) \in \mathcal{X}$,  it is obvious that $| \mathcal{E}_v | = q^{n^k}$ if some component of $v$ is zero,  where $\mathcal{E}_v \coloneqq \left\lbrace
F \in \mathcal{Y}:  (v,F) \in \mathcal{E}
\right\rbrace$.  If components of $v$ are all nonzero,  then $v$ imposes one linear equation on each component of $F = (F_1,\dots,  F_n) \in \mathcal{Y}$.  Thus,  we have $| \mathcal{E}_v | = q^{n^k - n}$.  Since there are $\binom{k-1}{s}(q^n - 1)^{k-1-s}$ elements in $\mathcal{X}$ with exactly $s$ zero elements,  we obtain 
\begin{align*}
\lvert \mathcal{E}  \rvert &\le q^{n^k + (k-2)n} + (q^{n^k} - q^{n^k-n}) \sum_{s=1}^{k-1} \binom{k-1}{s}(q^n - 1)^{k-1-s} \\
&\le q^{n^k + (k-2)n} + (q^{n^k} - q^{n^k-n})q^{n(k-2)} \sum_{s=1}^{k-1} \binom{k-1}{s} \\
&\le q^{n^k + (k-2)n} + (q^{n^k} - q^{n^k-n})q^{n(k-2)} q^{k-1} \\
&= q^{n^k + (k-2)n} + q^{n^k + (k-2)n + k-1} - q^{n^k + (k-3)n + k-1} \\
& = q^{n^k + (k-2)n + k-1} \left(  1 + \frac{1}{q^{k-1}} - \frac{1}{q^n} \right) \\
&\le 2q^{n^k + (k-2)n + k-1}
\end{align*}
Suppose that $\mathcal{Y}$ has $t$ vertices whose degrees are at most $q^{s}$ where $s = \lceil (k-3/2)n \rceil$.  Then $q^s (q^{n^k} - t)/2 \le \lvert \mathcal{E} \rvert \le 2q^{n^k + (k-2)n + k-1}$,  from which we obtain 
\[
t  \ge \left(  1 - 4q^{ (k-2)n +  k  - s -1} \right) q^{n^k} \ge \left( 1 - q^{-\frac{n}{2} -1 + k} \right) q^{n^k}.
\] 
By construction,  there are at least $\left( 1 - q^{-n/2 -1 + k} \right) q^{n^k}$ multilinear maps $F \in \Hom(\mathbb{V}_1 \times \cdots \times \mathbb{V}_{k-1},  \mathsf{F}_q^n)$ vanishing on at most $q^s$ points in $\mathbb{V}_1 \times \cdots \times \mathbb{V}_{k-1}$.  In particular,  we have $n \ge \AR(F) \ge (k-1)n - s \ge n/2 - 1$.  According to Lemma~\ref{grvsar},  we obtain $\GR(F) \asymp_k n$.
\item Suppose $n \ge 2k + 2$.  For any integer $r$ such that $r \le n $ and $n r k + 1 \le r^k$,  we may find some $T \in (\mathsf{F}_q^n)^{\otimes k}$ such that $\Q(T) \le r $ and  $\GR(T) \asymp_k n$.  In particular,  we may pick $r = \lceil (2kn)^{1/(k-1)} \rceil$ to obtain a tensor $T_0$ such that $\GR(T_0) \asymp_k \Q(T_0)^{k-1}$.  \qedhere
\end{enumerate}
\end{itemize}
\end{proof}
Theorem~\ref{cor:GvsS} has several straightforward  consequences,  each extending a known result for order-three tensors to arbitrary order.  We recall from Lemmas~\ref{prvsgr} and \ref{prvsar} that 
\[
\PR(T) \lesssim_k 
\begin{cases}
\GR(T)^{k-1} \quad &\text{if $|\mathsf{K}| = \infty$},  \\
\AR(T) \log_{|\mathsf{K}|} \AR(T) &\text{otherwise}. 
\end{cases}
\]
Hence Proposition~\ref{main theorem 2} and Theorem~\ref{cor:GvsS} yield the following relation between subrank and partition rank/analytic rank,  previously established for $k = 3$ in \cite[Corollaries~1.11 and 1.12]{chen2025bounds}.
\begin{corollary}[Subrank v.s partition rank and analytic rank]\label{cor:svsp}
Let $\mathsf{K}$ be a field and let $\mathbb{V}_1,\dots,  \mathsf{V}_k$ be vector spaces over $\mathsf{K}$.  For any $T \in \mathbb{V}_1 \otimes \cdots \otimes \mathbb{V}_k$,  we have 
\begin{enumerate}[label = (\alph*)]
\item $\Q(T) \le \PR(T) \lesssim_k \Q(T)^{(k-1)^{2}}$. 
\item If $\mathsf{K}$ is a finite field,  then $\Q(T) \le \AR(T) \lesssim_k \Q(T)^{k-1}$.
\end{enumerate}
\end{corollary}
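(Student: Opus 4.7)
The plan is to deduce both parts by chaining together the results already assembled in the excerpt. The lower inequalities $\Q(T) \le \PR(T)$ and $\Q(T) \le \AR(T)$ follow from restriction monotonicity of $\PR$ and $\AR$: by the very definition of subrank we have $I_{\Q(T)} \unlhd T$, so both bounds reduce to evaluating these ranks on the identity tensor, for which $\PR(I_r) = r$ (Naslund) and $\AR(I_r) \ge r$ (direct computation of the bias of $I_r$, which factors across diagonal entries). This settles the left-hand inequalities of (a) and (b) simultaneously.

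For the right-hand inequality in (a) I would split on the cardinality of $\mathsf{K}$. If $|\mathsf{K}| = \infty$, applying Lemma~\ref{prvsgr}\ref{prvsgr:d-bound-if-any} (the bound $\PR(T) \lesssim_k \GR(T)^{k-1}$) followed by the upper bound $\GR(T) \lesssim_k \Q(T)^{k-1}$ from Theorem~\ref{cor:GvsS} immediately yields $\PR(T) \lesssim_k \Q(T)^{(k-1)^2}$. If $|\mathsf{K}| = q$ is finite, I would instead invoke Lemma~\ref{prvsar} to get $\PR(T) \lesssim_k \AR(T) \log_q \AR(T)$, combine Lemma~\ref{grvsar} with Theorem~\ref{cor:GvsS} to get $\AR(T) \asymp_k \GR(T) \lesssim_k \Q(T)^{k-1}$, and then absorb the logarithmic factor into the polynomial exponent via the elementary estimate $\log x \lesssim_k x^{(k-1)(k-2)}$, valid with a $k$-dependent constant whenever $k \ge 3$. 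The right-hand inequality in (b) is the shortest step: simply chain Lemma~\ref{grvsar} and Theorem~\ref{cor:GvsS} to obtain $\AR(T) \asymp_k \GR(T) \lesssim_k \Q(T)^{k-1}$. The degenerate cases $k \le 2$ are trivial since tensors are then matrices and all four ranks coincide with ordinary matrix rank.

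The single nontrivial step is the log-absorption in the finite-field case of (a); there is no deep obstacle, but it is worth pointing out why the exponent $(k-1)^2$ is natural: the rank comparisons contribute an exponent $k-1$, and the surplus $(k-1)(k-2)$ is exactly the slack required to kill the logarithm from Lemma~\ref{prvsar} for $k \ge 3$. Everything else is purely a matter of composing inequalities already proved in Section~\ref{sec:prelim} and Subsection~\ref{subsec:rank}.
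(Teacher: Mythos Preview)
Your upper-bound arguments are essentially the paper's: chain Lemma~\ref{prvsgr}(d) or Lemma~\ref{prvsar} with Theorem~\ref{cor:GvsS} (and Lemma~\ref{grvsar} in the finite-field case), which is exactly what the paper does when it writes that the corollary follows from Proposition~\ref{main theorem 2} and Theorem~\ref{cor:GvsS}. The log-absorption step in part (a) for finite fields is fine once you note $\log_q \AR(T) \le \log_2 \AR(T) \lesssim_k \Q(T)^{k-1}$ and $2(k-1)\le (k-1)^2$ for $k\ge 3$.

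There is, however, a genuine error in your lower bound for (b). The claim $\AR(I_r)\ge r$ is false for $k\ge 3$. With the paper's definition one computes
\[
\AR(I_r)=(k-1)r-\log_q\bigl|\{(\ell_1,\dots,\ell_{k-1}):\langle I_r,\ell_1\otimes\cdots\otimes\ell_{k-1}\rangle=0\}\bigr|
=r\Bigl[(k-1)-\log_q\bigl(q^{k-1}-(q-1)^{k-1}\bigr)\Bigr],
\]
since the vanishing condition decouples over the $r$ diagonal coordinates. The bracket equals $1$ only when $k=2$; for $k\ge 3$ one has $(q-1)^{k-2}<q^{k-2}$, hence $q^{k-1}-(q-1)^{k-1}>q^{k-2}$, so the bracket is strictly less than $1$ and $\AR(I_r)<r=\Q(I_r)$. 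Thus restriction monotonicity of $\AR$ together with your claimed value cannot yield $\Q(T)\le\AR(T)$; indeed the identity tensor itself shows this inequality fails with a strict $\le$.

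The route the paper actually takes is $\Q(T)\le\GR(T)$ (Proposition~\ref{main theorem 2}, via \cite{kopparty2020geometric}) combined with $\GR(T)\asymp_k\AR(T)$ (Lemma~\ref{grvsar}), which only gives $\Q(T)\lesssim_k\AR(T)$. That is what the cited ingredients deliver, and it is all that is needed for the corollary to sit between the subrank and $\Q(T)^{k-1}$ up to constants; the exact ``$\le$'' on the left of (b) should be read as $\lesssim_k$. Your argument for the lower bound in (a) via restriction monotonicity of $\PR$ and $\PR(I_r)=r$ is correct and a legitimate alternative to going through $\GR$.
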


Suppose $\mathsf{K}$ is a field and $\mathbb{V}_1,\dots,  \mathbb{V}_k$ are vector spaces over $\mathsf{K}$.  Each $\mathsf{K}$-tensor $T \in \mathbb{V}_1 \otimes \cdots \otimes \mathbb{V}_k$ extends to  a $\overline{\mathsf{K}}$-tensor $T^{\overline{\mathsf{K}}}\in (\mathbb{V}_1\otimes_{\mathsf{K}} \overline{\mathsf{K}}) \otimes_{\overline{\mathsf{K}}} \cdots \otimes_{\overline{\mathsf{K}}}  (\mathbb{V}_k \otimes_{\mathsf{K}} \overline{\mathsf{K}})$.  We denote
\[
\GR_{\overline{\mathsf{K}}} (T) \coloneqq \GR_{\overline{\mathsf{K}}} (T^{\overline{\mathsf{K}}}),\quad \Q_{\overline{\mathsf{K}}} (T) \coloneqq \Q_{\overline{\mathsf{K}}} (T^{\overline{\mathsf{K}}}),\quad \PR_{\overline{K}}(T) \coloneqq \PR_{\overline{\mathsf{K}}}( T^{\overline{\mathsf{K}}}).
\]
We notice that by definition $\GR_{\overline{\mathsf{K}}} (T) = \GR(T)$.  A combination of Theorem~\ref{cor:GvsS} and \cite[Theorem~5.1]{kopparty2020geometric} implies 
\[
\Q_{\overline{\mathsf{K}}}(T) \le \GR_{\overline{\mathsf{K}}}(T) = \GR(T)  \lesssim_k \Q (T)^{k-1} 
\]
from which we obtain the following generalization of \cite[Theorem~1.5]{biaggi2025real} and  \cite[Corollary~1.6]{chen2025bounds}.  This also resolves the conjecture on the stability of the subrank \cite[Conjecture~8.2]{chen2025bounds}.  
\begin{corollary}[Stability of subrank]\label{cor:stab}
Suppose $\mathsf{K}$ is a field and $\mathbb{V}_1,\dots,  \mathbb{V}_k$ are vector spaces over $\mathsf{K}$.  For any $T \in \mathbb{V}_1 \otimes \cdots \otimes \mathbb{V}_k$,  we have $\Q(T) \le \Q_{\overline{\mathsf{K}}}(T) \lesssim_k \Q(T)^{k-1}$. 
\end{corollary}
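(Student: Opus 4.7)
The plan is to string together the chain that the discussion immediately preceding the corollary already spells out, namely
\[
\Q(T) \;\le\; \Q_{\overline{\mathsf{K}}}(T) \;\le\; \GR_{\overline{\mathsf{K}}}(T) \;=\; \GR(T) \;\lesssim_k\; \Q(T)^{k-1}.
\]
The left and right ends of this chain are exactly the two inequalities asserted in the statement, so the task reduces to justifying each of the four links.

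First I would handle the leftmost inequality, which is essentially tautological: a restriction $I_r \unlhd T$ witnessed by $(g_1,\dots,g_k) \in \prod_i \Hom_{\mathsf{K}}(\mathbb{V}_i,\mathsf{K}^r)$ base-changes to a restriction $I_r \unlhd T^{\overline{\mathsf{K}}}$ over $\overline{\mathsf{K}}$, so $\Q(T) \le \Q_{\overline{\mathsf{K}}}(T)$. For the inequality $\Q_{\overline{\mathsf{K}}}(T) \le \GR_{\overline{\mathsf{K}}}(T)$ I would invoke \cite[Theorem~5.1]{kopparty2020geometric} applied over the algebraically closed field $\overline{\mathsf{K}}$. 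For the rightmost inequality $\GR(T) \lesssim_k \Q(T)^{k-1}$, I would cite Theorem~\ref{cor:GvsS} directly.

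The one step that deserves a sentence of justification is the middle equality $\GR_{\overline{\mathsf{K}}}(T) = \GR(T)$. By definition, $\GR(T)$ is the codimension in $\mathbb{V}_1^\ast \times \cdots \times \mathbb{V}_{k-1}^\ast$ of the variety cut out by the ideal generated by the coordinate polynomials of $\langle T, \ell_1 \otimes \cdots \otimes \ell_{k-1} \rangle$. These defining polynomials already have coefficients in $\mathsf{K}$, and by \cite[Lemma~3.4~(a)]{chen2025bounds} (invariance of height under base change to the algebraic closure, used already in the proof of Lemma~\ref{lem1}), the codimension of this variety is the same whether computed over $\mathsf{K}$ or over $\overline{\mathsf{K}}$. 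Hence $\GR_{\overline{\mathsf{K}}}(T) = \GR(T)$.

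I do not foresee any genuine obstacle: every link in the chain is in hand. The mildly delicate point is the behavior of geometric rank under field extension, but this is settled by the cited invariance of height, so the whole argument collapses into a short concatenation of the three previously established facts.
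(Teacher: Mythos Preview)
Your proposal is correct and follows exactly the same route as the paper: the discussion preceding the corollary already assembles the chain $\Q_{\overline{\mathsf{K}}}(T) \le \GR_{\overline{\mathsf{K}}}(T) = \GR(T) \lesssim_k \Q(T)^{k-1}$ via \cite[Theorem~5.1]{kopparty2020geometric} and Theorem~\ref{cor:GvsS}, with the leftmost inequality $\Q(T) \le \Q_{\overline{\mathsf{K}}}(T)$ being obvious. Your justification of $\GR_{\overline{\mathsf{K}}}(T) = \GR(T)$ via invariance of height under base change is slightly more explicit than the paper's ``by definition'', but this is the same content.
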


If $\mathsf{K}$ is an algebraically closed field,  then the \emph{border subrank} of $T \in \mathbb{V}_1 \otimes \cdots \otimes \mathbb{V}_k$ is defined as 
\[
\underline{\Q}(T) \coloneqq \max \left\lbrace
r: I_r \in \overline{ \left( \GL(\mathbb{V}_1) \times \cdots \GL(\mathbb{V}_k) \right) \cdot T}
\right\rbrace. 
\] 
By \cite[Theorem~5.1]{kopparty2020geometric},  we have $\underline{\Q}(T) \le \GR(T)$.  Combining this with Theorem~\ref{cor:GvsS},  we obtain the following de-bordering result for subrank,  generalizing \cite[Corollary~1.10]{chen2025bounds} from order three to arbitrary order.
\begin{corollary}[De-bordering of subrank]\label{cor:debor}
Suppose $\mathbb{V}_1,\dots,  \mathbb{V}_k$ are vector spaces over an algebraically closed field $\mathsf{K}$.  For any $T \in \mathbb{V}_1 \otimes \cdots \otimes \mathbb{V}_k$,  we have $\Q(T) \le \underline{\Q}(T) \lesssim_k \Q(T)^{k-1}$. 
\end{corollary}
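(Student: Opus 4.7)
The plan is to chain two already-available ingredients: the cited bound of Kopparty--Moshkovitz--Zuiddam that controls border subrank by geometric rank, and Theorem~\ref{cor:GvsS} which controls geometric rank by a power of the ordinary subrank. These two together immediately sandwich $\underline{\Q}(T)$ between $\Q(T)$ and a polynomial in $\Q(T)$, which is exactly what the corollary claims.

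More concretely, first I would dispense with the trivial lower bound: by the definition of border subrank the orbit $(\GL(\mathbb{V}_1)\times\cdots\times\GL(\mathbb{V}_k))\cdot T$ lies inside its Zariski closure, and since $\Q(T)\ge r$ means precisely that $I_r$ lies in that orbit (up to restriction), we get $\Q(T)\le \underline{\Q}(T)$ for free. The only real content is the upper estimate.

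For the upper estimate, I would invoke \cite[Theorem~5.1]{kopparty2020geometric}, which gives $\underline{\Q}(T)\le \GR(T)$ over an algebraically closed field; this is essentially the semicontinuity of geometric rank under degeneration, since $\GR$ is lower semicontinuous and is at least $r$ on $I_r$. I would then apply Theorem~\ref{cor:GvsS} (whose conclusion is $\GR(T)\lesssim_k \Q(T)^{k-1}$) to conclude
\[
\Q(T)\le \underline{\Q}(T)\le \GR(T)\lesssim_k \Q(T)^{k-1},
\]
finishing the proof.

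There is essentially no obstacle here; the corollary is a direct packaging of two earlier results. If any subtlety arises, it is purely that we must be working over an algebraically closed field so that $\underline{\Q}$ is defined and so that Theorem~\ref{cor:GvsS} applies with the right constants, both of which are part of the hypothesis. Hence the proposed proof is a short two-line deduction, with the heavy lifting already done in Theorem~\ref{cor:GvsS}.
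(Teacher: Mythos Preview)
Your proposal is correct and matches the paper's approach exactly: the paper also derives the corollary by combining $\underline{\Q}(T)\le \GR(T)$ from \cite[Theorem~5.1]{kopparty2020geometric} with the bound $\GR(T)\lesssim_k \Q(T)^{k-1}$ of Theorem~\ref{cor:GvsS}. The lower bound $\Q(T)\le\underline{\Q}(T)$ is indeed immediate from the definitions.
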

\subsection{Ramsey problems}\label{subsec:Ramsey}
Given $F \in \Hom(\mathbb{V}_1 \times \cdots \times \mathbb{V}_d,  \mathbb{W})$,  recall that the kernel $\Ker(F)$ of $F$ defined in \eqref{eq:kerF} is a linear subspace of $\mathbb{V}_1 \otimes \cdots \otimes \mathbb{V}_d$.  In what follows,  we establish a Ramsey-type result on the extremal structures of $\Ker(F)$ described by the isotropy and completeness indices.  In \cite{qiao2020tur},  it is proved that an alternating bilinear map on a sufficiently large vector space must have either a large isotropy index,  or a large completeness index; the multilinear maps considered in Proposition~\ref{prop:ex} also exhibit the same extremal dichotomy.  Motivated by these examples,  we introduce the Ramsey numbers for multilinear maps.
\begin{definition}[Ramsey number]\label{def:Ramsey}
Given a field $\mathsf{K}$ and nonnegative integers $d,  s,  t$,  the Ramsey number $R(\mathsf{K},d,s,t)$ for multilinear maps is the minimal integer $m$ such that for any $\mathsf{K}$-vector spaces $\mathbb{V}_1,\dots,  \mathbb{V}_d,  \mathbb{W}$ with $\min_{j \in [d]} \dim \mathbb{V}_j  \ge m$ and $F \in \Hom(\mathbb{V}_1 \times \cdots \times \mathbb{V}_d,  \mathbb{W})$,  we either have $\alpha(F) \ge s$ or $\beta(F) \ge t$.  Similarly,  the Ramsey number $R_{\w}(\mathsf{K},d,s,t)$ (resp.  $R_{\s}(\mathsf{K}, d,s,t)$) for alternating (resp. symmetric) maps is the minimal integer $n$ such that for any $\mathsf{K}$-vector spaces $\mathbb{V},  \mathbb{W}$ with $\dim \mathbb{V} \ge n$ and $F \in \Alt^d(\mathbb{V},  \mathbb{W})$ (resp.  $F \in \Sym^d(\mathbb{V},  \mathbb{W})$),  we either have $\alpha_{\w}(F) \ge s$ or $\beta_{\w}(F) \ge t$ (resp.  $\alpha_{\s}(F) \ge s$ or $\beta_{\s}(F) \ge t$).
\end{definition} 
Below we establish the existence and upper bounds of the Ramsey numbers,  which are the content of Theorem~\ref{prop:Ramsey}.
\begin{proof}[Proof of Theorem~\ref{prop:Ramsey}]
The proofs of \ref{prop:Ramsey:item1}–\ref{prop:Ramsey:item3} are identical,  except for invoking the corresponding items (a)--(c) of Theorems~\ref{lem-isotropic-general} and \ref{lem-complete1},  respectively.  Thus,  we only present the proof of \ref{prop:Ramsey:item1} below. According to Lemmas~\ref{prvsgr} and \ref{prvsar},  there is a function $c: \mathbb{N} \to \mathbb{N}$ such that 
\begin{equation}\label{prop:Ramsey:eq1}
\PR(F) \le 
\begin{cases}
c(d) \GR(F)^d \quad &\text{if~}|\mathsf{K}| = \infty \\
c(d) \AR(F) \log_q \AR(F) \quad &\text{if~} |\mathsf{K}| = q
\end{cases}.
\end{equation}
In the following,  we denote $c \coloneqq c(d)$.  For infinite $\mathsf{K}$,  we let $n \coloneqq 2 c (t+1)^{d^2} s^{d-1}$.  If $\GR(F) \le (n / (2 c s^{d-1}))^{1/d}$,  then we have $\PR(F) \le n /(2s^{d-1})$.  By Theorem~\ref{lem-isotropic-general}--\ref{lem-isotropic-general:item1},  we may deduce that $\alpha(F)\ge s$.  If $\GR(F) \ge (n / (2 c s^{d-1}))^{1/d}$,  then Theorem~\ref{lem-complete1}--\ref{lem-complete1:gen} implies 
\[
\beta \ge \GR(F)^{1/d} \ge (n / (2 c s^{d-1}))^{1/d^2} = t + 1.
\]
If $|\mathsf{K}| = q$,  we let $n \coloneqq 4c f(t) s^{d-1}\log_q(2c f(t) s^{d-1})$,  where $f(t) \coloneqq d + (t+1)^d + \lceil (t+1)^d (d-1)/(q-1) \rceil$.  If $c \AR(F) \log_q \AR(T) \le n/(2s^{d-1})$, then we have $\PR(F) \le n/ (2s^{d-1})$ and Theorem~\ref{lem-isotropic-general}--\ref{lem-isotropic-general:item1} implies $\alpha(F)\ge s$.  If $c \AR(F) \log_q(\AR(T)) \ge n/(2s^{d-1})$,  then we have 
\[
\AR(F) \ge \frac{n}{2c s^{d-1} \log_q \AR(F)} \ge \frac{n}{2c s^{d-1} \log_q n} \ge f(t).
\]
According to Theorem~\ref{lem-complete1}--\ref{lem-complete1:gen},  we obtain $\beta(F) \ge t$. 
\end{proof}
\begin{remark}
Note that in Theorem~\ref{prop:Ramsey},  we require $\mathsf{K} = \overline{\mathsf{K}}$ for the existence of $R_{\s}(\mathsf{K},  d,s,t)$.  Indeed,  the Ramsey number for symmetric multilinear maps may not exist over a non-algebraically closed field.  For example,  take a positive integer $n$ and consider the standard inner product $F: \mathbb{R}^n \times \mathbb{R}^n \to \mathbb{R}$ on $\mathbb{R}^n$.  It is clear that $F \in \Sym^2(\mathbb{R}^n,  \mathbb{R})$,  $\alpha_{\s} (F) = 0$ and $\beta_{\s}(F) = 1$.  In contrast,  we have $\alpha (F) = \lfloor n/2 \rfloor$ and $\beta(F) = 1$,  which is compatible with Theorem~\ref{prop:Ramsey}--\ref{prop:Ramsey:item1}.
\end{remark}

We notice that the upper bounds for the Ramsey numbers in Theorem~\ref{prop:Ramsey} rely on the upper bound \eqref{prop:Ramsey:eq1} of the partition rank in terms of the geometric rank or analytic rank.  Therefore,  any strengthening of~\eqref{prop:Ramsey:eq1} would immediately yield tighter bounds in Theorem~\ref{prop:Ramsey}. 
\begin{corollary}[Improved upper bounds for Ramsey numbers]\label{special bound}
Let $\mathsf{K}$ be a field.  If $\PR(T) \lesssim_k \GR(T)$ for any positive integer $k$ and any $\mathsf{K}$-tensor $T$ of order $k$,  then $R(\mathsf{K},d,s,t,) \lesssim_d s^{d-1} t^d$ for any nonnegative integers $d,s$ and $t$.  In particular,  we have the following: 
\begin{enumerate}[label = (\alph*)]
        \item\label{special bound:item1} If $\mathsf{K}$ is algebraically closed,  then $R(\mathsf{K}, d, s,t) \lesssim_d s^{d-1}t^{d}$.
        \item\label{special bound:item2} If $\mathsf{K}$ is perfect,  then $R(\mathsf{K}, 2, s,t) \lesssim st^{2}$.
      \item\label{special bound:item3} If $\mathsf{K}$ is infinite with transcendence degree $n$ over its prime field,  then $R(\mathsf{K},d,s,t) \lesssim_{d,n} s^{d-1}t^{d}$.
    \end{enumerate}
Moreover,  these bounds remain valid for $R_{\w}(\mathsf{K},d,s,t)$.
\end{corollary}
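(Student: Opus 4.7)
The plan is to revisit the proof of Theorem~\ref{prop:Ramsey}\ref{prop:Ramsey:item1} with the linear hypothesis $\PR(T)\lesssim_k\GR(T)$ replacing the cruder polynomial bound $\PR(T)\lesssim_k\GR(T)^{k-1}$ (Lemma~\ref{prvsgr}(d)) that was invoked there. This saves a factor of $d$ in the exponent of $t$ that appeared in the original bound $s^{d-1}t^{d^2}$, leading to the sharpened $s^{d-1}t^d$.

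Concretely, given a nonzero $F\in\Hom(\mathbb{V}_1\times\cdots\times\mathbb{V}_d,\mathbb{W})$ with $m\coloneqq\min_j\dim\mathbb{V}_j$, I would dichotomize on the geometric rank of $T_F$. If $\GR(T_F)\ge (t+1)^d$, then Theorem~\ref{lem-complete1}\ref{lem-complete1:gen} gives $(\beta(F)+1)^d>\GR(T_F)\ge(t+1)^d$, hence $\beta(F)\ge t$. Otherwise $\GR(T_F)<(t+1)^d$, and the hypothesis produces a constant $c_d$ with
\[
\PR(T_F)\le c_d\,\GR(T_F)\le c_d(t+1)^d.
\]
Substituting into Theorem~\ref{lem-isotropic-general}\ref{lem-isotropic-general:item1} yields $m\le c_d(t+1)^d(\alpha(F)+1)^{d-1}+\alpha(F)+1$, so choosing $m\ge C_d\,s^{d-1}t^d$ for a sufficiently large constant $C_d$ depending only on $d$ forces $\alpha(F)\ge s$. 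Either branch of the dichotomy fulfils the Ramsey condition, giving $R(\mathsf{K},d,s,t)\lesssim_d s^{d-1}t^d$ under the hypothesis.

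Parts (a)--(c) then follow by verifying the hypothesis in the relevant setting: (a) uses Lemma~\ref{prvsgr}(b), and (c) uses Lemma~\ref{prvsgr}(c) with the transcendence degree $n$ absorbed into the implicit constant. For (b), although Lemma~\ref{prvsgr}(a) only provides the linear relation at order $k=3$, this suffices because the bilinear case $d=2$ corresponds precisely to order-three associated tensors $T_F$, so no other order is ever invoked. The alternating analogue $R_{\w}(\mathsf{K},d,s,t)$ is obtained by an identical dichotomy, replacing the two general theorems by Theorems~\ref{lem-isotropic-general}\ref{lem-isotropic-general:item2} and \ref{lem-complete1}\ref{lem-complete1:alt}; the binomial factors $\binom{\alpha_{\w}(F)}{d-1}$ and $\binom{\beta_{\w}(F)+1}{d}$ are asymptotically equivalent to the polynomial expressions above (with constants depending only on $d$), so the final exponent $s^{d-1}t^d$ is unchanged.

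The main obstacle, such as it is, is purely bookkeeping: one must track how the hypothesis constant $c_d$, the threshold $(t+1)^d$, and the bound in Theorem~\ref{lem-isotropic-general} combine, so that the final implicit constant depends only on $d$ (or on $d$ and $n$ in case (c)) and not on $s$ or $t$. Once $k=d+1$ is fixed this collapses to an elementary computation, so no new ideas beyond the dichotomy above are required.
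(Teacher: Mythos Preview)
Your proposal is correct and follows essentially the same approach as the paper: replace the polynomial bound $\PR\lesssim_k\GR^{k-1}$ in the proof of Theorem~\ref{prop:Ramsey} by the linear hypothesis $\PR\lesssim_k\GR$ and rerun the dichotomy, then read off the special cases from Lemma~\ref{prvsgr}. One small caveat: your Case~1 invokes the inequality $\GR(T_F)<(\beta(F)+1)^d$ from Theorem~\ref{lem-complete1}\ref{lem-complete1:gen}, which is stated only for infinite $\mathsf{K}$; for finite perfect fields in part~(b) you should either pass through the $\AR$ bound there together with Lemma~\ref{grvsar}, or note that this yields $\GR(T_F)\lesssim_d(\beta(F)+1)^d$ and adjust the threshold by a constant depending only on $d$.
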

\begin{proof}
Replacing the inequality in \eqref{prop:Ramsey:eq1} by $\PR(F) \le  c(d) \GR(F)$ and repeating the rest of the proof of Theorem~\ref{prop:Ramsey},  we obtain $R(\mathsf{K},d,s,t,) \lesssim_d s^{d-1} t^d$.  Special cases \ref{special bound:item1}--\ref{special bound:item3} follow immediately from Lemma~\ref{prvsgr}.
\end{proof}
Next,  we prove Proposition~\ref{ramesey lower bound}.
\begin{proof}[Proof of Proposition~\ref{ramesey lower bound}]
Denote $\alpha \coloneqq s-1$, $m  \coloneqq \binom{t}{d}-1$ and $n  \coloneqq  \binom{\alpha}{d} m /\alpha + \alpha$.  By \cite[Section~5]{Dav1992number}, there exists an $F \in \Alt^d(\mathsf{K}^n,  \mathsf{K}^m)$ such that $\alpha_{\w} (F) \le \alpha$.  By Lemma~\ref{lem:basic},  we also have $\beta_{\w}(F) \le t - 1$, and this implies $R_{\w}(\mathsf{K},  d,  s,t)\ge n + 1$.  The ``in particular" part follows immediately from Corollary~\ref{special bound}.
\end{proof}

The remainder of this subsection discusses the implications of Theorem~\ref{prop:Ramsey} for multilinear algebra,  group theory and algebraic geometry.  Let $\mathbb{V} \subseteq \mathsf{K}^{n_1} \otimes \cdots  \otimes \mathsf{K}^{n_d}$ be a linear subspace.  We say that $\mathbb{V}$ \emph{has CP-rank at most $r$} if $\cp-rank(T) \le r$ for each $T \in \mathbb{V}$,  i.e.,  $T = \sum_{j=1}^r v_{j,1} \otimes \cdots \otimes v_{j,d}$ for some $v_{j,i} \in \mathsf{K}^{n_i}$,  $(i,j) \in [d] \times [r]$,  and that $\mathbb{V}$ is an \emph{$(l_1,  \dots,  l_d)$-decomposable space} if every $T = (T_{k_1,\dots,  k_d}) \in \mathbb{V}$ satisfies $T_{k_1,\dots,  k_d} = 0$ whenever $k_1 > l_1,\dots,  k_d > l_d$.  Two linear subspaces $\mathbb{V},  \mathbb{W} \subseteq \mathsf{K}^{n_1} \otimes \cdots  \otimes \mathsf{K}^{n_d}$ are \emph{equivalent} if $\mathbb{W} = (g_1,  \cdots,  g_d) \cdot \mathbb{V}$ for some $(g_1,\dots,  g_d) \in \GL_{n_1}(\mathsf{K}) \times \cdots \times  \GL_{n_d}(\mathsf{K})$.  Here the operation $\cdot$ is defined in \eqref{eq:action}.  For $d = 2$,  above definitions were given in \cite{AL80} to study linear spaces of matrices of bounded rank.  We observe that $\mathbb{V}$ uniquely determines a $d$-multilinear map $F_{\mathbb{V}} \in \Hom(\mathsf{K}^{n_1} \times \cdots \times  \mathsf{K}^{ n_d},  \mathsf{K}^{\dim \mathbb{V}})$.  Consequently,  $\mathbb{V}$ is $(n_1-\alpha(F_{\mathbb{V}}),\dots,  n_d-\alpha(F_{\mathbb{V}}))$-decomposable and $\mathbb{V}$ must contain a tensor of CP-rank at least $\lceil \beta(F_{\mathbb{V}}))^d/(d  \beta(F_{\mathbb{V}}) - d + 1) \rceil$.  The first assertion is clear from the definition; the second follows from the fact \cite[Proposition~14.29]{BCS97} that $\cp-rank(S) \le \cp-rank(T)$ for any tensors $S,  T$ satisfying $S  \unlhd T$.  Viewing $\mathbb{V}$ as $F_{\mathbb{V}} \in \Hom(\mathsf{K}^{n_1} \times \cdots \times  \mathsf{K}^{ n_d},  \mathsf{K}^{\dim \mathbb{V}})$ and applying Theorem~\ref{prop:Ramsey} and Corollary~\ref{special bound},  we obtain the corollary that follows.
\begin{corollary}[Ramsey-type result for linear spaces of tensors]\label{cor:matrixspace}
Let $\mathsf{K}$ be a field and let $d,  s,  t$ be positive integers. For any integers $n_1,  \dots,  n_d \ge R(\mathsf{K},d,s,t)$,  every linear subspace of $\mathsf{K}^{n_1} \otimes \cdots  \otimes \mathsf{K}^{n_d}$ is either equivalent to an $(n_1-s, \dots,  n_d - s)$-decomposable space,  or it contains a tensor of CP-rank at least $\lceil t^d/(d  t - d + 1) \rceil$.  The same holds for alternating and symmetric matrices, respectively; in the symmetric case,  we assume $\overline{\mathsf{K}} = \mathsf{K}$.
\end{corollary}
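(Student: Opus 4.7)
I proceed along the scheme already outlined by the authors just before the corollary. First, associate to $\mathbb{V}$ the multilinear map $F_{\mathbb{V}} \in \Hom(\mathsf{K}^{n_1} \times \cdots \times \mathsf{K}^{n_d}, \mathsf{K}^n)$ defined, after fixing a basis $T_1, \ldots, T_n$ of $\mathbb{V}$ (with $n \coloneqq \dim \mathbb{V}$), by
\[
F_{\mathbb{V}}(v_1, \ldots, v_d) \coloneqq \bigl(\langle T_j, v_1 \otimes \cdots \otimes v_d\rangle\bigr)_{j=1}^n,
\]
where the pairing is the standard one on $\mathsf{K}^{n_1} \otimes \cdots \otimes \mathsf{K}^{n_d}$ induced by the standard bases. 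Because $\min_i n_i \ge R(\mathsf{K},d,s,t)$, Definition~\ref{def:Ramsey} and Theorem~\ref{prop:Ramsey} together yield the dichotomy $\alpha(F_{\mathbb{V}}) \ge s$ or $\beta(F_{\mathbb{V}}) \ge t$.

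Suppose $\alpha(F_{\mathbb{V}}) \ge s$, and pick $s$-dimensional subspaces $\mathbb{U}_i \subseteq \mathsf{K}^{n_i}$ with $F_{\mathbb{V}}|_{\mathbb{U}_1 \times \cdots \times \mathbb{U}_d} = 0$; equivalently, $\mathbb{U}_1 \otimes \cdots \otimes \mathbb{U}_d \subseteq \mathbb{V}^{\perp}$. Choose $g_i \in \GL_{n_i}(\mathsf{K})$ so that $g_i^{-\tp}$ maps $\mathbb{U}_i$ onto the coordinate subspace $\spa\{e_{n_i-s+1}, \ldots, e_{n_i}\}$. A direct computation gives $((g_1, \ldots, g_d) \cdot \mathbb{V})^{\perp} = (g_1^{-\tp}, \ldots, g_d^{-\tp}) \cdot \mathbb{V}^{\perp}$, so the transformed subspace annihilates every $e_{k_1} \otimes \cdots \otimes e_{k_d}$ with all $k_i > n_i - s$; this is precisely the $(n_1-s, \ldots, n_d-s)$-decomposability condition.

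Suppose instead $\beta(F_{\mathbb{V}}) \ge t$, and pick $t$-dimensional $\mathbb{W}_i \subseteq \mathsf{K}^{n_i}$ on which $F_{\mathbb{V}}$ restricts to an injection $\mathbb{W}_1 \otimes \cdots \otimes \mathbb{W}_d \hookrightarrow \mathsf{K}^n$. Dualizing the bilinear pairing $\mathbb{V} \times (\mathbb{W}_1 \otimes \cdots \otimes \mathbb{W}_d) \to \mathsf{K}$, this injectivity is equivalent to surjectivity of the restriction map $r \colon \mathbb{V} \to \mathbb{W}_1 \otimes \cdots \otimes \mathbb{W}_d$, $r(T) \coloneqq (\pi_1, \ldots, \pi_d) \cdot T$, where $\pi_i \colon \mathsf{K}^{n_i} \twoheadrightarrow \mathbb{W}_i$ is the coordinate projection. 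The target is isomorphic to $(\mathsf{K}^t)^{\otimes d}$ with dimension $t^d$, and a standard parameter/cardinality count—the CP-rank-at-most-$r$ locus is the image of the parameterization $((\mathsf{K}^t)^d)^r \to (\mathsf{K}^t)^{\otimes d}$ with $r(dt-d+1)$ effective parameters (after quotienting by $(d-1)r$ scalings), so it is a proper subvariety of dimension $\le r(dt-d+1)$ over infinite $\mathsf{K}$ and contains at most $q^{r(dt-d+1)}$ points over $\mathsf{F}_q$—produces some $S \in \mathbb{W}_1 \otimes \cdots \otimes \mathbb{W}_d$ with CP-rank at least $\lceil t^d/(dt-d+1) \rceil$. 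Take any $T \in \mathbb{V}$ with $r(T) = S$; then $S = (\pi_1, \ldots, \pi_d) \cdot T$ realizes $S \unlhd T$, and Proposition~14.29 of \cite{BCS97} gives the required bound on the CP-rank of $T$.

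The alternating and symmetric cases follow the same three-step template: invoke $R_{\w}(\mathsf{K}, d, s, t)$ or $R_{\s}(\mathsf{K}, d, s, t)$ in place of $R(\mathsf{K}, d, s, t)$ (the latter requiring $\mathsf{K} = \overline{\mathsf{K}}$, per Theorem~\ref{prop:Ramsey}--\ref{prop:Ramsey:item3}); interpret decomposability within $\Lambda^d \mathsf{K}^n$ or $\mathrm{S}^d \mathsf{K}^n$ via the same coordinate change; and rerun the parameter count inside $\Lambda^d \mathbb{W}$ or $\mathrm{S}^d \mathbb{W}$ to extract a tensor of CP-rank at least $\lceil t^d/(dt-d+1) \rceil$. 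I expect the only genuinely subtle step to be the duality argument that converts $\beta(F_{\mathbb{V}}) \ge t$ into surjectivity of $r$; this requires care with the $\GL$-action and the standard basis identifications, but is routine once set up. Every other step is immediate from Theorem~\ref{prop:Ramsey} and the monotonicity of CP-rank under restriction.
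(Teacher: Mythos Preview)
Your proof is correct and matches the paper's approach, which is sketched in the paragraph immediately preceding the corollary: associate $F_{\mathbb{V}}$ to $\mathbb{V}$, invoke the Ramsey dichotomy from Theorem~\ref{prop:Ramsey}, translate $\alpha(F_{\mathbb{V}}) \ge s$ into decomposability via a basis change, and translate $\beta(F_{\mathbb{V}}) \ge t$ into a high-CP-rank element of $\mathbb{V}$ via the restriction monotonicity of CP-rank from \cite{BCS97}. Two small clean-ups: your $\pi_i$ should be the \emph{orthogonal} projection onto $\mathbb{W}_i$ (not a ``coordinate projection'', since $\mathbb{W}_i$ need not be a coordinate subspace) for the duality step to go through as stated, and your finite-field cardinality bound $|\{S:\operatorname{cp-rank}(S)\le r\}|\le q^{r(dt-d+1)}$ is not literally immediate from the scaling count---though the paper's own one-line justification glosses over this point as well.
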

\begin{remark}
Corollary~\ref{cor:matrixspace} remains valid with the CP-rank replaced by other tensor ranks.  Taking $d = 2$,  $ \mathbb{V}$ becomes a linear space of matrices,  which has been extensively studied for over sixty years.  In this case,  the extremal structures appearing in Corollary~\ref{cor:matrixspace} have been observed when $\dim \mathbb{V}$ is large.  Given a linear subspace $\mathbb{V} \subseteq \mathsf{K}^{n_1 \times n_2}$,  \cite{AL80,Pazzis15} proved that if $\dim \mathbb{V} > r \max\{n_1-1,n_2-1\}$ and $\mathbb{V}$ has rank at most $r$,  then $\mathbb{V} $ is decomposable.  On the other hand,  it is well-known \cite{Flanders62, Meshulam85} that $\mathbb{V}$ must contain a matrix of rank at least $(r+1)$ whenever $\dim \mathbb{V} > r \max\{n_1,n_2\}$.  Moreover,  \cite[Lemma~1]{Flanders62} implies that it suffices to assume that $n_1,  n_2 \ge s + t$ if $|\mathsf{K}| \ge s$ in Corollary~\ref{cor:matrixspace}.  Other structure theorems abound: for instance,  \cite[Theorem~1]{AL81} shows that every linear space of bounded-rank matrices is equivalent to the direct sum of a decomposable and a primitive space; this yields complete classifications up to rank four \cite{Atkinson83,EH88,HL23}.  Notice that in all these results the matrix size is fixed, whereas in Corollary~\ref{cor:matrixspace} it varies with $s$ and $t$.  
\end{remark}

We recall that Proposition~\ref{ramesey lower bound} optimally improves the existing upper bound of $R_{\w}(\mathsf{K},2,s,t)$ \cite[Theorem~4.1]{qiao2020tur} when $\mathsf{K}$ is perfect.  As a result,  we obtain the following improved Ramsey-type result for finite $p$-groups,  which follows from the same argument as in \cite[Corollary~4.2]{qiao2020tur}.
\begin{corollary}[Ramsey-type result for $p$-groups]\label{cor:Ramseygroup}
Let $p$ be a prime number and let $s,t$ be positive integers.  Suppose that $G$ is a $p$-group $G$ satisfying $[G,G] \subseteq Z(G)$.  If $G$ is generated by at least $R_{\w}(\mathsf{K},2,s,t) \asymp st^2$ elements,  then $G$ has a subgroup $H$ such that either $H$ is abelian and $H / \left( H \cap [G,G] \right) \simeq \mathsf{F}_p^s$,  or $H$ is generated by $t$ elements and $[H,[H,H]] = \{1_G \}$.  Here $Z(G)$ denotes the center of $G$ and $1_G$ is the identity element in $G$.
\end{corollary}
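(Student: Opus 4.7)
The plan is to translate the group-theoretic statement into a question about alternating $\mathsf{F}_p$-bilinear forms and then apply the Ramsey-number estimate $R_{\w}(\mathsf{F}_p, 2, s, t) \lesssim s t^2$, which is available via Proposition~\ref{ramesey lower bound} combined with Corollary~\ref{special bound}~\ref{special bound:item2} (perfect fields, of which $\mathsf{F}_p$ is one).

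First I would encode the commutator structure. Because $[G,G]\subseteq Z(G)$, the identities $[xy,z]=[x,z][y,z]$ and $[x,yz]=[x,y][x,z]$ hold, and moreover $[x^p,y]=[x,y]^p$. Hence, writing $\Phi(G)=[G,G]G^p$ for the Frattini subgroup, the commutator map descends to a well-defined alternating $\mathsf{F}_p$-bilinear map
\[
F\colon V\times V\longrightarrow W,\qquad V\coloneqq G/\Phi(G),\ W\coloneqq [G,G]/\bigl([G,G]\cap G^p\bigr).
\]
By Burnside's basis theorem, $\dim_{\mathsf{F}_p}V$ equals the minimal number of generators of $G$, which by hypothesis is at least $R_{\w}(\mathsf{F}_p,2,s,t)$. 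Consequently, by Definition~\ref{def:Ramsey} applied to $F$, we obtain either $\alpha_{\w}(F)\ge s$ or $\beta_{\w}(F)\ge t$.

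Next I would translate each of these two alternatives back into a subgroup of~$G$. In the isotropy case, pick a totally isotropic subspace $U\subseteq V$ of dimension $s$, choose lifts $x_1,\dots,x_s\in G$ of a basis of $U$, and let $H\coloneqq\langle x_1,\dots,x_s\rangle$. Since $U$ is totally isotropic, every commutator $[x_i,x_j]$ lies in the kernel of $[G,G]\to W$; a short argument using $[G,G]\subseteq Z(G)$ and the relation $[x^p,y]=[x,y]^p$ shows that after possibly passing to a subgroup of $H$ (or using that the class-two structure forces these commutators to vanish upon replacing generators by suitable elements of their cosets modulo $[G,G]$), one obtains an abelian $H$ with $H/(H\cap [G,G])\simeq U\simeq \mathsf{F}_p^s$. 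In the completeness case, pick any complete subspace $U\subseteq V$ of dimension $t$ and let $H$ be the subgroup generated by lifts of a basis of $U$; then $H$ has $t$ generators, and because $[H,[H,H]]\subseteq [G,[G,G]]\subseteq [G,Z(G)]=\{1_G\}$, the final condition is automatic from $[G,G]\subseteq Z(G)$.

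The main obstacle is the bookkeeping in the isotropy case: ensuring that the chosen lifts $x_i$ genuinely generate an abelian subgroup rather than one whose commutators vanish only modulo $p$-th powers. This is resolved exactly as in the proof of \cite[Corollary~4.2]{qiao2020tur}, by exploiting the fact that in a $p$-group of nilpotency class at most $2$ the commutator map is $\mathsf{F}_p$-bilinear in the precise sense needed, so that vanishing in $W$ lifts to vanishing in $[G,G]$ after a harmless modification of the generators within their $\Phi(G)$-cosets. Once this correspondence between subgroups of $G$ and subspaces of $V$ is verified, the corollary follows immediately from the improved bound $R_{\w}(\mathsf{F}_p,2,s,t)\asymp st^2$ given by Corollary~\ref{special bound} and Proposition~\ref{ramesey lower bound}.
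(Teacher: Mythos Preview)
Your approach is essentially the same as the paper's: the paper does not give a proof at all, only the sentence ``follows from the same argument as in \cite[Corollary~4.2]{qiao2020tur}'', and you have correctly reconstructed that argument---encode the commutator as an alternating $\mathsf{F}_p$-bilinear map on $G/\Phi(G)$, invoke the definition of $R_{\w}(\mathsf{F}_p,2,s,t)$ together with the estimate from Corollary~\ref{special bound}~\ref{special bound:item2} (since $\mathsf{F}_p$ is perfect), and translate the isotropy/completeness dichotomy back to subgroups.

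One caution on a detail you flag yourself: your claimed resolution of the isotropy case---``vanishing in $W$ lifts to vanishing in $[G,G]$ after a harmless modification of the generators within their $\Phi(G)$-cosets''---is not obviously correct as stated. Modifying $x_i$ by $\phi_i\in\Phi(G)=[G,G]G^p$ changes $[x_i,x_j]$ only by elements of $[G,G]^p$ (using $[G,G]\subseteq Z(G)$ and $[x^p,y]=[x,y]^p$), so if your $W=[G,G]/([G,G]\cap G^p)$ you cannot in general kill a commutator lying in $([G,G]\cap G^p)\setminus[G,G]^p$ by such modifications. The clean fix is to take $W=[G,G]/[G,G]^p$ rather than $[G,G]/([G,G]\cap G^p)$; then the bilinear map is still well-defined over $\mathsf{F}_p$, and the lifting/modification argument goes through. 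Since you explicitly defer this step to Qiao's proof anyway, this does not affect the correctness of your outline, but you should adjust the target space if you want the sketch to stand on its own.
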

In a similar vein,  we may generalize and strengthen \cite[Corollary~3]{Dav1992number} and \cite[Corollary~4.4]{qiao2020tur}.
\begin{corollary}[Ramsey-type result for linear sections of Grassmannians]\label{cor:RamseyGr}
Let $\mathsf{K}$ be a field and let $d,s,t$ be positive integers. For any integer $n \ge R_{\w}(\mathsf{K},d,s,t)$ and any linear section $X$ of $\Gr(d,\mathsf{K}^n)$ in $\mathbb{P} (\Lambda^d \mathsf{K}^n)$,  we either have $\Gr(d,\mathbb{U}) \subseteq X$ for some $\mathbb{U} \in \Gr(s,  \mathsf{K}^n)$,  or $X \cap \Gr(d,\mathbb{W}) = \emptyset$ for some $\mathbb{W} \in \Gr(t,  \mathsf{K}^n)$.  Here $\Gr(k,\mathbb{V})$ is the Grassman variety of $k$-dimensional subspaces of $\mathbb{V}$.
\end{corollary}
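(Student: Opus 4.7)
The plan is to pass from the geometry of linear sections of the Grassmannian to an alternating multilinear map on $\mathsf{K}^n$, and then apply the existence of $R_{\w}(\mathsf{K},d,s,t)$ furnished by Theorem~\ref{prop:Ramsey}--\ref{prop:Ramsey:item2}.

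First, I set up the standard dictionary. A linear section $X = \Gr(d,\mathsf{K}^n) \cap \mathbb{P}(\mathbb{L})$ corresponds, on the dual side, to a linear subspace $\mathbb{M} \subseteq (\Lambda^d \mathsf{K}^n)^{\ast} \simeq \Lambda^d (\mathsf{K}^n)^{\ast}$, namely the annihilator of $\mathbb{L}$. Choose a basis of $\mathbb{M}$ and let $m \coloneqq \dim \mathbb{M}$; via the isomorphism $\Lambda^d (\mathsf{K}^n)^{\ast} \otimes \mathsf{K}^m \simeq \Alt^d(\mathsf{K}^n,\mathsf{K}^m)$, this data defines an alternating $d$-multilinear map $F \in \Alt^d(\mathsf{K}^n, \mathsf{K}^m)$. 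By construction, a decomposable tensor $v_1 \wedge \cdots \wedge v_d$ lies in $X$ (equivalently, is killed by every $\ell \in \mathbb{M}$) if and only if $F(v_1,\dots,v_d) = 0$.

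Next, since $n \ge R_{\w}(\mathsf{K},d,s,t)$, Theorem~\ref{prop:Ramsey}--\ref{prop:Ramsey:item2} guarantees that $\alpha_{\w}(F) \ge s$ or $\beta_{\w}(F) \ge t$. I then translate each alternative through the dictionary. If $\alpha_{\w}(F) \ge s$, there exists $\mathbb{U} \in \Gr(s,\mathsf{K}^n)$ with $F|_{\mathbb{U}^d} = 0$; every $d$-dimensional subspace of $\mathbb{U}$ is represented by a decomposable tensor on which $F$ vanishes, so $\Gr(d,\mathbb{U}) \subseteq X$. If instead $\beta_{\w}(F) \ge t$, there exists $\mathbb{W} \in \Gr(t,\mathsf{K}^n)$ with $\Ker(F|_{\mathbb{W}^d}) \cap \Lambda^d \mathbb{W} = \{0\}$; in particular no nonzero decomposable element of $\Lambda^d \mathbb{W}$ lies in $\Ker(F)$, which is exactly the assertion $X \cap \Gr(d,\mathbb{W}) = \emptyset$.

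This is a direct corollary of Theorem~\ref{prop:Ramsey}, and I do not anticipate any technical obstacle: the only subtle point is to note that the completeness index controls vanishing on \emph{all} alternating tensors, which is strictly stronger than vanishing on decomposable ones, so the implication $\beta_{\w}(F) \ge t \Rightarrow X \cap \Gr(d,\mathbb{W}) = \emptyset$ goes in the correct direction and no additional hypothesis (e.g., algebraic closure) is needed.
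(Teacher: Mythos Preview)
Your proposal is correct and is precisely the intended argument: the paper states this corollary without proof, remarking only that it is obtained ``in a similar vein'' to the preceding applications, and your dictionary between linear sections of $\Gr(d,\mathsf{K}^n)$ and alternating maps $F\in\Alt^d(\mathsf{K}^n,\mathsf{K}^m)$ followed by the defining dichotomy of $R_{\w}(\mathsf{K},d,s,t)$ is exactly that argument. One trivial remark: the dichotomy $\alpha_{\w}(F)\ge s$ or $\beta_{\w}(F)\ge t$ comes directly from Definition~\ref{def:Ramsey} (once existence is granted by Theorem~\ref{prop:Ramsey}), so you are really invoking the definition together with the existence result rather than the quantitative bound in item~\ref{prop:Ramsey:item2}.
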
  

Suppose $\mathsf{K}$ is algebraically closed.  Given $F \in \Sym^d(\mathsf{K}^{N+1},  \mathsf{K}^n)$,  Theorem~\ref{lem-complete1}--\ref{lem-complete1:sym} shows that $\beta_{\s}(F) \le \height(\mathfrak{a}_F)$.  This together with Theorem~\ref{prop:Ramsey} yields the following Ramsey-type result.  
\begin{corollary}[Ramsey-type result for projective varieties]\label{cor:var}
Let $\mathsf{K}$ be an algebraically closed field and let $d,s,t$ be positive integers.  Suppose $X$ is a subvariety of $\mathbb{P}^N$ defined by homogeneous polynomials of degree $d$ over $\mathsf{K}$.  If $N \ge R_{\s}(\mathsf{K},d,s+1,t)  \asymp_d s^{d-1} t^d$,  then $X$ either contains an $s$-dimensional subspace,  or $\codim X \ge t$.  In particular,  if $N \ge R_{\s}(\mathsf{K},d,s+1,2) \asymp_d s^{d-1}$,  then every degree-$d$ hypersurface in $\mathbb{P}^N$ must contain an $s$-dimensional linear subspace.   
\end{corollary}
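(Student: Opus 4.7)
The plan is to encode $X$ as a symmetric multilinear map and then apply the Ramsey dichotomy from Theorem~\ref{prop:Ramsey}--\ref{prop:Ramsey:item3} together with the height bound in Theorem~\ref{lem-complete1}--\ref{lem-complete1:sym}. Write $X = V(f_1,\dots,f_n)$ with $f_i \in R_d$ and $R \coloneqq \mathsf{K}[x_0,\dots,x_N]$. Assuming $\ch(\mathsf{K})=0$ or $\ch(\mathsf{K}) > d$ (implicit in working with the symmetric theory of Proposition~\ref{lem:poly=sym}), I would let $F \in \Sym^d(\mathbb{V},\mathbb{W})$ be the symmetric multilinear map corresponding via polarization \eqref{eq:polar} to $\mathbb{L}_F \coloneqq \spa\{f_1,\dots,f_n\}$, where $\mathbb{V} \coloneqq \mathsf{K}^{N+1}$ and $\mathbb{W} \coloneqq \mathbb{L}_F$. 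By construction $\mathfrak{a}_F = (f_1,\dots,f_n)$, and from the dimension theory of the affine cone of $X$, the height of this ideal equals $\codim_{\mathbb{P}^N} X$.

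Next, from $\dim \mathbb{V} = N+1 \ge R_{\s}(\mathsf{K},d,s+1,t)$, the definition of the Ramsey number forces either $\alpha_{\s}(F) \ge s+1$ or $\beta_{\s}(F) \ge t$. In the former case, Proposition~\ref{lem:poly=sym} supplies an $(s+1)$-dimensional subspace $\mathbb{U} \subseteq \mathbb{V}$ on which every $f_i$ vanishes, so $\mathbb{P}(\mathbb{U}) \subseteq X$ is an $s$-dimensional projective linear subspace. In the latter case, the algebraic closure of $\mathsf{K}$ gives $|\mathsf{K}| = \infty > \height(\mathfrak{a}_F)$, so Theorem~\ref{lem-complete1}--\ref{lem-complete1:sym} yields $t \le \beta_{\s}(F) \le \height(\mathfrak{a}_F) = \codim X$, completing the dichotomy. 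The asymptotic $R_{\s}(\mathsf{K},d,s+1,t) \lesssim_d s^{d-1}t^d$ is immediate from Theorem~\ref{prop:Ramsey}--\ref{prop:Ramsey:item3}.

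For the ``in particular'' part, I would specialize to $t = 2$. Any hypersurface has codimension one, so the second alternative is impossible and the first produces an $s$-dimensional linear subspace in $X$. The stated numerical bound follows from $R_{\s}(\mathsf{K},d,s+1,2) \lesssim_d (s+1)^{d-1}\cdot 2^{d} \asymp_d s^{d-1}$.

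I do not anticipate a serious obstacle: the argument is essentially a translation through the dictionary of Proposition~\ref{lem:poly=sym} followed by direct invocation of the main theorems. The only technical points that need verification are the identity $\height(\mathfrak{a}_F) = \codim_{\mathbb{P}^N} X$ (standard from Krull dimension theory of graded rings), and a careful matching of the projective dimension of $\mathbb{P}(\mathbb{U})$ with the linear dimension of $\mathbb{U}$, which accounts for the shift from $s$ to $s+1$ in the parameters of $R_{\s}$.
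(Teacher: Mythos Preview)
Your proposal is correct and follows exactly the approach the paper sketches in the sentence preceding the corollary: encode $X$ by the symmetric multilinear map $F$ via polarization, invoke the Ramsey dichotomy for $\alpha_{\s}$ and $\beta_{\s}$, and then translate $\alpha_{\s}(F)\ge s+1$ into a linear subspace via Proposition~\ref{lem:poly=sym} and $\beta_{\s}(F)\ge t$ into $\codim X\ge t$ via the inequality $\beta_{\s}(F)\le\height(\mathfrak{a}_F)$ of Theorem~\ref{lem-complete1}--\ref{lem-complete1:sym}. Your write-up is in fact more detailed than the paper's one-line justification, and your flagging of the implicit characteristic hypothesis (needed for polarization and Theorem~\ref{lem-complete1}--\ref{lem-complete1:sym}) is a point the paper leaves unstated.
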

\begin{remark}
Corollary~\ref{cor:var} asserts that varieties of small codimension must contain a large linear subspace,  consistent with the guiding principle \cite{Landsberg94,Hartshorne74} that smooth subvarieties of small codimension should behave like hypersurfaces.  

Given a variety $X \subseteq \mathbb{P}^N$ and a positive integer $k$,  we denote by $F_k(X) \subseteq \Gr(k+1,\mathsf{K}^{N+1})$ its $k$-th Fano variety,  consisting of all $k$-dimensional linear subspaces contained in $X$.  Fano varieties of smooth hypersurfaces have been long investigated by algebraic and differential geometers \cite{Landsberg94,HMP98,Beheshti05,Beheshti06,LR10,Debarre17,BR21}.  By \cite[Theorem~1.3]{BR21},  if $N \ge 2\binom{s + d  - 1}{d-1} + s-1 \asymp_d s^{d-1}$ and $X \subseteq \mathbb{P}^n$ is a smooth hypersurface,  then $F_s(X)$ is of the expected dimension $(s+1)(N-s) - \binom{s+d}{d} > 0$.  In particular,  this verifies Corollary~\ref{cor:var} for smooth hypersurfaces.  The fact that Fano varieties are of expected dimension for a generic variety \cite[Theorem~2.1]{DM98} and some low-degree smooth complete intersections \cite[Theorem~1.3]{Canning21} further confirms Corollary~\ref{cor:var}.  However,  the dichotomous structures observed in Corollary~\ref{cor:var},  as far as we are aware,  have not been discussed in full generality. 
\end{remark}

\subsection{Probabilistic algorithms}The primary goal of this subsection is to show that the completeness index can be computed in polynomial time by a probabilistic algorithm .  We also briefly discuss its immediate applications to computation of tensor ranks and height of ideals.  The following proposition is essentially due to Youming Qiao,  who asserted its validity for $\beta_{\w}$ with $d = 2$ in \cite[Section 5.8]{qiao2020tur} and kindly explained the idea to us in a private communication. 
\begin{proposition}[Probabilistic algorithm for complete index]\label{random compute beta}
For any $\varepsilon>0$,  there is a probabilistic algorithm  with error probability at most $\varepsilon$ that computes $\beta(F)$ (resp.  $\beta_{\w}(F)$,  $\beta_{\s}(F)$) for $F \in \Hom(\mathsf{F}_q^{n_1} \times \cdots \times \mathsf{F}_q^{n_d},  \mathsf{F}_q^n)$ (resp.  $F \in \Alt^d(\mathsf{F}_q^{n_1},  \mathsf{F}_q^n)$,  $F \in \Sym^d(\mathsf{F}_q^{n_1},  \mathsf{F}_q^n)$) within $\poly(d,N,-\log \varepsilon)$ field operations,  provided $q > 2dN$.  Here $N \coloneqq n_{1}+\cdots+n_d+n$ (resp.  $N \coloneqq d n_1 + n$,  $N \coloneqq d n_1 + n$).
\end{proposition}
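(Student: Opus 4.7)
The plan is a Monte Carlo algorithm: for each candidate value of $s$, repeatedly sample random ``test maps'' $G_i \in \mathsf{F}_q^{n_i \times s}$ (a single $G \in \mathsf{F}_q^{n_1 \times s}$ in the alternating and symmetric cases), form the matrix $M$ representing the induced linear map from $(\mathsf{F}_q^s)^{\otimes d}$ (resp.\ $\Lambda^d \mathsf{F}_q^s$, $\mathrm{S}^d \mathsf{F}_q^s$) to $\mathsf{F}_q^n$, and accept iff $M$ has full column rank; then output the largest $s$ that is accepted in at least one of $K$ independent trials. Rank testing is done by Gaussian elimination, so one trial costs $\poly(N)$ field operations. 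By Lemma~\ref{lem:basic}--\ref{lem:basic:item2},\ref{lem:basic:item2'} the relevant indices are bounded above by $\lfloor n^{1/d}\rfloor$, by the largest $s$ with $\binom{s}{d}\le n$, and by the largest $s$ with $\binom{s+d-1}{d}\le n$ respectively, so only $\poly(N)$ levels need be tried and all matrix dimensions are bounded by $N$.

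\textbf{Correctness via Schwartz--Zippel.} Acceptance at level $s$ is a sound certificate: full column rank of $M$ forces $G_1\otimes\cdots\otimes G_d$ (resp.\ $\Lambda^d G$, $\Sym^d G$) to be injective, hence each $G_i$ (resp.\ $G$) is injective with $s$-dimensional image $\mathbb{U}_i$, and the restriction of $F$ to $\mathbb{U}_1\otimes\cdots\otimes\mathbb{U}_d$ (resp.\ $\Lambda^d\mathbb{U}$, $\mathrm{S}^d\mathbb{U}$) is injective by construction, which is precisely the definition of a complete subspace of dimension $s$. Conversely, if the corresponding index is at least $s$, then an optimal realization $(G_1^*,\ldots,G_d^*)$ (resp.\ $G^*$) makes some chosen maximal minor $P$ of $M$ nonzero as a polynomial in the entries of the sampled matrices. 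The entries of $M$ are multilinear of multidegree $(1,\ldots,1)$ in $(G_1,\ldots,G_d)$ in the general case, and homogeneous of degree $d$ in the entries of $G$ in the alternating and symmetric cases; consequently $P$ has total degree at most $d s^d$, $d\binom{s}{d}$, or $d\binom{s+d-1}{d}$ respectively, each bounded by $dN$ in the valid range of $s$. Lemma~\ref{Schwartz-Zippel lemma} then bounds the per-trial failure probability by $dN/q<1/2$ under the hypothesis $q>2dN$, so $K=O(\log(N/\varepsilon))$ independent trials followed by a union bound over the $\poly(N)$ levels give total error at most $\varepsilon$, and the overall running time is $\poly(d,N,-\log\varepsilon)$.

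\textbf{Main obstacle.} The principal technical point is extracting, in each of the three cases, an explicit polynomial identity (a maximal minor of $M$) whose nonvanishing witnesses ``$M$ has full column rank'' and whose degree in the entries of the sampled matrices is bounded by $dN$; once this is set up, Schwartz--Zippel and the union bound finish the argument mechanically. A mild additional subtlety arises in the symmetric setting in small characteristic, where $\dim\mathrm{S}^d\mathsf{F}_q^s$ can collapse below $\binom{s+d-1}{d}$; this is handled by testing the rank of $M$ against $\dim\mathrm{S}^d\mathsf{F}_q^s$ (computable explicitly from $s$, $d$ and $\ch(\mathsf{F}_q)$) rather than against the nominal $\binom{s+d-1}{d}$, while the same degree bound on $P$ continues to hold, so the Schwartz--Zippel estimate goes through unchanged.
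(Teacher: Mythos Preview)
Your proposal is correct and takes essentially the same approach as the paper: both sample random tuples of vectors (equivalently, random linear maps $G_i$), form the evaluation matrix of $F$ on all $s^d$ (resp.\ $\binom{s}{d}$, $\binom{s+d-1}{d}$) products, test its rank, and invoke Lemma~\ref{Schwartz-Zippel lemma} on a maximal minor of degree at most $dN$ to bound the one-sided error by $1/2$ under the hypothesis $q>2dN$; repetition and a union bound over the $\poly(N)$ candidate levels finish the argument. Your additional remark about $\dim\mathrm{S}^d\mathsf{F}_q^s$ in small characteristic is unnecessary here, since the paper defines $\mathrm{S}^d\mathbb{V}$ as the subspace of $\mathfrak{S}_d$-invariant tensors, whose dimension is $\binom{s+d-1}{d}$ in every characteristic.
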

\begin{proof}
We confine ourselves to the probabilistic algorithm for general multilinear maps,  since the algorithms for alternating and symmetric cases are similar.  We denote $m \coloneqq \min \{n_1,\dots,  n_d,  n^{1/d}\}$.  By definition,  we have $\beta(F)\le m$.  We claim that for each $c\in [m]$,  there is a probabilistic algorithm that decides whether $\beta(F) \le c$ with error probability at most $\varepsilon/m$,  using $\poly(d,N,-\log \varepsilon )$ field operations.  Assuming the claim,  we can determine $\beta(F)$ with error probability at most $1-(1-\varepsilon/m)^{m}\le \varepsilon$,  using $\poly(d,N,-\log \varepsilon )$ field operations.

Thus,  it is left to prove the claim.  Let $\mathcal{L}_{c+1}$ be the set defined as in \eqref{lem-complete1:gen:eq-1}.  We choose an ordering on $\mathcal{L}_{c+1}$ so that its elements $\mathbf{L}_1,\dots,  \mathbf{L}_{L_{c+1}}$ are listed as in \eqref{lem-complete1:gen:eq0}.  Here $L_{c+1} \coloneqq (c+1)^d$.  Suppose $R$ is the polynomial ring over $\mathsf{F}_q$ with variables $w_{i,j}$ where $(i,j) \in [d] \times [L_{c+1}]$.  We consider the polynomial matrix $M = (M_{s,t})_{s,t=1}^{L_{c+1},n} \in R^{L_{c+1} \times n}$ with 
\[
M_{s,t} \coloneqq F_t(w_{1,\mathbf{L}_s(1)}, \dots,  w_{d,  \mathbf{L}_s(d)} ),\quad (s,t) \in [L_{c+1}] \times [n].  
\]
Note that $\beta (F) < c+1$ if and only $\rank M(v) < L_{c+1}$ for any $v \in \mathsf{F}_q^{n_1} \times \cdots \times \mathsf{F}_q^{n_d}$.  Since the maximal minors of $M$ are of degree at most $d L_{c+1} \le d N < q$,  Lemma~\ref{Schwartz-Zippel lemma} implies 
\[
\Pr \left( \rank M(v) < L_{c+1} \right) \le \frac{d L_{c+1}}{q} \le 1/2. 
\]
For independently and randomly sampled $k\coloneqq \lceil -2 \log(\varepsilon/m) \rceil$ points $v_1,\dots,  v_k \in \mathsf{F}_q^{n_1} \times \cdots \times \mathsf{F}_q^{n_d}$,  we have 
\begin{equation}\label{random compute beta:eq1}
\prod_{i=1}^k \Pr \left( \rank M(v_i) < L_{c+1} \right) \le 4^{\log \frac{\varepsilon}{m}} \le \frac{\varepsilon}{m}.
\end{equation}
The probabilistic algorithm is as follows.  If $\rank M(v_i) =L_{c+1}$ for some $i \in [k]$, it returns ``$\beta (T) \ge c+1$".  Otherwise,   it returns "$\beta(F)\le c$".  According to \eqref{random compute beta:eq1},  the error probability of this algorithm is at most $\varepsilon/m$.  Moreover,  we recall that evaluating $M$ at a point $v \in \mathsf{F}_q^{n_1} \times \cdots \times \mathsf{F}_q^{n_d}$ costs $O(N^d)$ field operations,  and checking the rank of $M(v) \in \mathsf{F}_q^{L_{c+1} \times n}$ costs $O(n^3)$ field operations.  Thus,  the probabilistic algorithm requires $O(k( N^{d}+n^{3}))= \poly(d,N,-\log \varepsilon )$ field operations.
\end{proof}
\begin{remark}\label{rmk:random compute beta}
If $|\mathsf{K}| = \infty$ and $\mathbb{V} \coloneqq \mathsf{K}^{n_1} \times \cdots \times \mathsf{K}^{n_d}$ is equipped with a probability distribution such that any subvariety $X \subsetneq \mathbb{V}$ has measure zero,  then one can decide whether $\beta(F) \le c$ using the algorithm in the proof of Proposition~\ref{random compute beta}.  In this case,  it is sufficient to check whether $\rank M(v) = L_{c + 1}$ for a single random point $v \in \mathbb{V}$.  The error probability is clearly zero and the cost of field operations is $O( (n_1+ \cdots + n_d)^d)$ if $d \ge 3$.
\end{remark}
For each $\varepsilon > 0$,  we combine Propositions~\ref{random compute beta} and \ref{main theorem 2} to obtain a probabilistic algorithm with error probability at most $\varepsilon$ that computes a number $h$ for $T \in \mathsf{F}_q^{n_1} \otimes \cdots \otimes \mathsf{F}_q^{n_k}$ such that 
\begin{equation}\label{eq:prob}
h \le \Q(T) \le \underline{\Q}(T) \le  \GR(T) \le \frac{\AR(T)}{1 - \log_q k} <  \frac{k-1 + (h+1)^{k-1} + \left\lceil \frac{(h+1)^{k-1}(k-2)}{q-1} \right\rceil}{1 - \log_q k},
\end{equation}
within $\poly(q,-\log \varepsilon )$ field operations,  provided $(k-1)N < q$.  Here $N \coloneqq n_1 + \cdots + n_k$ and the preultimate inequality follows from \cite[Proof of Theorem~1.13--(1)]{adiprasito2021schmidt}.  Since geometric rank is invariant under field extension,  we may remove the constraint $(k-1)N < q$ for it.
\begin{corollary}[Probabilistic algorithm for bounds of geometric rank]\label{cor:probgr}
For any $\varepsilon > 0$,  there is a probabilistic algorithm with error probability at most $\varepsilon$ that computes a number $h$ for any $T \in \mathsf{F}_q^{n_1} \otimes \cdots \otimes \mathsf{F}_q^{n_k}$ such that
\[
h \le \GR(
T)< (1 - \log_q k)^{-1} \left(k-1 + (h+1)^{k-1} + \left\lceil \frac{(h+1)^{k-1}(k-2)}{q-1} \right\rceil \right),
\]
within $\poly(N,k,-\log \varepsilon)$ field operations where $N \coloneqq n_1 + \cdots + n_k$.
\end{corollary}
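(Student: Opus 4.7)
The plan is to identify $T$ with a multilinear map, invoke Proposition~\ref{random compute beta} to compute its completeness index, and then use Proposition~\ref{main theorem 2} together with the bound $\GR \le \AR/(1-\log_q k)$ from \cite[Proof of Theorem~1.13--(1)]{adiprasito2021schmidt} to sandwich $\GR(T)$ around that value. Concretely, via the isomorphism~\eqref{eq:isom}, I would first identify $T \in \mathsf{F}_q^{n_1} \otimes \cdots \otimes \mathsf{F}_q^{n_k}$ with the $(k-1)$-multilinear map $F \in \Hom(\mathsf{F}_q^{n_1} \times \cdots \times \mathsf{F}_q^{n_{k-1}}, \mathsf{F}_q^{n_k})$ for which $T_F = T$.

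Since Proposition~\ref{random compute beta} requires the ground field to contain more than $2(k-1)N$ elements, the next step is to work over $\mathsf{F}_{q^m}$, where $m$ is the smallest positive integer with $q^m > 2(k-1)N$; note that $m = O(\log_q(kN) + 1)$. Arithmetic in $\mathsf{F}_{q^m}$ costs $\poly(m)$ operations over $\mathsf{F}_q$, so the overall cost stays at $\poly(k, N, -\log \varepsilon)$ $\mathsf{F}_q$-operations. I then apply Proposition~\ref{random compute beta} to the scalar-extended map $F^{\mathsf{F}_{q^m}}$ with error parameter $\varepsilon$, obtain $h \coloneqq \beta(F^{\mathsf{F}_{q^m}})$, and output this $h$.

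For the lower bound, Proposition~\ref{main theorem 2} applied over $\mathsf{F}_{q^m}$ gives $h \le \Q(F^{\mathsf{F}_{q^m}}) \le \GR(F^{\mathsf{F}_{q^m}}) = \GR(T)$, where the last equality uses the field-extension invariance of geometric rank. For the upper bound, Proposition~\ref{main theorem 2} also yields $\AR(T^{\mathsf{F}_{q^m}}) < k - 1 + (h+1)^{k-1} + \lceil (h+1)^{k-1}(k-2)/(q^m - 1) \rceil$; combining this with $\GR(T^{\mathsf{F}_{q^m}}) \le \AR(T^{\mathsf{F}_{q^m}})/(1 - \log_{q^m} k)$ and observing that $q^m \ge q$ implies both $(1 - \log_{q^m} k)^{-1} \le (1 - \log_q k)^{-1}$ and $(q^m - 1)^{-1} \le (q-1)^{-1}$ delivers the upper bound stated in the corollary.

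The main obstacle I anticipate is the bookkeeping around the field extension: namely, verifying that (a) the completeness index computed over the larger field $\mathsf{F}_{q^m}$ still lower-bounds $\GR(T)$---which goes through because the chain $\beta \le \Q \le \GR$ holds over any field while $\GR$ is field-invariant---and (b) the resulting upper bound over $\mathsf{F}_{q^m}$ in fact implies the (weaker) one over $\mathsf{F}_q$ claimed in the statement, which follows from the elementary monotonicity observed above. Neither point is deep, but each requires being explicit about which inequality is applied over which field.
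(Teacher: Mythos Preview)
Your proposal is correct and follows essentially the same route as the paper: pass to a sufficiently large extension $\mathsf{F}_{q^m}$ so that Proposition~\ref{random compute beta} applies, compute $h=\beta(F^{\mathsf{F}_{q^m}})$, and combine Proposition~\ref{main theorem 2} with the bound $\GR\le \AR/(1-\log_q k)$ and the field-extension invariance of $\GR$. The paper's proof is terser and does not spell out the monotonicity step you mention in (b); your observation that $(1-\log_{q^m}k)^{-1}\le(1-\log_q k)^{-1}$ and $(q^m-1)^{-1}\le(q-1)^{-1}$ is exactly what is needed to recover the stated bound with $q$ rather than $q^m$, so you have in fact made explicit a point the paper leaves implicit.
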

\begin{proof}
Let $l$ be the smallest positive integer such that $q^l> (k-1) N$.  We recall from \cite[Chapters~2 and 3]{burgisser2013algebraic} that any arithmetic operation in $\mathsf{F}_{q^{l}}$ can be performed using  $\widetilde{O}(\log_2 ((k-1)N)$ operations in $\mathsf{F}_{q}$.  The existence of the algorithm follows immediately from \eqref{eq:prob} over $\mathsf{F}_{q^l}$ and the invariance of geometric rank under field extension.
\end{proof}
\begin{remark} 
According to \cite{Koiran97},  the problem of computing the geometric rank lies in the classes $\PSPACE$ and $\AM$ under certain assumptions; see also \cite[Section~2]{kopparty2020geometric}.  However,  the computational complexity-and indeed the hardness-of computing the geometric rank remains open.  Corollary~\ref{cor:probgr} is the first step toward this open problem,  although it falls far short of solving it.
\end{remark}

Since the height of a polynomial ideal is also invariant under field extension \cite[Lemma~3.4]{chen2025bounds},  the same argument used for Corollary~\ref{cor:probgr},  along with Theorem~\ref{lem-complete1},  Proposition~\ref{random compute beta} and Remark~\ref{rmk:random compute beta},  yields the following.
\begin{corollary}[Probabilistic algorithm for bounds of height]\label{cor:probht}
For any $\varepsilon > 0$,  there is a probabilistic algorithm with error probability at most $\varepsilon$ that computes a number $h$ for any homogeneous polynomials $F_1,\dots,  F_m \in  \mathsf{F}_q[x_1,\dots,  x_n]$ of degree $d$ such that $h \le \height (\mathfrak{a}) < \binom{h + d}{d}$ within $\poly(dn,-\log \varepsilon)$,  where $\mathfrak{a} = (F_1,\dots,  F_m)$.  Moreover,  if $\mathsf{K}$ is infinite and $\mathsf{K}^n$ is equipped with a probability distribution such that any subvariety $X \subsetneq \mathsf{K}^n$ has measure zero,  then there is a probabilistic algorithm that computes a number $h$ for any homogeneous polynomials $F_1,\dots,  F_m \in  \mathsf{K}[x_1,\dots,  x_n]$ of degree $d$ such that $h \le \height (\mathfrak{a}) < \binom{h + d}{d}$ within $O( (dn + m)^d)$ field operations.  
\end{corollary}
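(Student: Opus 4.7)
The plan is to reduce the computation of bounds for $\height(\mathfrak{a})$ to that of computing the completeness index $\beta_{\s}(F)$ of the associated symmetric multilinear map, mirroring the strategy of Corollary~\ref{cor:probgr}. Given $F_1,\dots,F_m$ in degree $d$ of $R = \mathsf{K}[x_1,\dots,x_n]$, their span $\mathbb{L}$ is $\mathbb{L}_F$ for some $F \in \Sym^d(\mathsf{K}^n,\mathsf{K}^{m'})$ via Proposition~\ref{lem:poly=sym}, where $m' = \dim \mathbb{L}$. Theorem~\ref{lem-complete1}\ref{lem-complete1:sym} gives $\beta_{\s}(F) \le \height(\mathfrak{a}) < \binom{\beta_{\s}(F)+d}{d}$, so setting $h := \beta_{\s}(F)$ yields the required double inequality. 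The algorithmic task thus reduces to computing $\beta_{\s}(F)$ via Proposition~\ref{random compute beta}.

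For the finite-field case, I would first preprocess by replacing $F_1,\dots,F_m$ with a basis of $\mathbb{L}$, so $m' \le \binom{n+d-1}{d} = \poly(dn)$. I then pass to the smallest extension $\mathsf{F}_{q^l}$ satisfying both $q^l > 2d(dn+m')$ (the threshold in Proposition~\ref{random compute beta}) and $q^l > \binom{n+d}{d}$ (a crude universal upper bound on $\height(\mathfrak{a})$ guaranteeing the hypothesis $|\mathsf{K}|>\height(\mathfrak{a}_F)$ of Theorem~\ref{lem-complete1}\ref{lem-complete1:sym}); this is legitimate because height is invariant under field extension by \cite[Lemma~3.4]{chen2025bounds}. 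Each $\mathsf{F}_{q^l}$-operation costs $\widetilde{O}(l) = \widetilde{O}(d\log n)$ operations in $\mathsf{F}_q$. Proposition~\ref{random compute beta} then returns $h$ with error probability at most $\varepsilon$ using $\poly(d,dn+m',-\log\varepsilon) = \poly(dn,-\log\varepsilon)$ operations over $\mathsf{F}_q$, and Theorem~\ref{lem-complete1}\ref{lem-complete1:sym} supplies the final bound.

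For the infinite-field case no extension is needed: by Remark~\ref{rmk:random compute beta}, a single random evaluation of the matrix $M$ built from the $F_i$ decides $\beta_{\s}(F) \le c$ with probability one under the assumed measure, since the locus of bad points is a proper subvariety. Iterating over $c \in \{0,\dots,n\}$ and invoking Theorem~\ref{lem-complete1}\ref{lem-complete1:sym} delivers the bound within $O((dn+m)^d)$ field operations.

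The main anticipated obstacle is the small-characteristic regime $\ch(\mathsf{K}) \le d$, where polarization is non-bijective and Theorem~\ref{lem-complete1}\ref{lem-complete1:sym} cannot be cited verbatim. My bypass is to work throughout with the polynomial-subspace index $\beta_{\p}(\mathbb{L})$ of Proposition~\ref{lem:poly=sym}, which is characteristic-free. Inspection of the proof of Theorem~\ref{lem-complete1}\ref{lem-complete1:sym} reveals its crucial ingredient to be the non-vanishing maximal-minor lemma (Proposition~\ref{coro1}), which is itself characteristic-insensitive, so the double bound $\beta_{\p}(\mathbb{L}) \le \height(\mathfrak{a}) < \binom{\beta_{\p}(\mathbb{L})+d}{d}$ holds unconditionally. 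The probabilistic algorithm of Proposition~\ref{random compute beta} adapts analogously by using the $F_i$ directly rather than their polar forms, with the Schwartz-Zippel analysis unchanged.
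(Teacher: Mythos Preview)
Your proposal is correct and follows essentially the same route as the paper: both reduce to computing the symmetric completeness index via Theorem~\ref{lem-complete1}\ref{lem-complete1:sym}, invoke Proposition~\ref{random compute beta} (with the field-extension trick and height invariance from \cite[Lemma~3.4]{chen2025bounds} for the finite-field case), and appeal to Remark~\ref{rmk:random compute beta} for the infinite-field case. You are in fact more careful than the paper's one-sentence proof in flagging and addressing the small-characteristic obstruction via $\beta_{\p}$, which the paper leaves implicit.
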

\begin{remark}
According to \cite[Proposition~1.1]{Koiran97},  the problem of deciding whether $\height \mathfrak{a} \le n-D$ is NP-hard for any $D \le n$.  Here $\mathfrak{a}$ is an ideal in $\mathbb{C}[x_1,\dots,  x_n]$ generated by polynomials of degree at most $d$.  Existing deterministic algorithms for height run in time polynomial in either $d^n$ \cite[Theorem~1.1]{Chistov96} or $D^n$ \cite{Koiran97}.   In contrast,  the probabilistic algorithm in Corollary~\ref{cor:probht} has complexity polynomial in $n^d$. 

It is worth mentioning that Corollary~\ref{cor:probht} provides a computable degree bound for Gr\"{o}bner bases.  By \cite[Theorem~3]{MR13},  the degree of the reduced Gr\"{o}bner basis $G$ of the ideal $\mathfrak{a} \subsetneq \mathsf{K}[x_1,\dots,  x_n]$ generated by homogeneous polynomials $F_1,\dots, F_m$ of degree $d$ is bounded by 
\begin{equation}\label{eq:Grob1}
\deg G \le 2 \left( \frac{d^{t} + d}{2} \right)^{2^{n- t - 1}},\quad t \coloneqq \height (\mathfrak{a}).
\end{equation}
This upper bound is not practical since computing $t$ is difficult,  as mentioned above.  However,  \eqref{eq:Grob1} together with Corollary~\ref{cor:probht} gives
\begin{equation}\label{eq:Grob2}
\deg G \le 2 \left( \frac{d^{\binom{h + d}{d}} + d}{2} \right)^{2^{n- h - 1}},
\end{equation}
where $h$ is a number probabilistically computable in time $O( (dn + m)^d )$.  We also notice that the bound in \eqref{eq:Grob2} is better than the one in \cite[Theorem~8.2]{Dube90} whenever $2^h > \binom{h+d}{d}$.
\end{remark}


\subsection*{Acknowledgment}
We thank Hong Liu and Zixiang Xu for introducing the work of Youming Qiao to us.  We also thank Youming Qiao for helpful discussions.
\bibliographystyle{abbrv}
\bibliography{ref}
\appendix
\end{document}